\documentclass{amsart}
\usepackage{graphicx} 

\usepackage[margin=1in]{geometry}
\usepackage{ytableau}
\usepackage{amsmath}
\usepackage{amsfonts}
\usepackage{amssymb}
\usepackage{amsthm}
\usepackage{youngtab}
\usepackage{tikz-cd}
\usepackage{enumitem}
\usepackage{hyperref}
\usepackage{comment}

\renewcommand{\emph}{\textbf}

\newtheorem{theorem}{Theorem}[section]
\newtheorem{proposition}[theorem]{Proposition}
\newtheorem{corollary}[theorem]{Corollary}
\newtheorem{lemma}[theorem]{Lemma}
\newtheorem{conjecture}[theorem]{Conjecture}

\newtheorem{example}[theorem]{Example}
\newtheorem{remark}[theorem]{Remark}

\theoremstyle{definition}

\newtheorem{definition}[theorem]{Definition}

\newtheorem{alg}[theorem]{Algorithm}

\newcommand{\SYT}{\mathrm{SYT}}
\newcommand{\SSYT}{\mathrm{SSYT}}

\newcommand{\cc}{\mathrm{cc}}
\newcommand{\rw}{\mathrm{rw}}

\newcommand{\sh}{\mathrm{sh}}
\newcommand{\std}{\mathrm{std}}

\newcommand{\sgn}{\mathrm{sgn}}
\newcommand{\ctype}{\mathrm{ctype}}
\newcommand{\ccTab}{\mathrm{ccTab}}
\newcommand{\xx}{\mathbf{x}}

\newcommand{\Tab}{\mathrm{Tab}}

\newcommand{\fS}{{Z}}

\newcommand{\Snake}{\mathrm{Snake}}

\newcommand{\BBB}{\mathcal{B}}

\newcommand{\dom}{\unrhd}

\title[Higher Specht Polynomials and Garsia-Procesi Modules]{Nonvanishing higher Specht polynomials and a construction for three row and hook shape Garsia--Procesi modules}

\title[Higher Specht Polynomials and Garsia-Procesi Modules]{Nonvanishing higher Specht polynomials and a construction for three row and hook shape Garsia--Procesi modules}
\author{Raymond Chou}
\address{Department of Mathematics, UC San Diego, San Diego, CA 92093}
\email{r2chou@ucsd.edu}
\author{Maria Gillespie} 
\address{Department of Mathematics, Colorado State University, Fort Collins, CO 80523, USA}
\email{maria.gillespie@colostate.edu}
\thanks{The second author was partially supported by the Simons Foundation.}
\author{Mitsuki Hanada}
\address{Department of Mathematics, University of Pennsylvania, Philadelphia, PA, 19104}
\email{mitsuki@sas.upenn.edu}

\date{\today}

\begin{document}

\begin{abstract}
    Given a polynomial ring quotient $R=\mathbb{C}[x_1,\ldots,x_n]/I$ with an action of the symmetric group induced by permuting the variables, a higher Specht basis is a collection of bases for each irreducible $\mathfrak{S}_n$-module in its decomposition that mimics the behavior of the classical Specht polynomial construction in the lowest degrees. Higher Specht bases have now been constructed for the coinvariant ring, the full polynomial ring, the rings $R_{n,k}$ appearing in the $t=0$ Delta conjecture, the hook shape Garsia-Haiman modules, and the two-row Garsia-Procesi modules, and more.

    We establish a general theory for determining when a higher Specht polynomial is nonzero, and give a proof of a conjectural higher Specht basis for all Garsia-Procesi modules in the cases of three row shapes and hook shapes.
\end{abstract}

\maketitle

\section{Introduction and Main Results}

The coinvariant ring and its generalizations are central to symmetric function theory, the geometry of flag varieties, and the representation theory of the symmetric group.  Defining the elementary symmetric polynomials $e_d(x_1,\ldots,x_n)=\sum_{1\le i_1<\cdots<i_d\le n} x_{i_1}\cdots x_{i_d}$,
the coinvariant ring in $n$ variables is $$R_n=\mathbb{C}[x_1,\ldots,x_n]/(e_1,\ldots,e_n)$$ and it comes with the natural action of the symmetric group $\mathfrak{S}_n$ by permuting the variables.  Under this action, $R_n$ behaves as a graded version of the regular representation of the symmetric group.  It also is a fundamental object in Schubert calculus, as it is isomorphic to the cohomology ring of the complex complete flag variety in $n$ dimensions.

While there are many natural bases of $n!$ elements known for $R_n$, including the Schubert polynomials \cite{Fulton}, the Artin monomial basis \cite{Artin}, and the descent monomial basis, \cite{GarsiaStanton}, none of these bases respected the decomposition of $R_n$ as an $\mathfrak{S}_n$-module into irreducible representations.  As a solution, Ariki, Terasoma, and Yamada \cite{ATY} defined a \textbf{higher Specht basis} of $R_n$, which we define for modules in general as follows, using the definition established in \cite{HigherSpechtDiagonal}:

\begin{definition}
    A \textbf{higher Specht basis} for a (graded) $\mathfrak{S}_n$ module $M$ is a basis $\mathbb{B}$ that admits a set partition $\bigsqcup \mathbb{B}_{\lambda,i}$ where $\lambda$ and $i$ represent a partition and positive integer respectively, such that:
    \begin{enumerate}
        \item Each $\mathbb{B}_{\lambda,i}$ spans a copy of an irreducible representation $V_\lambda$ in the decomposition of $M$ into irreducibles (there may be several such copies, so we distinguish with the subscript $i$),
        \item There is a bijection from $\mathbb{B}_{\lambda,i}$ to the set of ordinary (classical) Specht polynomials $g_T$ for shape $\lambda$, that preserves the $\mathfrak{S}_n$ action with respect to each basis.
    \end{enumerate} 
\end{definition}

Here, the classical Specht polynomials $g_T$ may be defined as follows.

\begin{definition}\label{def: specht}
  The \textbf{Specht polynomial} $g_T$ corresponding to a Young tableau $T$ whose entries are $1,2,3,\ldots,n$ is given by $$g_T=\prod_{c\in \mathrm{col}(T)}\prod_{\substack{i,j\in c\\ i\text{ above }j}}(x_i-x_j)$$ where $\mathrm{col}(T)$ is the set of columns of $T$. (See Figure \ref{fig:Specht}.)
\end{definition} 

\begin{figure}[b]
    \centering
   $T=\raisebox{-0.5cm}{\young(3,716,2845)}$\hspace{2cm} $g_T=(x_3-x_7)(x_7-x_2)(x_3-x_2)\cdot (x_1-x_8)\cdot (x_6-x_4)$
    \caption{A tableau $T\in \Tab(4,3,1)$ in French notation, and the Specht polynomial $g_T$.}
    \label{fig:Specht}
\end{figure}

Writing $\Tab(\lambda)$ for the set of all fillings of the boxes of the Young diagram of $\lambda$ with $1,2,\ldots,n$ in some order, one can then define the Specht module 
as 
\begin{align}\label{eq: span of spechts}
    M_\lambda=\mathrm{span}\{g_T: T\in \Tab(\lambda)\}\subseteq \mathbb{C}[x_1,\ldots,x_n],
\end{align} and we have $M_\lambda\cong V_\lambda$ with a basis given by the $g_T$ such that $T$ is a \textbf{standard Young tableau (SYT)}, in which the entries are increasing along rows and up columns. 

An  alternative definition of $g_T$ noted that $g_T$ (up to a constant) is the result of a Young idempotent operator applied to a monomial.  In particular, define $$\varepsilon_T=\sum_{\tau \in \mathcal{C}(T)}\sum_{\sigma \in \mathcal{R}(T)}(-1)^\tau \tau \sigma$$ where $\mathcal{C}(T)\subseteq \mathfrak{S}_n$ is the group of \textit{column permutations} that preserve the columns of $T$, and $\mathcal{R}(T)\subseteq \mathfrak{S}_n$ is the group of \textit{row permutations}. 
 Then it is not hard to check that, for a constant $c$, we have $c\cdot g_T= \varepsilon_T(\xx_T^S)$ where $\xx_T^r=\prod x_i^{\mathrm{row}(i)-1}$ with $\mathrm{row}(i)$ denoting the row that $i$ occurs in in $T$, indexed from bottom to top.  (Specifically, the constant $c$ is $\prod \lambda_i!$ where $\lambda$ is the shape of $T$) Ariki, Terasoma, and Yamada then generalized this construction to define \begin{equation}\label{eq:ATY} F_T^S(\mathbf{x}) = \varepsilon_T(\xx_T^S)\end{equation} where $T,S$ are a pair of standard Young tableaux of the same shape, and $\xx_T^S$ is a monomial whose subscripts are given by the entries of $T$ and whose exponents are determined by the \textit{cocharge} algorithm on the corresponding boxes in $S$.   The main result in \cite{ATY} was as follows.

 \begin{theorem}[\cite{ATY}]\label{thm: coinvariant higher specht}
     The polynomials $F_T^S(\mathbf{x})$ form a basis for the one-variable coinvariant ring $R_n=\mathbb{C}[x_1,\ldots,x_n]/(e_1,\ldots,e_n)$, where $e_i$ is the $i$-th elementary symmetric polynomial.
 \end{theorem}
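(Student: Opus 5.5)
Let $n!$ denote $\dim R_n$. Since $\sum_{\lambda\vdash n} |\SYT(\lambda)|^2 = n!$, and the pairs $(T,S)$ of standard tableaux of the same shape number exactly $n!$, it suffices to prove that the $F_T^S$ are linearly independent modulo $(e_1,\ldots,e_n)$. We work in the graded setting and use the Frobenius characteristic: the graded character of $R_n$ is $\sum_\lambda \widetilde K_{\lambda,1^n}(q) s_\lambda$, where the coefficient counts $S\in\SYT(\lambda)$ by cocharge. So for each fixed $\lambda$ and degree $d$, the $V_\lambda$-isotypic component of $(R_n)_d$ has multiplicity $\#\{S\in\SYT(\lambda): \cc(S)=d\}$. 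Since $F_T^S=\varepsilon_T(\xx_T^S)$ is homogeneous of degree $\cc(S)$, the argument reduces to two steps. First, for fixed $S$, the polynomials $\{F_T^S\}_{T}$ span a quotient of the Specht module $V_\lambda$ with the same $\mathfrak S_n$-action as the $g_T$. Second, the images of the $F_{T_0}^S$, for one fixed $T_0$ (say the row superstandard tableau) and $S$ ranging over $\SYT(\lambda)$ of a given cocharge $d$, are linearly independent in $R_n$.

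For the first step, note that $\sigma \varepsilon_T \sigma^{-1}=\varepsilon_{\sigma T}$ and $\sigma\xx_T^S=\xx_{\sigma T}^S$. Hence $\sigma F_T^S=F_{\sigma T}^S$, and $T\mapsto F_T^S$ is the image of a $\mathfrak S_n$-map out of the Specht module, generated by the image of the Young symmetrizer. In particular, $F_T^S$ lies in the $\lambda$-isotypic part, it either vanishes or spans a copy of $V_\lambda$, and the Garnir straightening relations among the $g_T$ transfer to the $F_T^S$. Consequently $\{F_T^S: T\in\SYT(\lambda)\}$ is a basis of that copy whenever one element is nonzero. It then remains to show that the copies for distinct $S$ of the same shape and degree are independent modulo the ideal.

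For the second step, which I expect to be the main obstacle, I would use a leading-term argument. Fix a term order on monomials and a Gröbner-type standard monomial basis of $R_n$, namely the Artin basis $\{x^a: a_i\le i-1\}$ after a suitable reordering of variables. Then I would show that, after reducing modulo $(e_1,\ldots,e_n)$, the leading monomial of $F_{T_0}^S$ is $\xx_{T_0}^S$ up to a permutation fixing the shape data. The cocharge exponents are designed for this: each column of $S$ read by the cocharge labels has exponents forming a staircase compatible with $a_i\le i-1$. Thus these monomials are Artin monomials that are pairwise distinct for distinct $S$. The hard part is checking that the antisymmetrization over the column group does not cancel this term and does not produce larger non-standard terms. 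For that, I would use the fact that the row symmetrization only permutes equal-cocharge structure within rows, while the column antisymmetrization produces a Vandermonde-like factor whose dominant term is exactly the staircase monomial. Distinct leading standard monomials give independence, which completes the count, since the total is $n!=\dim R_n$.

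A possible fallback for the second step would be to evaluate the $F_{T_0}^S$ against the harmonic dual, i.e.\ pair them via $\partial$ with the Garsia–Procesi/harmonic Specht derivatives $\Delta_{T}(\partial)$ applied to the Vandermonde. One would then show that the pairing matrix is triangular in a cocharge-compatible order with nonzero diagonal entries.
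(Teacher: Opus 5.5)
The paper does not prove this theorem; it quotes it from \cite{ATY}, so your proposal has to stand on its own. Your reduction is sound. The pair count gives $n!=\dim R_n$. The identity $\sigma F_T^S=F_{\sigma T}^S$, together with $\mathbb{C}[\mathfrak{S}_n]\varepsilon_{T_0}\cong M_\lambda$, makes $T\mapsto F_T^S$ the image of an equivariant map out of the Specht module. By Schur's lemma it then suffices to show that, for each $\lambda$ and $d$, the classes of $F_{T_0}^S$ with $S\in\SYT(\lambda)$ and $\cc(S)=d$ are linearly independent in $R_n$.

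That last statement is the entire content of the theorem, and your second step gives no valid argument for it. The mechanism you describe in fact fails. The Artin condition is an upper bound on exponents relative to one fixed order of the variables, while column-strictness of $\cc(S)$ only gives lower bounds. For example, take $\lambda=(2,1,1)$ and $S$ with first column $1,2,3$ and bottom row $1,4$. The bottom row of $\cc(S)$ is $0,2$, so with $T_0$ row superstandard we get $\xx_{T_0}^S=x_2^2x_3x_4^2$, which violates $a_i\le i-1$ at $x_2$ and $a_i\le n-i$ at $x_4$.

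More decisively, no term order makes $\xx_{T_0}^S$ both the leading term of $F_{T_0}^S$ and a standard monomial for every $S$. Take $\lambda=(2,1)$ and $T_0$ with bottom row $1,2$; the two polynomials are $2(x_3-x_1)$ and $x_2x_3-x_1x_2$. For any term order, the standard monomials of $(e_1,\ldots,e_n)$ are the Artin monomials for the induced order on the variables, so the largest variable never occurs in a standard monomial. Requiring $x_3$ to be both leading and standard forces $x_2>x_3>x_1$. But then the leading term $x_2x_3$ of the second polynomial is not standard.

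The supporting heuristics do not repair this. The rows of $\cc(S)$ are weakly increasing, not constant, so the row symmetrizer genuinely moves exponents between variables; for instance $(\mathrm{id}+(1\,2))\,x_2x_3=x_2x_3+x_1x_3$. It is true that $F_T^S$ is divisible by $g_T$, but that says nothing about the leading term of the cofactor. Nor does it tell you whether reducing the many non-standard terms cancels your candidate leading term, which is exactly the point you flag as ``the hard part'' and then leave open.

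The fallback has the same problem. Within the block you must handle, all $S$ have the same shape and the same cocharge, so a ``cocharge-compatible'' order imposes no triangularity there. The real difficulty is separating such $S$, for example the two tableaux of shape $(3,1,1)$ with first columns $1,2,5$ and $1,3,4$, both of cocharge $5$. What is missing is an actual triangularity statement: a specific order on $\SYT(\lambda)$ and a specific basis or dual family for $R_n$ against which the $F_{T_0}^S$ are provably triangular with nonzero diagonal.
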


  Since then, many important generalizations of the coinvariant ring have been studied from the higher Specht point of view, including: 
  \begin{itemize}
      \item The Garsia-Procesi modules $R_\mu$ \cite{GarsiaProcesi}, which correspond to the \textit{Springer fibers},  the set of flags fixed by a nilpotent matrix of Jordan type $\mu$.  Springer theory \cite{Springer1976GreenFunctions, Springer1978Construction} also shows that the top degree part of $R_\mu$ is precisely the irreducible $\mathfrak{S}_n$-module $M_\mu$, giving these modules a special role in $\mathfrak{S}_n$ representation theory. 
      \item The $t=0$ Delta modules $R_{n,k}$, for which Gillespie--Rhoades found a basis in \cite{GillespieRhoades}.
      \item The diagonal coinvariant ring $\mathrm{DR}_n$, formed by quotienting the two-variable polynomial ring $$\mathbb{C}[x_1,\ldots,x_n,y_1,\ldots,x_n]$$ by the ideal generated by the positive degree invariants under the diagonal action of $\mathfrak{S}_n$.  No higher Specht bases is yet known, but the first monomial basis has been constructed in \cite{CarlssonOblomkov}.
      \item The Garsia-Haiman modules $\mathrm{DR}_\mu$, a quotient of $\mathrm{DR}_n$ that can be thought of as the two-variable analog of $R_\mu$, and for which Gillespie found a higher Specht basis for hook shapes $\mu$ \cite{HigherSpechtDiagonal}.
  \end{itemize} 

  In the first setting above, the Garsia--Procesi modules $R_\mu$, Gillespie--Rhoades posed a conjectural basis $\BBB_\mu'$ that generalized Ariki, Terasoma, and Yamada's construction, and proved it held for two-row shapes $\mu$.  Chou and Hanada \cite{ChouHanada} found a counterexample to the Gillespie--Rhoades construction in a case in which $\mu$ has five rows, showing in fact that one of the conjectured basis elements was identically zero as a polynomial.  They then posed a new conjectural basis $\BBB_\mu$ based on catabolizability type of standard Young tableaux, and also proved it for two row shapes.  Our first main result is as follows.

  \begin{theorem}\label{thm:main1}
      The Chou--Hanada conjectural basis $\BBB_\mu$ is a basis for $R_\mu$ for all $\mu$ having at most $3$ rows, and for hook shapes $\mu$.  Moreover, $\BBB_\mu=\BBB'_\mu$ for three rows and hook shapes.
  \end{theorem}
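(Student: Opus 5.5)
The plan is to reduce the basis statement to a counting statement plus a nonvanishing and independence statement. We know $\dim R_\mu = \binom{n}{\mu}$, and the graded Frobenius character of $R_\mu$ is the modified Hall--Littlewood function $\tilde H_\mu(\mathbf{x};q)$. Its Schur expansion is governed by charge, or equivalently by catabolizability, of standard Young tableaux. The Chou--Hanada set $\BBB_\mu$ is indexed by pairs $(T,S)$ with $T,S$ standard of the same shape $\lambda$ and $S$ ranging over tableaux of the appropriate catabolizability type for $\mu$. Summing $f^\lambda$ over these pairs already gives the correct total, with each isotypic piece in the correct degree. So it suffices to show two things:
\begin{itemize}
\item[(a)] each element $F_T^S$ of $\BBB_\mu$ is nonzero in $R_\mu$;
\item[(b)] for fixed $\lambda$ and fixed degree, the copies of $V_\lambda$ spanned by $\{F_T^S\}_T$ for the different $S$ are linearly independent modulo the Tanisaki ideal $I_\mu$.
\end{itemize}
Since each $\{F_T^S\}_T$ is the image of a Specht module under an $\mathfrak{S}_n$-map, (a) already forces it to span a copy of $V_\lambda$. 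The higher Specht condition then follows from (b) and the dimension count.

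For (a), I would first develop the general nonvanishing criterion promised in the abstract. The key observation is that $\varepsilon_T(\xx_T^S)$ is nonzero modulo $I_\mu$ if and only if a suitable leading monomial survives. Concretely, I would pick a monomial order (for instance lex after reordering the variables by the columns of $T$) under which the leading term of $F_T^S$ is a sorted version of $\xx_T^S$. Then I would compare this leading term with a known monomial basis of $R_\mu$, namely the Garsia--Procesi basis of $\mu$-sub-Yamanouchi monomials, which are the standard monomials for $I_\mu$ in the appropriate order. In other words, the criterion is that $F_T^S \neq 0$ in $R_\mu$ whenever its exponent vector, sorted along columns, is dominated by the exponent vector of some $\mu$-sub-Yamanouchi word. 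For (b), I would use triangularity in the same order: distinct $S$ of a fixed shape give distinct leading exponent vectors in that order, so the corresponding copies are independent. The natural test vector is $F_T^S$ with $T$ the superstandard tableau.

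For the restricted shapes I would verify the criterion combinatorially.
\begin{itemize}
\item For hooks $\mu=(n-k,1^k)$, $I_\mu$ contains all $e_d(x_S)$ with simple explicit degree bounds. The cocharge exponents of the admissible $S$ are bounded by $k$, with a clean description, so both the Chou--Hanada condition and the Gillespie--Rhoades condition reduce to the same inequality. This gives $\BBB_\mu=\BBB'_\mu$, and the leading-term check is direct.
\item For three-row $\mu=(\mu_1,\mu_2,\mu_3)$, I would characterize catabolizability of type at least $\mu$ explicitly for tableaux whose cocharge words are $3$-row-like. I would then show this equals the Gillespie--Rhoades condition, which gives $\BBB_\mu=\BBB'_\mu$, by a case analysis on how the cocharge labels distribute among the first three rows of $S$.
\end{itemize}

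The main obstacle I expect is the three-row case of the leading-term argument. There the leading monomial of $\varepsilon_T(\xx_T^S)$ can cancel, since column antisymmetrization may kill it when two entries of the same column receive equal exponents. It must also be matched to a sub-Yamanouchi monomial of $\mu$, which requires an injective, degree-preserving correspondence between admissible $(S,\text{columns})$ data and sub-Yamanouchi words. I would first attempt this by the Garsia--Procesi recursion $R_\mu \to R_{\mu^{(i)}}$, obtained by removing a variable $x_n$ and a box from row $i$. The aim is to show that $F_T^S$ restricts, after setting up $x_n$-degree filtrations, to higher Specht elements for smaller three-row or two-row shapes, and then to induce on $n$ using the known two-row result as the base case.
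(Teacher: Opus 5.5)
\textbf{The gap is in step (a), nonvanishing of $F_T^S$ modulo $I_\mu$.} This is where essentially all the difficulty of the theorem lies, and your proposal does not supply a mechanism that is shown to work.

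The leading-term criterion only works in one direction. If $\mathrm{LM}(f)$ is a standard monomial then $f\notin I_\mu$, but $f\notin I_\mu$ does not force this. You also give no argument that the leading monomial of $F_T^S$ (which, as you note, can cancel) is $\mu$-sub-Yamanouchi in any order. Take $\mu=(1,1,1)$ and $S=T$ the tableau with $1,2$ in the bottom row. Then $F_T^S=2(x_3-x_1)$, and in lex order with $x_1>x_2>x_3$ its leading monomial is $x_1=\mathrm{LM}(e_1)$. This is not standard, even though $F_T^S\notin I_\mu$. So the order must be tailored to $T$, and the required matching with sub-Yamanouchi monomials is exactly the content you have not proved.

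The fallback through the Garsia--Procesi recursion is not an argument either. That filtration by powers of $x_n$ is only $\mathfrak{S}_{n-1}$-equivariant. Nothing you say shows that the fully $\mathfrak{S}_n$-symmetrized $F_T^S$ maps, in some subquotient, to $x_n^{i-1}$ times a higher Specht element of a smaller shape.

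The paper instead argues dually. By Bergeron--Garsia, $f\notin I_\lambda$ as soon as $f(\partial)g_U\neq 0$ for a single filling $U$ of shape $\lambda$. Since $I_\mu\subseteq I_{\ctype(S)}$, it suffices to take $\lambda=\ctype(S)$. The test filling $U=U(S,T)$ comes from running Blasiak's catabolizability algorithm on $\cc(\rw(S))$ while recording the entries of $T$. One then shows the monomial $\partial_T^S\xx_U$ occurs in $F_T^S(\partial)g_U$ with positive coefficient:
\begin{itemize}
\item it occurs for $(P,\tau)=(T,\mathrm{id})$;
\item every contributing pair $(P,\tau)$ with $\tau\in\mathcal{C}(U)$ satisfies $\tau\in\mathcal{C}(T)$ and $(-1)^P=(-1)^\tau$.
\end{itemize}
The second point uses the explicit classification of tableaux of three-row catabolizability type (cocharge $0$'s and $1$'s in the bottom row, $1$'s in the middle, $2$'s on top). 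It also uses the uniqueness of the $3\times 3$ exponent-transfer matrix, which is specific to three rows.

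\textbf{Smaller points.}
\begin{itemize}
\item For (b) no triangularity is needed. For three-row $\mu$, each pair (shape, total cocharge) is realized by at most one $S\in\SYT^{\unrhd\mu}$. So once each copy is nonzero, the copies lie in distinct isotypic components or degrees, and independence is automatic. Your triangularity plan rests on the same unverified leading-term claims.
\item For $\BBB_\mu=\BBB'_\mu$ there is no ``Gillespie--Rhoades condition'' on standard tableaux to compare with. $\BBB'_\mu$ is indexed by $\SSYT_\mu$ with exponent fillings $\cc(S)$, so you need a bijection $\SSYT_\mu\to\SYT^{\unrhd\mu}$ preserving shape and cocharge tableau.
\item The paper uses $T\mapsto\std(\cc(T))$. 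This requires showing that $\cc(T)$ is semistandard for three-row content (false for $(2,2,2,2)$) and that $\cc$ is injective on $\SSYT_\mu$, together with Lascoux's equinumerosity. For hooks, plain standardization works.
\item For hook shapes, the paper does no leading-term check. It specializes Gillespie's hook-shape Garsia--Haiman higher Specht basis at $\mathbf{y}=0$. Exactly the elements with $\ctype(S)\unrhd\mu$ survive, and they coincide with the $F_T^S$.
\end{itemize}
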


  Indeed, the basis $\BBB_\mu$ happens to agree with the failed Gillespie--Rhoades construction in the three row case and the hook shape case, even though (as we illustrate in Section \ref{sec:equivalence}) they diverge for four rows.  This indicates that something fundamental in the structure may shift at four rows, and new methods may be needed to prove that the Chou--Hanada basis is a valid construction for all shapes.

\subsection{Results on nonvanishing of higher Specht polynomials}
 
Since the Gillespie--Rhoades construction turned out to generate polynomials that were in fact identically zero, we also explore in this paper a general theory for constructing nonvanishing higher Specht bases for copies of $M_\lambda$ in the full polynomial ring, via the following algorithm.

\begin{definition}
    A \textbf{Young symmetrized higher Specht basis} for partition shape $\lambda$ of size $n$ is a set of polynomials in $\mathbb{C}[x_1,\ldots,x_n]$ formed via the following algorithm.  
    \begin{enumerate}
        \item Fix a \textbf{generalized cocharge tableau} $S$ of shape $\lambda$, which is any filling of the boxes with  entries in $\{0,1,2,\ldots\}$.  
        \item     For each standard Young tableau $T$ of shape $\lambda$, define $$\xx_T^S=\prod_{b\in \lambda} x_{T(b)}^{S(b)}$$ where $b$ ranges over all boxes in the Young diagram of $\lambda$.
        \item  Define $F_T^S=\varepsilon_T \xx_T^S$
    for all $T$, and write $B_\lambda^S=\{F_T^S: T\in \SYT(\lambda)\}$ where $\SYT(\lambda)$ is the set of all standard Young tableaux of shape $\lambda$.
    \end{enumerate}
   Then $B_\lambda^S$ is the Young symmetrized higher Specht basis for $S$.
\end{definition}
  
Our main results on these constructions are as follows.

\begin{proposition}
  For a given $S$, if any one element of $B_\lambda^S$ is nonzero, then all elements are nonzero and they span a copy of $M_\lambda$ in the polynomial ring.
\end{proposition}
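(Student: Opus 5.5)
The plan is to reduce everything to equivariance of the construction under $\mathfrak{S}_n$ and the classical theory of Young symmetrizers. The first step is the conjugation identity
$$\pi\,\varepsilon_T\,\pi^{-1}=\varepsilon_{\pi T}\quad\text{and}\quad \pi\cdot \xx_T^S=\xx_{\pi T}^S \quad\text{for all } \pi\in\mathfrak{S}_n,\ T\in\Tab(\lambda).$$
Here $\pi T$ is obtained by applying $\pi$ to the entries of $T$. The first identity holds because $\pi\mathcal{C}(T)\pi^{-1}=\mathcal{C}(\pi T)$, $\pi\mathcal{R}(T)\pi^{-1}=\mathcal{R}(\pi T)$, and conjugation preserves sign. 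The second is immediate from $\xx_T^S=\prod_b x_{T(b)}^{S(b)}$. Together they give
$$\pi\cdot F_T^S=\pi\varepsilon_T\pi^{-1}\,\pi\,\xx_T^S=F_{\pi T}^S.$$
So all $F_T^S$ with $T\in\Tab(\lambda)$ form a single $\mathfrak{S}_n$-orbit, and since $\pi$ acts invertibly, one of them vanishes if and only if all do. This settles the nonvanishing claim, including for standard $T$.

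For the spanning claim, I would fix one standard tableau $T_0$ and consider the left ideal $L=\mathbb{C}[\mathfrak{S}_n]\varepsilon_{T_0}$. It is standard that $L\cong V_\lambda$ is irreducible, since $\varepsilon_{T_0}$ is a scalar multiple of a primitive idempotent. Define the $\mathfrak{S}_n$-equivariant map
$$\Phi_S\colon L\to\mathbb{C}[x_1,\ldots,x_n],\qquad a\mapsto a\cdot\xx_{T_0}^S.$$
Writing $\sigma_T$ for the permutation with $\sigma_T T_0=T$, the conjugation identity gives $\sigma_T\varepsilon_{T_0}\mapsto \sigma_T\varepsilon_{T_0}\sigma_T^{-1}\,\xx_{\sigma_T T_0}^S=F_T^S$. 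By Schur's lemma $\Phi_S$ is either zero or injective. It is nonzero by hypothesis, so its image is a copy of $V_\lambda\cong M_\lambda$, and that image is spanned by the $F_T^S$ with $T\in\Tab(\lambda)$.

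It remains to see that the standard ones, $B_\lambda^S$, already form a basis of this image. This is the step that needs a little care, and I would get it by comparison with the classical case. Take $S=S_r$, the row-index filling, so that $\xx_T^{S_r}=\xx_T^r$. By the remark before \eqref{eq:ATY}, $\Phi_{S_r}(\sigma_T\varepsilon_{T_0})=c\,g_T$. The $g_T$ with $T$ standard are a basis of $M_\lambda$, so $\Phi_{S_r}$ is nonzero and hence injective. It follows that the elements $\sigma_T\varepsilon_{T_0}$ with $T\in\SYT(\lambda)$ are linearly independent in $L$. There are $f^\lambda=\dim L$ of them, so they form a basis of $L$. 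Applying the injective map $\Phi_S$ then carries this basis to $B_\lambda^S$, which is therefore a basis of the image. Consequently the elements of $B_\lambda^S$ are linearly independent, all nonzero, and span a copy of $M_\lambda$.
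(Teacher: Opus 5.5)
Your proof is correct. The paper states this proposition in the introduction but never writes out a proof; it treats the result as a standard consequence of Young symmetrizer theory. So your argument fills in what the paper leaves implicit rather than competing with a written proof.

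For comparison, the first half of the statement (one nonzero element forces all to be nonzero) can also be read off from the paper's Section~3. There, $F_T^{Z}$ vanishes identically if and only if $\sum_{\tau\in A(Z')}(-1)^\tau=0$ for every row permutation $Z'$ of $Z$, and this criterion makes no reference to $T$. Your equivariance identity $\pi\cdot F_T^S=F_{\pi T}^S$ reaches the same conclusion more cheaply.

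It also gives what the snaking computation does not: the spanning and independence statement, obtained by applying Schur's lemma to $a\mapsto a\cdot \xx_{T_0}^S$ on the irreducible left ideal $\mathbb{C}[\mathfrak{S}_n]\varepsilon_{T_0}$.

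Specializing to the row-index filling is a nice way to show that $\{\sigma_T\varepsilon_{T_0}:T\in\SYT(\lambda)\}$ is a basis of that ideal using the classical Specht basis, rather than citing Young's natural basis theorem. That deduction only needs linearity of $\Phi_{S_r}$, not its injectivity.

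As a bonus, $\Phi_S\circ\Phi_{S_r}^{-1}$ is an $\mathfrak{S}_n$-isomorphism $M_\lambda\to\mathrm{span}\,B_\lambda^S$ sending $g_T\mapsto F_T^S/c$ for every $T\in\Tab(\lambda)$. This is exactly the compatibility with the Specht basis required by condition (2) in the definition of a higher Specht basis.
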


It therefore is of interest to find conditions for which this construtions yields nonzero polynomials.  To do so, we give a simplified way to compute the coefficient of any monomial in a higher Specht polynomial $F_T^S$, which allows us to check that a construction is nonzero by finding a coefficient that is nonzero.  We first note that all monomials appearing in $F_T^S$ come from ``snakings'' of $T$, defined as follows.

\begin{definition}
     Let $T\in \SYT(\lambda)$.  A \emph{$T$-snaking} is a filling $P$ with shape $\lambda$ such that if $i,j$ occur in the same column in $T$, then $i,j$ occur in different rows of $P$.     
\end{definition}

    We refer to these fillings as $T$-\emph{snaking} because of the following visualization: we may take the columns of $T$ and ``snake'' them down different paths (possibly disconnected) as in the following example:

\begin{example}\label{ex: t-snaking-1}
    Define $S,T,P$ as follows: $$S = 
    \begin{ytableau}
        0 & 1 \\
        1 & 2 & 0 \\
        2 & 1 & 3
    \end{ytableau}\hspace{1.5cm} T = 
    \begin{ytableau}
        *(pink)6 & *(cyan)8 \\
        *(pink)3 & *(cyan)4 & *(lime)7 \\
        *(pink)1 & *(cyan)2 & *(lime)5
    \end{ytableau}\hspace{1.5cm}
        P = 
    \begin{ytableau}
        *(cyan)4 & *(pink)1 \\
        *(lime)5 & *(pink)6 & *(cyan)2 \\
        *(cyan)8 & *(lime)7 & *(pink)3
    \end{ytableau}$$
Then $P$ is a $T$-snaking, and the corresponding monomial is
    $$ \xx_P^S = x_1^1 x_2^0 x_3^3 x_4^0x_5^1x_6^2x_7^1x_8^2 = x_1x_3^3x_5x_6^2x_7x_8^2.$$
\end{example}

We will define the \textbf{box action}, denoted $\cdot$, of permutations on $S$, where a permutation is thought of as a permutation on the boxes of $\lambda$ and applies to $S$ by moving its labels in the corresponding way.

\begin{definition}
    The \textbf{$\rho$-group} $H(S)$ of a tableau $S$ is the group of row permutations $\rho$ for which $\rho\cdot S=S$.
\end{definition}

\begin{definition}
    The \textbf{(alternating) $\tau$-set} $A(S)$ of a filling $S$ of a Young diagram $\lambda$ is the set of column permutations $\tau$ of the standard filling $T$ of $\lambda$, whose box action on $S$ preserves the set of elements of each row of $S$.
\end{definition}

\begin{example}\label{ex:S}
 If we set $S$ as below, and we label the boxes as shown at right: $$S=\young(100,0112) \hspace{2cm}\young(246,1357)$$ then we have $$H(S)=\{\mathrm{id, (46),(35),(46)(35)}\} \hspace{0.5cm}\text{ and }\hspace{0.5cm}A(S)=\{\mathrm{id},(12)(34),(12)(56)\}.$$
\end{example}

\begin{theorem}\label{thm:main2}
    The coefficient of $\xx_P^S$ in $F_T^S$ is $$\pm |H(S)|\cdot \sum_{\tau \in A(\pi\cdot S)}(-1)^\tau$$ where $\pi$ is the row permutation that returns each box of $P$ to its original column in $T$.
\end{theorem}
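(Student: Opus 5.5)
We expand $F_T^S=\varepsilon_T(\xx_T^S)=\sum_{\tau\in\mathcal{C}(T)}\sum_{\sigma\in\mathcal{R}(T)}(-1)^\tau\,\tau\sigma(\xx_T^S)$ and translate everything into the box action on $S$. Write $\xx_T^S=\prod_b x_{T(b)}^{S(b)}$. A permutation of the values $1,\dots,n$ that is a row (resp. column) permutation of $T$ corresponds to a permutation of the boxes of $\lambda$ preserving rows (resp. columns). Applying $\sigma$ and then $\tau$ to $\xx_T^S$ gives a monomial of the form $\prod_b x_{T(b)}^{(\,\sigma'\tau'\cdot S)(b)}$ for suitable box permutations. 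Equivalently, it is $\xx_{Q}^S$ with $Q$ obtained from $T$ by moving entries: $Q=\sigma\tau T$ as a filling, which is exactly a $T$-snaking.

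We therefore regard each term as the filling $Q_{\tau,\sigma}$ of $\lambda$ whose monomial is $\xx^S_{Q}$. Two fillings $Q,Q'$ give the same monomial iff each variable $x_i$ receives the same exponent. Since the exponent of $x_i$ in $\xx_Q^S$ is $S$ evaluated at the box of $i$ in $Q$, this means the labels of $S$ agree after relabeling.

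Fix $P$. The terms $(\tau,\sigma)$ contributing $\xx_P^S$ are those with $\tau\sigma(\xx_T^S)=\xx_P^S$. Act on this equation by $\tau^{-1}$. The condition becomes $\sigma(\xx_T^S)=\tau^{-1}(\xx_P^S)$, i.e. the monomial of $T$ with exponents from some row-permuted $S$ equals the monomial of $\tau^{-1}P$.

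Next, use $\pi$, the row permutation returning each box of $P$ to its column in $T$. Write $\tau^{-1}(\xx_P^S)$ as a monomial indexed by $T$'s positions with exponent filling obtained from $\pi\cdot S$ by a column box-permutation $\tau'$ (the box version of $\tau^{-1}$). The condition then reads: there exists a row permutation $\sigma'$ with $\sigma'\cdot S=\tau'\cdot(\pi\cdot S)$ as fillings.

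Such a $\sigma'$ exists iff $\tau'\cdot(\pi\cdot S)$ has the same multiset of entries in each row as $S$, hence as $\pi\cdot S$, since $\pi$ is a row permutation. This is exactly $\tau'\in A(\pi\cdot S)$. When one exists, the number of such $\sigma'$ is a coset of the stabilizer, so it equals $|H(S)|$. Summing signs, and using $(-1)^{\tau}=(-1)^{\tau'}$, gives $|H(S)|\sum_{\tau\in A(\pi\cdot S)}(-1)^\tau$. The overall $\pm$ comes from identifying the column group of $T$ with the column box-permutations after conjugation by $\pi$, which may contribute a global sign.

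The main obstacle is the bookkeeping between the value action of $\mathcal{C}(T),\mathcal{R}(T)$ on variables and the box action on $S$. Conjugating by $\pi$ maps $\mathcal{C}(T)$ to column permutations of the standard filling only up to relabeling, and one must check that this correspondence is a bijection onto all column box-permutations. One must also check that the $\sigma$-count is uniform, i.e. that every contributing $\tau$ has exactly $|H(S)|$ compatible $\sigma$ and no double counting occurs. I would first carefully verify these identifications on Example~\ref{ex: t-snaking-1}, then write the general bijection $(\tau,\sigma)\mapsto(\tau',\sigma')$ explicitly.
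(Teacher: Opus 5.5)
Your argument is correct, and it takes a more direct route than the paper's, once one piece of the sign bookkeeping is fixed. The column box-permutation $\tau'$ you obtain is not simply the box version of $\tau^{-1}$. Write $\xx_P^S=\xx_{T_P}^{\pi\cdot S}$ with $T_P=\gamma\cdot T$, where $\gamma$ is the column permutation recording how the snakes of $P$ are reordered; $\gamma$ is fixed once $P$ is fixed. Then $\tau^{-1}(\xx_P^S)=\xx_T^{\tau'\cdot(\pi\cdot S)}$ with $\tau'=\beta\gamma^{-1}$, where $\beta$ is the box version of $\tau^{-1}$.

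Since $(-1)^\gamma=(-1)^P$, this gives $(-1)^{\tau'}=(-1)^P(-1)^\tau$, not $(-1)^{\tau}$. The global sign is therefore $(-1)^P$. It comes from this fixed translation by $\gamma$, not from conjugating by $\pi$, since conjugation cannot change a sign. Because $\gamma$ is fixed, $\tau\mapsto\tau'$ is still a bijection from $\mathcal{C}(T)$ onto all column box-permutations. That settles the obstacle you flagged, and your coset count of the compatible $\sigma$'s is exactly right.

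One minor slip: the filling attached to the term $\tau\sigma(\xx_T^S)$ is $\tau\circ\sigma\circ T$, not $\sigma\tau T$. The latter need not be a $T$-snaking, though nothing in your argument actually uses this.

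Compared with the paper, your route is genuinely different. The paper first rewrites $F_T^S$ as a signed sum over $T$-snakings. It then proves (Lemma \ref{lem: same-monomial-rowcol}) a bijection between snakings $P'$ with $\xx_{P'}^S=\xx_P^S$ and pairs $(\tau,\rho)$ with $\tau\in A(S_P)$ and $\rho\in H(S)$, choosing canonical representatives of the $H(S)$-orbits.

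You skip both the snaking reformulation and the canonical representatives. You read off the coefficient as a signed fiber count over $\mathcal{C}(T)\times\mathcal{R}(T)$, fix $\tau$, and obtain $|H(S)|$ as the size of a coset of the stabilizer of $S$ in the row group. This is shorter and needs no injectivity statement for $(\tau,\sigma)\mapsto$ snaking. It also makes the sign transparent: each term carries $(-1)^\tau$ straight from $\varepsilon_T$, and the global factor is visibly $(-1)^P$. The paper leaves the corresponding relation $(-1)^{P'}=(-1)^P(-1)^\tau$ implicit.

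The paper's route, in exchange, describes explicitly which snakings produce a given monomial. That is the viewpoint it reuses when comparing signs in Section \ref{sec:nonzero}.
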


\begin{example}
  For any fixed $T$ of shape $(4,3)$, the coefficient of $\xx_T^S$ in $F_T^S$ where $S$ is the tableau in Example \ref{ex:S} is
  $$\pm 4\cdot (1+1+1)=\pm 12$$
 by the computation of $H(S)$ and $A(S)$ above.
\end{example}

As another corollary, we obtain an extension of a result in \cite{HigherSpechtDiagonal} as a nonvanishing condition.

\begin{corollary}
    If $S$ is semistandard (weakly increasing across rows, strictly up columns) then the elements of $B_\lambda^S$ are nonzero.
\end{corollary}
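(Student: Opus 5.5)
Since the preceding Proposition says that one nonzero element of $B_\lambda^S$ makes all of them nonzero, it is enough to find one $T\in\SYT(\lambda)$ and one monomial whose coefficient in $F_T^S$ is nonzero. The natural choice is the trivial snaking $P=T$, i.e.\ the monomial $\xx_T^S$ itself. Here the row permutation $\pi$ returning each box of $P$ to its column in $T$ is the identity, so $\pi\cdot S=S$. By Theorem \ref{thm:main2}, the coefficient of $\xx_T^S$ in $F_T^S$ is $\pm|H(S)|\sum_{\tau\in A(S)}(-1)^\tau$. Since $|H(S)|\ge 1$, the whole task reduces to showing that for semistandard $S$ the signed sum over $A(S)$ is nonzero. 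I will show the stronger statement $A(S)=\{\mathrm{id}\}$, so the sum equals $1$.

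Let $\tau$ be a column permutation whose box action on $S$ preserves the multiset of entries in each row. I will argue by induction on rows, starting from the bottom row (row $1$), that $\tau$ fixes every box.

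\begin{itemize}
\item \textbf{Bottom row.} Suppose $\tau$ fixes all boxes of rows $1,\ldots,r-1$. Then, within each column, $\tau$ permutes only the boxes in rows $\ge r$. So the box that $\tau$ places in row $r$ of column $c$ comes from some row $\ge r$ of the same column.
\item \textbf{Comparing entries.} Because $S$ is strictly increasing up columns, that entry is $\ge S(r,c)$, with equality exactly when the box is $(r,c)$ itself.
\item \textbf{Using the row condition.} The multiset of row $r$ after applying $\tau$ must equal the original multiset. Each new entry dominates the original entry in its column, so the sums agree only if every inequality is an equality. Hence $\tau$ fixes all boxes in row $r$.
\end{itemize}

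Only column strictness and the row-multiset condition are used; weak increase along rows is not needed. The induction then gives $\tau=\mathrm{id}$.

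Once $A(S)=\{\mathrm{id}\}$, the coefficient of $\xx_T^S$ is $\pm|H(S)|\neq 0$, so $F_T^S\neq 0$, and the Proposition then gives nonvanishing of all of $B_\lambda^S$. The one point that needs care is the convention for the box action of $\tau$ (whether it moves labels forward or via the inverse). Either convention yields a bijection on each column that fixes rows below $r$, so the dominance argument is unaffected. It also needs checking that $P=T$ counts as a legitimate $T$-snaking, which it clearly does.
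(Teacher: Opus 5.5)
This is correct and is essentially the paper's proof. The paper also observes that $A(S)=\{\mathrm{id}\}$ for semistandard $S$ and reads off from the coefficient formula that $\xx_T^S$ appears in $F_T^S$ with coefficient $\pm|H(S)|\neq 0$; your row-by-row induction supplies the justification of $A(S)=\{\mathrm{id}\}$ that the paper leaves implicit, and you rightly note that only column strictness is used. Since your argument applies verbatim to every $T$, you do not need the preceding Proposition to pass from one $T$ to all of $B_\lambda^S$.
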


\subsection{Outline}  The paper is structured as follows.  In Section \ref{sec:background}, we define all of the essential combinatorial and algebraic concepts that will be used throughout the paper.  In Section \ref{sec:conditions}, we prove Theorem \ref{thm:main2} and explore the question of when a higher Specht polynomial is nonzero.  We prove most of Theorem \ref{thm:main1} in Section \ref{sec:proof}, except for the difficulty of proving that the conjectured higher Specht polynomials do not vanish modulo the Tanisaki ideal.  We dedicate Section \ref{sec:nonzero} to this more technical aspect of the proof.

\section{Preliminaries}\label{sec:background}

Throughout, we write Young diagrams of partitions in French notation, meaning that the first part corresponds to the bottom row. A \textbf{semistandard Young tableau} is a filling of the Young diagram where the entries are weakly increasing along the rows and strictly increasing along the columns. Given a partition of $n$, a  \textbf{standard Young tableau} is a semistandard tableau where the numbers $1,\dots,n$ appear exactly once.  Let $\SYT_n$ denote the collection of all standard tableaux of size $n$.

The \textbf{dominance order} (denoted $\unrhd$) on partitions of $n$ is a partial order defined by 
\[\mu\unrhd \lambda \ \  \Leftrightarrow \ \  \mu_1 + \cdots +\mu_k \geq \lambda_1+\cdots  +\lambda_k \text{ for all } k.\]

The \emph{cocharge labels} of a permutation $w$ is a word of length $n$ consisting of the labeling of $w$ that we obtain in the following way. 
Label the 1 of $w$ with $0$. Assume we labeled the letter $i$ with $k$. 
Label $(i+1)$ with a $k$ if it is to the right of $i$. We label $(i+1)$ with a $(k+1)$ if it is to the left of $i$. 
This process is equivalent to scanning $w$ from left to right, labeling entries in increasing order.
The label of $i$ corresponds to the number of times we need to return to the left end of $w$ before we label $i$. 

We define the \emph{cocharge} of $w$, denoted $\cc(w)$, to be the sum of its cocharge labels.
\begin{example}
The permutation $w =25314$ has cocharge labels $w = 2_1 5_2 3_1 1_0 4_1$. Thus we have $\cc(w) = 1+2+1+0+1 = 5$.
\end{example}
    
We can also define cocharge on word of partition content $\mu$.
The \emph{first cocharge subword} $z^{(1)}$ of $z$, where $z$ is of content $\mu$, is constructed in the following way.
Choose the rightmost 1 to be in $z^{(1)}$. 
After choosing $i$ to be in $z^{(1)}$, we choose $i+1$ in the following way.
If there is a $i+1$ to the left of the $i$ we chose, we choose the rightmost such $i+1$ to be in $z^{(1)}$. Otherwise, we choose the rightmost $i+1$ in $w$. We continue this process until we cannot add any more letters. 

Now, delete the subword $z^{(1)}$ from $z$ and repeat, continuing until we have fully decomposed $z$ into subwords $z^{(1)},\dots, z^{(\mu_1)}$. The \emph{cocharge} of $z$ is given by
\[\cc(z) := \sum_{i=1}^{\mu_1} \cc(z^{(i)})\]

The \emph{cocharge} of a tableau is the cocharge of its reading word.  The \textbf{cocharge tableau} of a tableau $S$, written $\cc(S)$, is the tableau formed by placing the subscripts of the cocharge computation on the reading word into the corresponding boxes.

\subsection{Cocharge is greedy}

We now prove a technical lemma about cocharge that does not to our knowledge appear elsewhere in the literature: that the labels on the first cocharge subword are at least as large as the corresponding labels for the second cocharge subword, and so on.  Below, we write $|z|$ for the length of a word $z$.

\begin{lemma} \label{lem:greedy}
    For a word with partition content $\mu$, define $z^{(1)},\ldots,z^{(\mu_1)}$ to be its cocharge subwords in order.  Define $\cc_k^{(i)}$ to be the cocharge subscript on the letter $k$ in $z^{(i)}$ if $|z^{(i)}|\ge k$, and define it to be $-1$ if $|z^{(i)}|<k$.  
    
    Then if $i<j$ we have $c_k^{(i)}\ge c_k^{(j)}$ for all $k$.  Moreover, if $c_k^{(i)}=c_k^{(j)}$ then the $k$ from $z^{(i)}$ appears to the right of that from $z^{(j)}$.
\end{lemma}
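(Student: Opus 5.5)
We argue by induction on $k$, comparing $z^{(i)}$ with $z^{(i+1)}$ only. This suffices: the relation ``$c_k^{(i)}>c_k^{(j)}$, or $c_k^{(i)}=c_k^{(j)}$ with the $k$ of $z^{(i)}$ to the right of that of $z^{(j)}$'' is transitive, so it passes from consecutive pairs to all $i<j$. We therefore write $z'=z^{(1)}$ and $z''=z^{(2)}$, where $z''$ is the first cocharge subword of the word $w$ obtained by deleting $z'$ from $z$. For each $k$, let $p'_k$ and $p''_k$ be the positions in $z$ of the letters $k$ chosen in $z'$ and $z''$. Set $c'_k=c_k^{(1)}$ and $c''_k=c_k^{(2)}$. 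The invariant to prove, for every $k\le|z''|$, is
\[
c'_k>c''_k \quad\text{or}\quad \bigl(c'_k=c''_k \text{ and } p'_k>p''_k\bigr).
\]
When $k>|z''|$ the claim is trivial: since $\mu$ is a partition and $z'$ takes one copy of each letter $1,\dots,\ell(\mu)$, we have $|z''|\le|z'|$, hence $c''_k=-1\le c'_k$. The equality case $c'_k=-1$ forces $|z'|<k$ as well, so it does not occur nontrivially.

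The base case is $k=1$. Here $c'_1=c''_1=0$, and $z'$ takes the rightmost $1$ of $z$ while $z''$ takes the rightmost $1$ remaining, which lies further left. So $p'_1>p''_1$.

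For the inductive step, assume the invariant holds at $k$ and consider the letter $k+1$. Recall the rule: $c_{k+1}=c_k$ if the chosen $k+1$ lies to the left of the chosen $k$, and $c_{k+1}=c_k+1$ otherwise, in which case the rightmost available $k+1$ is taken.
\begin{enumerate}
\item Suppose $c'_k>c''_k$. Since $c'_{k+1}\ge c'_k\ge c''_k+1\ge c''_{k+1}$, it remains to handle equality. Equality forces $z'$ to wrap and $z''$ not to wrap. Then $p'_{k+1}$ is the rightmost $k+1$ in $z$. Since $p''_{k+1}$ is a different copy of $k+1$, it lies strictly to its left, as needed.
\item Suppose $c'_k=c''_k$ and $p'_k>p''_k$. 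If $z''$ finds a $k+1$ in $w$ left of $p''_k$, then it also lies left of $p'_k$, so $z'$ does not wrap either. Both labels then stay equal to $c_k$. We must show $p'_{k+1}>p''_{k+1}$. The point is that $z'$ took the rightmost $k+1$ left of $p'_k$, whereas $z''$ takes the rightmost remaining one left of $p''_k<p'_k$, which is a different copy, so it lies strictly left. If instead $z''$ wraps, then $c''_{k+1}=c''_k+1$, while $c'_{k+1}\in\{c'_k,c'_k+1\}$. The sub-case $c'_{k+1}=c'_k$ would give $c'_{k+1}<c''_{k+1}$, which is the danger to rule out.
\end{enumerate}

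The main obstacle is exactly this last sub-case: $z'$ found a $k+1$ left of $p'_k$, yet $z''$ finds no remaining $k+1$ left of $p''_k$. The copy used by $z'$ lies left of $p'_k$, but possibly between $p''_k$ and $p'_k$, so no contradiction appears immediately. The plan is to strengthen the induction hypothesis to a statement about available letters. Roughly, we would track, for each $k$, which copies of $k+1$ lie in each of the ``windows'' that the two subwords scan. The content being a partition, so that there are at least as many $k$'s as $(k+1)$'s, should make a counting or matching argument close the gap. Alternatively, we could use the known fact that cocharge subwords are invariant under Knuth-type reorderings, and reduce to reading words of tableaux, where the subwords are explicit. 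The fallback is the direct local argument: if the bad configuration occurs, swap the roles of the chosen $k$'s in $z'$ and $z''$ to obtain a contradiction with the greedy rightmost-choice rule at an earlier step.
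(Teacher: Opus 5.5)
Your proposal has a genuine gap. The sub-case you flag as the main obstacle is never resolved: $c'_k=c''_k$, $p'_k>p''_k$, $z'$ finds a $k+1$ to the left of $p'_k$, but $z''$ must wrap. The repair strategies you sketch for it cannot succeed, because the rule you ``recall'' is the charge rule, not the cocharge rule.

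In the paper's convention the label increases, $c_{k+1}=c_k+1$, exactly when the chosen $k+1$ lies to the left of the chosen $k$, i.e.\ it is found without wrapping. The label stays the same, $c_{k+1}=c_k$, when the subword wraps to the rightmost available $k+1$. Under your stated rule the bad configuration really occurs and the claimed inequality really fails. Take $z=1221$: then $z'$ occupies positions $3,4$ (no wrap) and $z''$ occupies positions $1,2$ (wrap). Your rule would give $c'_2=0<c''_2=1$. So no strengthened hypothesis, counting argument, Knuth-invariance reduction or swapping argument can close that case as you have set it up.

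Your write-up is also internally inconsistent. In case 1 you say equality forces $z'$ to wrap and $z''$ not to wrap, which is correct only under the true cocharge rule. In the first sub-case of case 2 you use the reversed rule: ``both labels stay equal to $c_k$'' should read ``both become $c_k+1$''. The conclusion there (equal labels) happens to survive.

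With the correct rule, the obstacle disappears in one line. If $z''$ wraps, then $c''_{k+1}=c''_k=c'_k$. If $z'$ does not wrap, then $c'_{k+1}=c'_k+1>c''_{k+1}$. If $z'$ also wraps, the labels are equal, and $p'_{k+1}$ is the rightmost available $k+1$, so $p'_{k+1}>p''_{k+1}$.

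With this correction and the fix in case 2, your argument becomes a complete proof. It is the same induction on $k$ over consecutive cocharge subwords that the paper uses. The difference is organization: you case on whether each subword wraps, whereas the paper enumerates the twelve admissible relative orders of $n,\mathbf{n},n+1,\mathbf{n+1}$. Your organization is noticeably shorter once the convention is fixed.
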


\begin{proof}
   We first note that it suffices to prove the lemma for the first two cocharge subwords, $z^{(1)}$ and $z^{(2)}$, and inductively the result will follow.  We prove both claims simultaneously by strong induction on $k$.  For $k=1$, all cocharge subscripts are $0$, and the $1$s are encountered in right to left order to make each $z^{(i)}$, so both claims hold.

   For the induction step, let $n\ge 1$ and assume the claim is true for all $k\le n$.  We show the claim holds for $k=n+1$.  If $z^{(1)}$ contains $n$ but $z^{(2)}$ does not, then $c_{n+1}^{(2)}=-1<c_{n+1}^{(1)}$ and we are done.  So we may now assume that $n+1$ appears in both cocharge words. 
    
   Write $\bf{n}$ and $n$ respectively, and $\bf{n+1}$ and $n+1$ respectively, for the copies of $n$, and $n+1$, in $z^{(1)}$ and $z^{(2)}$ respectively.  Let $i=c_n^{(1)}$ and $j=c_n^{(2)}$, and $a=c_{n+1}^{(1)}$ and $b=c_{n+1}^{(2)}$ be the cocharge subscripts on these letters.     
   We wish to show that $a\ge b$, and that if $a=b$ then $\bf{n+1}$ is to the right of $n+1$.

    There are $12$ cases to consider for how $n,{\bf n},n+1,\bf{n+1}$ appear relative to each other in order.  Indeed, there are $24$ permutations of $4$ different letters, but the choice of which $n$ is boldface forces the choice of which $n+1$ is boldface by the cocharge algorithm.  We list these cases below, and calculate $a,b$ in terms of $i,j$ based on the cocharge algorithm:
\begin{center}
\begin{tabular}{c|c|c|c}
    Case & Ordering & $a$ & $b$  \\\hline
    1 & $n_j{\bf n}_i (n+1)_b({\bf n+1})_a$ & $i$ & $j$ \\\hline
    2 & ${\bf n}_i n_j(n+1)_b({\bf n+1})_a$ & $i$ & $j$ \\\hline
    3 & $n_j({\bf n+1})_a{\bf n}_i (n+1)_b$ & $i+1$ & $j$ \\\hline
    4 & ${\bf n}_i (n+1)_b n_j({\bf n+1})_a$ & $i$ & $j+1$ \\\hline
    5 & $n_j(n+1)_b ({\bf n+1})_a{\bf n}_i$ & $i+1$ & $j$ \\\hline
    6 & ${\bf n}_i (n+1)_b ({\bf n+1})_a n_j $ & $i$ & $j+1$ \\\hline
    7 & $({\bf n+1})_an_j{\bf n}_i (n+1)_b$ & $i+1$ & $j$ \\\hline
    8 & $({\bf n+1})_a{\bf n}_in_j (n+1)_b$ & $i+1$ & $j$ \\\hline
    9 & $(n+1)_bn_j({\bf n+1})_a{\bf n}_i $ & $i+1$ & $j+1$ \\\hline
    10 & $({\bf n+1})_a {\bf n}_i (n+1)_bn_j$ & $i+1$ & $j+1$ \\\hline
    11 & $(n+1)_b ({\bf n+1})_an_j{\bf n}_i $ & $i+1$ & $j+1$ \\\hline
    12 & $(n+1)_b ({\bf n+1})_a{\bf n}_i n_j$ & $i+1$ & $j+1$ 
\end{tabular}
\end{center}

We first prove that $a\ge b$.  In all cases above except Case 4 and Case 6, this is immediate by the induction hypothesis, which tells us $i\ge j$.  In Cases 4 and 6, the boldface $\bf n$ is to the left of $n$, so by the induction hypothesis, we have the strict inequality $i>j$.  Thus, even though $a=i$ and $b=j+1$, we still have $a\ge b$. 

We now prove that if $a=b$ then the boldface $\bf n+1$ is to the right of $n+1$ in the word.  Since $\bf n+1$ is to the right of $n+1$ in Cases 1, 2, 4, 5, 6, 9, 11, and 12, we are done in these cases.  In Cases 3, 7, and 8, we have that $a=i+1$ and $b=j$, so since $i\ge j$, we have $a> b$ in these cases.  This leaves Case 10, in which $i>j$ strictly by the induction hypothesis because $\bf n$ is to the left of $n$.  But since $a=i+1$ and $b=j+1$ we conclude $a>b$ strictly as well.

This completes the induction.
\end{proof}

\subsection{Catabolizability type and Blasiak's method}
To each standard tableau $S$, we can associate a partition to it called its \emph{catabolizability type} $\ctype(S)$. This partition is defined using an operation called catabolism on tableaux. 
For more on catabolism, see \cite{Lascoux, SW,hanada}.

Here, we omit the precise definitions of catabolisms and catabolizability type and instead focus on how to compute it.
Blasiak \cite{blasiak} gives a method to compute the catabolizability type of a standard tableau $S$ by looking at its reading word.

 \begin{alg}[\cite{blasiak}]\label{alg: ctype}
   Let the cocharge subscripts of $\rw(S)$ be given by the word $c=c_1\cdots c_n$. 
  We read $c$ from right to left cyclically, starting with $c_n$, to construct a Young diagram with boxes in rows $0,1,2,\ldots$, starting with the empty partition, as follows: 
  \begin{itemize}
      \item If adding a box to the end of row $c_i$ results in a partition shape, then add this box and cross off $c_i$.
      \item Otherwise, increment $c_i$ by $1$ and move to the next letter $c_{i-1}$ (or the next non crossed off letter $c_j$ cyclically to the left).  
  \end{itemize}  
  The final partition shape is $\ctype(S)$.
 \end{alg}

\begin{example}
    Consider \ytableausetup{boxsize  = 1em} $S = \ytableaushort{34,125}$ with $\rw(S) = 34125$. The corresponding cocharge subscripts are given by the word $11001$. Applying Algorithm \ref{alg: ctype} results in the partition $(2,2,1)$:
         \ytableausetup{smalltableaux, centertableaux}
     {\allowdisplaybreaks\begin{align*}
          ( 1 \ 1 \ 0 \ 0 \ \mathbf{\underline{1}} \hspace{.1cm}&, \hspace{.3cm} \emptyset) \\ &\downarrow\\ 
     ( 1 \ 1 \ 0 \ \mathbf{\underline{0}} \ 2\hspace{.1cm} &, \hspace{.3cm}\emptyset \hspace{.1cm}) \\ &\downarrow\\ 
     ( 1 \ 1 \  \mathbf{\underline{0}} \ \hspace{.2cm} \ 2 \hspace{.1cm}&, \hspace{.3cm} \ytableaushort{4} \hspace{.1cm}) 
      \\ &\downarrow\\ 
     (1 \ \mathbf{\underline{1}} \  \hspace{.2cm} \ \hspace{.2cm} \ 2 \hspace{.1cm}&,\hspace{.1cm} \ytableaushort{43}\hspace{.1cm}) \\ &\downarrow\\ 
     ( \mathbf{\underline{1}} \  \hspace{.2cm}  \  \hspace{.2cm} \ \hspace{.2cm} \ 2\hspace{.1cm} &, \hspace{.3cm}\ytableaushort{2,43}\hspace{.1cm}) 
     \\ &\downarrow\\ 
     (\hspace{.2cm} \  \hspace{.2cm}  \  \hspace{.2cm} \ \hspace{.2cm} \ \mathbf{\underline{2}}  \hspace{.1cm}&,\hspace{.3cm} \ytableaushort{21,43}\hspace{.1cm}) 
 \\ &\downarrow\\ 
     (  \hspace{.2cm}  \ \hspace{.2cm}  \  \hspace{.2cm} \ \hspace{.2cm} \ \hspace{.2cm} \hspace{.1cm} &, \hspace{.3cm}\ytableaushort{5,21,43}\hspace{.1cm}) 
    \end{align*}}
    From this, we conclude $\ctype(S) = \ctype(34125) = (2,2,1)$.
\end{example}
\subsection{Garsia--Procesi modules}

Given a partition $\mu$, let $\mu' = (\mu_1',\mu_2',...,0)$ denote its transpose, padded with $0$'s to form a composition of length $n$ and set $p^n_m(\mu) := \mu_n' + \dots + \mu'_{n-m+1}$. Given a subset $Y \subset [n]$ and $d \leq |Y|$, we may define
\[
e_d(Y) := \sum_{i_i,\dots,i_d \in Y} x_{i_1}\dots x_{i_d}
\]
The \emph{Tanisaki ideal} is the ideal generated by the generalized elementary symmetric polynomials
\[ 
I_\mu := ( e_d(Y) : Y \subset [n], d > |Y| - p^n_{|Y|}(\mu)).
\]
Note that we have $\lambda \unrhd \mu$ if and only if $I_\mu \subset I_\lambda$.

The Garsia-Procesi module is defined to be the quotient
\[
R_\mu := \frac{\mathbb{C}[x_1,\dots,x_n]}{I_\mu}
.\]

The Garsia-Procesi module is well known to be isomorphic to $H^*(\mathrm{Sp}_\mu)$, the ordinary cohomology ring of the nilpotent Springer fiber $\mathrm{Sp}_\mu$. Since the ideal $I_\mu$ is $\mathfrak{S}_n$-stable, there is a natural $\mathfrak{S}_n$-action on $R_\mu$ defined by permuting the variables $x_1,\dots,x_n$, which coincides with the well-known \emph{Springer action}. The graded character was first computed by Springer \cite{Springer1978Construction} and later Garsia-Procesi \cite{GarsiaProcesi} showed the graded multiplicites are given by the \emph{Kostka-Foulkes polynomials}:
 
\[
\widetilde{K}_{\lambda\mu}(q) = \sum_{k} q^k \langle \chi^\lambda, \mathrm{char} (R_\mu)_k \rangle
\]
where $\chi^\lambda$ is the irreducible character of $\mathfrak{S}_n$ indexed by $\lambda$ and $(R_\mu)_k$ denote the degree $k$ component of $R_\mu$. The polynomials $\widetilde{K}_{\lambda\mu}(q)$ have a famous description in terms of the \emph{cocharge} statistic of Lascoux-Schutzenberger \cite{LascouxSchutzenberger1978Foulkes}.

\[
\widetilde{K}_{\lambda\mu}(q) = \sum_{T \in \SSYT(\lambda,\mu)} q^{\cc(T)}
\]
where $\SSYT(\lambda,\mu)$ is the set of set of semistandard Young tableaux with shape $\lambda$, content $\mu$. Under Lascoux standardization, the polynomials $\widetilde{K}_{\lambda\mu}(q)$ have a description in terms of catabolizability type:

\[
\widetilde{K}_{\lambda\mu}(q) = \sum_{\substack{T \in \SYT(\lambda) \\ \ctype(T) \trianglerighteq \mu}} q^{\cc(T)}
\]

There is an alternative description of $I_\mu$ in terms of harmonics and Specht polynomials. Note that we can define Specht polynomials for any filling $\widetilde{T}\in \Tab(\lambda)$ in the same way:

\[g_{\widetilde{T}}=\prod_{c\in \mathcal{C}(\widetilde{T})}\prod_{\substack{i,j\in c\\ i\text{ above }j}}(x_i-x_j).\]

We make a quick observation:
\begin{lemma}\label{lemma: col strict garnirs}
Let $\widetilde{T}\in \Tab(\lambda)$. We have $g_{\widetilde{T}} \in M_{\lambda}$.
\end{lemma}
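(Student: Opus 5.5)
The claim is that $g_{\widetilde{T}}$ lies in $M_\lambda$ for an arbitrary filling $\widetilde{T}\in\Tab(\lambda)$. By \eqref{eq: span of spechts}, $M_\lambda$ is defined as the span of $g_T$ over all $T\in\Tab(\lambda)$. So the statement follows directly from the definition, and I would make that the proof: $g_{\widetilde{T}}$ is one of the spanning vectors, hence an element of the span. The only point to check is notational. The formula for $g_{\widetilde{T}}$ in Definition \ref{def: specht} only requires $\widetilde{T}$ to be a filling of $\lambda$ by $1,\dots,n$, and the spanning set in \eqref{eq: span of spechts} ranges over exactly such fillings.

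The lemma presumably also intends the stronger content that $g_{\widetilde{T}}$ lies in the span of the \emph{standard} Specht polynomials, since the paper records that these form a basis of $M_\lambda$. I would address this in a remark with a self-contained argument, in two steps.

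First, make the columns increasing. Permuting entries within a column of $\widetilde{T}$ changes $g_{\widetilde{T}}$ only by a sign, because each column factor is a Vandermonde-type product. So we may assume $\widetilde{T}$ is column-strict.

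Second, note that $g_{\sigma T}=\sigma\, g_T$ for $\sigma\in\mathfrak{S}_n$ acting on variables. Writing $\widetilde{T}=\sigma T_0$ for any fixed standard $T_0$ gives $g_{\widetilde{T}}=\sigma g_{T_0}$. Hence $g_{\widetilde{T}}$ lies in the $\mathfrak{S}_n$-span of the $g_T$ for standard $T$. This span is the Specht module, which is spanned by its standard elements; this is the classical fact the paper quotes.

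If a more hands-on argument is preferred, I would use the Garnir straightening relations. Apply Garnir relations to a column-strict $\widetilde{T}$ that has a row descent. Each relation expresses $g_{\widetilde{T}}$ as a signed sum of $g_{T'}$, where the $T'$ are strictly larger in the column-dominance order on tabloids. By induction on this finite order, one reaches standard tableaux. The main technical step there is verifying the Garnir identity at the polynomial level: one antisymmetrizes over the set $A\cup B$, formed by the entries below the descent in the left column together with those above it in the right column. The resulting polynomial is divisible by a Vandermonde in $|A\cup B|$ variables. Its degree in those variables is too small for that, so it must vanish. This degree count is the step I would expect to take the most care.
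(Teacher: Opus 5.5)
Your proof is correct, and its main line is more direct than the paper's. Since \eqref{eq: span of spechts} defines $M_\lambda$ as the span of $g_T$ over all $T\in\Tab(\lambda)$, the lemma holds by definition. The paper instead chooses $\pi$ with $\pi\widetilde T\in\SYT(\lambda)$, writes $g_{\widetilde T}=\pi^{-1}g_{\pi\widetilde T}$, and invokes $\mathfrak{S}_n$-stability of $M_\lambda$ (``since $M_\lambda\cong V_\lambda$''). That is exactly your Step 2.

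The paper's route only has content if $M_\lambda$ is read as the span of the \textit{standard} $g_T$. In that reading, the real input is the classical theorem that this span is $\mathfrak{S}_n$-stable, i.e.\ that the standard $g_T$ span all $g_T$. That stronger form is what Corollary \ref{cor: nonzero col strict} actually needs, since Lemma \ref{thm: BG} only quantifies over standard $T$, so you were right to address it.

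Your two-step remark is not self-contained as advertised, though. The $\mathfrak{S}_n$-span of the standard $g_T$ is tautologically the span of all $g_T$. So ``spanned by its standard elements'' is the stronger claim itself, taken as a citation, and Step 1 is never used. Only the Garnir route proves it from scratch.

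In the Garnir route, the degree count has to be per variable, not total. Take $\lambda=(2,2,2)$ with the descent in the middle row. Then $|A\cup B|=4$ and the Vandermonde has degree $\binom{4}{2}=6$. But $g_{\widetilde T}$ contains monomials of degree $3$ in the two $A$-variables times degree $3$ in the two $B$-variables, so total degree gives no contradiction.

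Two arguments do work:
\begin{itemize}
\item Per-variable degree: every variable has degree at most $p-1$ in $g_{\widetilde T}$, where $p$ is the length of the left column, and antisymmetrizing over $A\cup B$ preserves this bound. A nonzero multiple of a Vandermonde in $p+1$ variables has some variable of degree at least $p$ (compare lex-leading terms).
\item Exponent pigeonhole: in every monomial of $g_{\widetilde T}$, the variables indexed by $A$ have distinct exponents in $\{0,\dots,p-1\}$, as do those indexed by $B$. Since $|A|+|B|=p+1$, some $A$-variable and some $B$-variable share an exponent, so that monomial antisymmetrizes to zero.
\end{itemize}

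Finally, the paper uses French notation. There $A$ should be the left-column entries at and above the descent, and $B$ the right-column entries at and below it; what you wrote is the English-notation description.
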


\begin{proof}
Since $M_{\lambda}\cong V_{\lambda}$, for any $T\in \SYT(\lambda)$ and permutation $\sigma$, we have that $\sigma g_T \in M_{\lambda}$.
Let $\pi$ be any permutation such that $\pi(\widetilde{T})\in \SYT(\lambda)$. Thus $g_{\widetilde{T}} = \pi^{-1} g_{\pi \widetilde{T}}\in M_{\lambda}$.
\end{proof}

Now, for any polynomial $f\in \mathbb{C}[x_1,\dots, x_n]$, let $f(\partial)$ denote the differential operator we obtain by evaluating $x_i\mapsto \partial_i$. 

\begin{lemma}[Bergeron-Garsia \cite{BergeronGarsia1992HarmonicPolynomials}]\label{thm: BG}
    Let $\mathbb{C}[\partial] := \mathbb{C}[\partial/\partial x_1,\dots,\partial/\partial_n]$ denote the algebra of operators which acts on $\mathbb{C}[\xx_n]$ by differentiation. Then, $f \in I_\mu$ if and only if $f(\partial)  g_T = 0$ for all $T \in \SYT(\mu)$.
\end{lemma}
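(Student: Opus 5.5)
The plan is to obtain the Bergeron--Garsia characterization from the classical description of $R_\mu$ as the ring of functions on a certain orbit-closure-type variety, via Macaulay inverse systems. Consider the apolar pairing $\langle f,g\rangle = f(\partial)g|_{x=0}$ on $\mathbb{C}[x_1,\ldots,x_n]$. It is a nondegenerate, $\mathfrak{S}_n$-invariant, graded pairing. Under it, for any homogeneous ideal $I$, we have $f\in I$ if and only if $f(\partial)h=0$ for all $h$ in the inverse system (harmonic space)
\[
I^\perp=\{h : g(\partial)h=0 \text{ for all } g\in I\}.
\]
This holds because $I^\perp$ is closed under differentiation and $(I^\perp)^\perp=I$ degreewise. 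So the statement reduces to identifying $I_\mu^\perp$ with the space $H_\mu$ of all partial derivatives of the Specht polynomials $g_T$, $T\in\SYT(\mu)$. By Lemma \ref{lemma: col strict garnirs} and the $\mathfrak S_n$-equivariance of differentiation, $H_\mu$ is also the derivative-closure of $M_\mu$, so it is $\mathfrak{S}_n$-stable.

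The first key step is the containment $H_\mu\subseteq I_\mu^\perp$, which amounts to showing $e_d(Y)(\partial)g_T=0$ for each generator of the Tanisaki ideal. Here I would argue directly. The polynomial $g_T$ is a product of Vandermondes in the column sets $C_1,\ldots,C_{\mu_1}$, each with $|C_k|=\mu'_k$. Restricted to the variables in $Y$, the degree of $g_T$ in those variables is at most $\sum_k \binom{|Y\cap C_k|}{2}$-type contributions. More usefully, each variable appears in the Vandermonde of its column, and a squarefree monomial $\partial_{i_1}\cdots\partial_{i_d}$ with $i$'s in $Y$ hits the Vandermonde factors. Symmetrizing over $Y$, $e_d(Y)(\partial)$ kills a product of Vandermondes whenever $d$ exceeds the maximal number of distinct variables of $Y$ that can be differentiated nontrivially. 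That number is at most $|Y|$ minus the number of columns-elements of $Y$ forced to be in the "bottom" positions, and one bounds this by $|Y|-p^n_{|Y|}(\mu)$ by a pigeonhole over the columns.

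This degree count is the step I expect to be the main obstacle. The cleanest route I would try first is to use that $e_d(Y)(\partial)$ commutes with the antisymmetrizer on each column. Applied to $g_T$, which is column-alternating, the result is column-alternating, hence divisible by each column Vandermonde restricted to the untouched structure. A degree comparison then forces it to vanish once $d>|Y|-p^n_{|Y|}(\mu)$.

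The second step is dimension equality. By Garsia--Procesi, $\dim R_\mu=\binom{n}{\mu}$, and $\dim I_\mu^\perp=\dim R_\mu$ since the pairing is nondegenerate degreewise. For $H_\mu$ I would invoke the known fact (from Bergeron--Garsia, or De Concini--Procesi) that the derivative-closure of $M_\mu$ has dimension $\binom{n}{\mu}$. Alternatively, I would show the reverse inclusion by noting that $I_\mu\supseteq H_\mu^\perp$ fails to be larger by comparing Hilbert series: both have top degree $n(\mu)$, where $M_\mu$ sits, and the graded characters agree via $\widetilde K_{\lambda\mu}(q)$. With $H_\mu\subseteq I_\mu^\perp$ and equal dimensions, we get $H_\mu=I_\mu^\perp$. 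Then $f\in I_\mu$ iff $f(\partial)$ kills all of $H_\mu$, iff $f(\partial)g_T=0$ for all $T\in\SYT(\mu)$, since $f(\partial)$ annihilates $g_T$ exactly when it annihilates all its derivatives, the operators commuting.
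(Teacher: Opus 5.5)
Your reduction is sound. Because $I_\mu$ is homogeneous, $f\in I_\mu$ iff $f(\partial)$ annihilates $I_\mu^\perp$. Also, $f(\partial)$ kills every $g_T$ iff it kills their derivative closure $H_\mu$. So the lemma is equivalent to $H_\mu=I_\mu^\perp$. The paper gives no proof; it cites Bergeron--Garsia, and in Corollary~\ref{cor: nonzero col strict} it uses only the forward direction $H_\mu\subseteq I_\mu^\perp$. However, your sketch does not establish either inclusion.

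\textbf{The inclusion $H_\mu\subseteq I_\mu^\perp$.} Write $\Delta(A)$ for the Vandermonde in the variables indexed by $A$, and $C_k$ for the column sets of $T$. Both mechanisms you propose fail.
\begin{enumerate}
\item Vanishing is not term-by-term. For $\mu=(1^n)$ and $Y=[n]$ you need $e_1(\partial)\Delta([n])=0$, even though each single $\partial_i$ acts nontrivially on $\Delta([n])$. So "$d$ exceeds the number of variables that can be differentiated nontrivially" is the wrong criterion.
\item $e_d(Y)(\partial)$ commutes only with the column permutations that stabilize $Y$. The output is therefore alternating only in each $Y\cap C_k$ and each $C_k\setminus Y$ separately. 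The resulting degree count forces vanishing only when $d>\sum_k |Y\cap C_k|\,|C_k\setminus Y|$, which is too weak.
\end{enumerate}
For example, take $\mu=(3,3,1,1,1)$ with columns $\{1,\dots,5\},\{6,7\},\{8,9\}$ and $Y=\{1,2,3,6,7,8,9\}$. Then $e_6(Y)$ is a Tanisaki generator, since $|Y|-p^9_7(\mu)=5$. Yet $e_6(Y)(\partial)g_T$ and the forced divisor $\Delta(\{1,2,3\})\Delta(\{4,5\})\Delta(\{6,7\})\Delta(\{8,9\})$ both have degree $6$. The count is inconclusive, even though the polynomial is in fact zero.

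The missing step is as follows.
\begin{itemize}
\item Expand $e_d(Y)=\sum_{d_1+\cdots+d_{\mu_1}=d}\prod_k e_{d_k}(Y\cap C_k)$.
\item For a column $C_k\subseteq Y$ and $j\ge 1$, the polynomial $e_j(C_k)(\partial)\Delta(C_k)$ is alternating in $C_k$ of degree less than $\binom{|C_k|}{2}$, hence zero.
\item So every surviving term has $d\le |Y|-\sum_{C_k\subseteq Y}|C_k|$.
\item Since $[n]\setminus Y$ meets at most $n-|Y|$ columns, $\sum_{C_k\subseteq Y}|C_k|\ge \mu'_{n-|Y|+1}+\cdots+\mu'_{\mu_1}=p^n_{|Y|}(\mu)$.
\end{itemize}
This is the pigeonhole you were reaching for. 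It rests on the cancellation $e_j(C_k)(\partial)\Delta(C_k)=0$, not on counting variables.

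\textbf{The inclusion $I_\mu^\perp\subseteq H_\mu$.} This is the actual content of the Bergeron--Garsia theorem, and here your argument is circular.
\begin{itemize}
\item The ``known fact'' that the derivative closure of $M_\mu$ has dimension $n!/\prod_i\mu_i!$ is the theorem itself. Given the first inclusion, it is equivalent to the socle of $R_\mu$ being concentrated in degree $n(\mu)$.
\item The Hilbert-series alternative does not help. Knowing $H_\mu\subseteq I_\mu^\perp$, with both having top component $M_\mu$ in degree $n(\mu)$, says nothing about the lower graded pieces of $H_\mu$. That its graded character is $\sum_\lambda \widetilde K_{\lambda\mu}(q)\chi^\lambda$ is exactly what must be shown.
\end{itemize}
What is needed is an independent lower bound $\dim H_\mu\ge n!/\prod_i\mu_i!$, and your proposal does not supply one.
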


In particular, we have the following corollary using  Lemma \ref{lemma: col strict garnirs}.
\begin{corollary}\label{cor: nonzero col strict}
    If $f(\partial)g_{\widetilde{T} }\neq 0$ for some filling $\widetilde{T}\in\Tab(\lambda)$, then $f\not\in I_\lambda$.  
\end{corollary}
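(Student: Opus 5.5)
This follows by combining Lemma \ref{lemma: col strict garnirs} with Lemma \ref{thm: BG}; I would argue by contraposition. Suppose $f\in I_\lambda$, and let $\widetilde{T}\in\Tab(\lambda)$ be an arbitrary filling. The aim is to show $f(\partial)g_{\widetilde{T}}=0$.

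\textbf{Step 1.} By Lemma \ref{lemma: col strict garnirs}, $g_{\widetilde{T}}$ lies in the Specht module $M_\lambda$. The standard Specht polynomials $\{g_T : T\in\SYT(\lambda)\}$ form a basis of $M_\lambda$. Hence we can write
\[
g_{\widetilde{T}}=\sum_{T\in\SYT(\lambda)} c_T\, g_T
\]
for some scalars $c_T\in\mathbb{C}$.

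\textbf{Step 2.} The operator $f(\partial)$ is linear, so
\[
f(\partial)g_{\widetilde{T}}=\sum_{T\in\SYT(\lambda)} c_T\, f(\partial)g_T .
\]

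\textbf{Step 3.} Apply Lemma \ref{thm: BG} with $\mu=\lambda$. Since $f\in I_\lambda$, it gives $f(\partial)g_T=0$ for every $T\in\SYT(\lambda)$. Every term in the sum from Step 2 therefore vanishes, so $f(\partial)g_{\widetilde{T}}=0$.

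Because $\widetilde{T}$ was arbitrary, this contradicts the hypothesis that $f(\partial)g_{\widetilde{T}}\neq 0$ for some filling. Hence $f\notin I_\lambda$.

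No step here is a real obstacle. The only point needing care is the bookkeeping in Step 3: Lemma \ref{thm: BG} is stated for $I_\mu$ and $\SYT(\mu)$, and it must be applied with $\mu=\lambda$, the shape of the filling $\widetilde{T}$. With that specialization, the result is a direct consequence of linearity.
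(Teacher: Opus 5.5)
Your proof is correct and follows essentially the argument the paper intends, though the paper only cites Lemma~\ref{lemma: col strict garnirs} and Lemma~\ref{thm: BG} without writing out a proof. Your contrapositive spells out that citation: $g_{\widetilde{T}}$ lies in $M_\lambda$, the standard $g_T$ span $M_\lambda$, and by linearity together with Lemma~\ref{thm: BG} at $\mu=\lambda$, every $f\in I_\lambda$ gives $f(\partial)g_{\widetilde{T}}=0$.
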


\subsection{Two conjectural higher Specht bases} We now recall the definitions of $\BBB_\mu$ and $\BBB_\mu'$.  
The key construction for the Chou--Hanada conjectural basis $\BBB_\mu$ is the following.

\begin{definition}
   Define $\SYT^{\unrhd \mu}=\{S\in \SYT_n: \ctype(S)\unrhd \mu\}$ to be the set of all standard Young tableaux whose catabolizability type weakly dominates $\mu$.
\end{definition}

Using the inclusion $\SYT^{\unrhd \mu}\subseteq \SYT_n$, we can identify a subset of the higher Specht basis of the coinvariant ring $R_n$.
\begin{conjecture}[\cite{ChouHanada}]\label{conj: new higher specht}
Let $\BBB_\mu:=\{F_T^S: S\in \SYT^{\unrhd \mu}, \sh(S) = \sh(T)\}$ be the subset of the higher Specht basis of $R_n$ indexed by $S\in \SYT^{\unrhd\mu}$.
The set $\BBB_\mu$ is a higher Specht basis of $R_\mu$.
\end{conjecture}

On the other hand, the Gillespie--Rhoades construction is in terms of semistandard Young tableaux of a given content $\mu$, which we denote $\SSYT_\mu := \bigsqcup_{\lambda} \SSYT(\lambda,\mu)$.

\begin{proposition}[\cite{Lascoux}] \label{prop: lascoux}
    For any $\mu\vdash n$, we have a bijection 
    $\phi: \SSYT_\mu\rightarrow \SYT^{\unrhd \mu}$
    which preserves cocharge and shape.
\end{proposition}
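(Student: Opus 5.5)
The plan is to obtain $\phi$ from a graded counting argument. We then indicate how to make it explicit, since the counting route depends on the catabolizability formula, which in the literature is itself derived from such a standardization.

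\emph{Step 1 (existence by counting).} Fix $\lambda\vdash n$ and $k\ge 0$. The paper recalls two formulas for the Kostka--Foulkes polynomial:
$$\widetilde{K}_{\lambda\mu}(q)=\sum_{T\in\SSYT(\lambda,\mu)}q^{\cc(T)}=\sum_{\substack{T\in\SYT(\lambda)\\ \ctype(T)\unrhd\mu}}q^{\cc(T)}.$$
Taking the coefficient of $q^k$ in both gives
$$\#\{T\in\SSYT(\lambda,\mu):\cc(T)=k\}=\#\{S\in\SYT(\lambda)\cap\SYT^{\unrhd\mu}:\cc(S)=k\}.$$
Choose any bijection between these two finite sets for each pair $(\lambda,k)$. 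The union over all $\lambda$ and $k$ is then a bijection $\phi:\SSYT_\mu\to\SYT^{\unrhd\mu}$ that preserves shape and cocharge. This proves the statement as written, taking both expansions of $\widetilde K_{\lambda\mu}$ as known.

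\emph{Step 2 (explicit map).} To avoid the circularity, I would define $\phi(T)$ by Lascoux standardization via cocharge subwords. Decompose $\rw(T)$ into its cocharge subwords $z^{(1)},\dots,z^{(\mu_1)}$. Relabel each letter $a$ of $\rw(T)$ by a standard value, with ties among equal letters broken by the subword index. The tie-breaking order is chosen so that, on each subword, the cocharge labels of the standardized word restrict to those of $z^{(j)}$.

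The key input here is Lemma \ref{lem:greedy}. It says that for each $a$ the labels satisfy $c^{(1)}_a\ge c^{(2)}_a\ge\cdots$, and that when two labels are equal the copy from the earlier subword lies further right. This lets us check, one letter at a time, that the standardized word's cocharge scan visits the copies of $a$ in an order that does not increment the label between them. Hence cocharge is preserved. Shape is preserved because the standardization respects the relative order of equal letters used by insertion, so the reading word still Knuth-rectifies to the same shape.

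\emph{Main obstacle.} The hard part is showing that the image is exactly $\SYT^{\unrhd\mu}$, meaning that $\ctype(\phi(T))\unrhd\mu$ and that $\phi$ is surjective onto that set. I would first run Algorithm \ref{alg: ctype} on the cocharge word of $\phi(T)$. The content-$\mu$ structure, with subword $j$ having length $\mu'_j$, should force the growing diagram to dominate $\mu$. Surjectivity would then follow from injectivity together with the cardinality equality of Step 1. If the direct analysis of Blasiak's algorithm becomes unwieldy, I would fall back on Step 1 together with the citation to \cite{Lascoux}.
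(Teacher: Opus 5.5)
Your Step~1 is a valid proof of the statement as written. The catabolizability expansion of $\widetilde{K}_{\lambda\mu}(q)$ recalled in the paper gives equal cardinalities for each shape $\lambda$ and cocharge $k$, and any fiberwise bijection will do. The paper gives no argument of its own: it cites Lascoux, with the explicit map via cyclage-poset embeddings in \cite{SW}. The expansion you use is the numerical form of that same result (the paper introduces it as holding ``under Lascoux standardization''). So Step~1 restates the citation rather than proving it independently. That is still all the paper needs, since the proposition is only used in Lemma~\ref{lem:3}, and there only the equality $|\SSYT_\mu|=|\SYT^{\unrhd\mu}|$ matters.

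Step~2, which was meant to remove this dependence, fails.
\begin{itemize}
\item \textbf{Label monotonicity.} In a permutation the cocharge label is weakly increasing in the value, since the label of $i+1$ is that of $i$ or one more. Any standardization of $\rw(T)$ keeps distinct letters in order, so every copy of $a$ would get a label at most that of every copy of $a+1$. Lemma~\ref{lem:greedy} compares only copies of the same letter, and says nothing about this cross-letter condition.
\item \textbf{A counterexample from the paper.} The cross-letter condition fails in the paper's own $\mu=(2,2,2,2)$ example. Take $T$ with rows $4$; $2\,4$; $1\,1\,2\,3\,3$ (top to bottom), so $\rw(T)=4\,2\,4\,1\,1\,2\,3\,3$. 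The cocharge subwords occupy positions $\{2,3,5,8\}$ and $\{1,4,6,7\}$. The $2$ in position $2$ has label $1$, while the $3$ in position $7$ has label $0$. So no tie-breaking rule makes the standardized labels restrict to those of the $z^{(j)}$.
\item \textbf{Even the three-row case.} The paper's map $f(T)=\std(\cc(T))$ is not a standardization of $T$: in Example~\ref{ex:3-row}, a bottom-row $3$ becomes $8$ while the middle-row $2$s become $9,10,11$.
\item \textbf{Shape.} Compatibility of standardization with insertion holds for left-to-right tie-breaking, not for arbitrary tie-breaking; for instance $1\,1\mapsto 2\,1$ changes the shape. Ordering the copies of a letter by increasing label is not left-to-right order: above, the label-$0$ copy of $2$ lies to the right of the label-$1$ copy.
\item \textbf{Circularity.} Your surjectivity step reuses the count from Step~1, so the dependence would not be removed even if Step~2 were repaired. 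The dominance $\ctype(\phi(T))\unrhd\mu$ is also left unproved.
\end{itemize}
Keep Step~1 together with the citation, and either drop Step~2 or replace it by Lascoux's cyclage embedding.
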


The bijection in Proposition \ref{prop: lascoux} is in terms of embeddings for cyclage posets: for an explicit description of the bijection, see \cite{SW}.

\begin{conjecture}[\cite{GillespieRhoades}]\label{conj:GillespieRhoades}
    For any pair $(T,S)$ where $S\in \SSYT_{\mu}$ and $T$ is any standard Young tableau of the same shape as $S$, define $$F_T^S=\varepsilon_T \xx_T^{\cc(S)}.$$  Then the set $\BBB'_\mu$ of all such polynomials $F_T^S$ is a higher Specht basis of $R_\mu$.
\end{conjecture}

\subsection{Counterexample for Gillespie-Rhoades}  \label{sec:counterexample}

We recall the example from \cite{ChouHanada} that shows that the conjectured basis $\BBB_\mu'$ is not a basis; in fact, the constructed polynomials are not even always nonzero, so Conjecture \ref{conj:GillespieRhoades} is false.  As an example, for $\mu=(2,2,2,2,2)$, consider
\[\raisebox{0.7cm}{$S =$} \,\,\,\,\young(5,3,2345,1124)\hspace{1cm} \raisebox{0.7cm}{$\cc(S)=$} \,\,\,\, \young(2,2,1113,0002)\]
For any choice of $T$,  the terms in $F_T^S = \varepsilon_T x_T^{\cc(S)}$ will cancel in pairs, by applying the transposition that swaps the two top entries in the first column (which have the same exponent from $\cc(S)$). Thus $F_T^S=0$ identically.

\subsection{Equivalence with Gillespie-Rhoades construction for three rows }\label{sec:equivalence}

We now show that the two basis constructions agree for three row shapes $\mu$.   To do so, it suffices to construct a cocharge tableau preserving bijection from the set $\SSYT_\mu$ of semistandard Young tableaux of content $\mu=(\mu_1,\mu_2,\mu_3)$ to the set $\SYT^{\unrhd\mu}$ of standard Young tableaux whose catabolizability type dominates $\mu$.  We define $f:\SSYT_\mu\to \SYT^{\unrhd \mu}$ by $$f(T)=\std(\cc(T)),$$ 
where $\mathrm{std}$ is the classical standardization map on semistandard tableaux (i.e, we standardize letters from left to right). We show this is a well defined bijection that preserves the cocharge tableau.

\begin{remark}
 One can check that the map $f$ in fact agrees with the Lascoux bijection mentioned in Proposition \ref{prop: lascoux}, for three row shapes
\end{remark}

\begin{example}\label{ex:3-row}
    The map $f$ in the proof above maps the tableau at left to the tableau at right, and both have the same cocharge tableau shown in the middle:

$$\young(3,2223,1111122333) \hspace{1cm} \young(2,1111,0000000011) \hspace{1cm}\young({{15}},9{{10}}{{11}}{{12}},12345678{{13}}{{14}})$$
\end{example}

\begin{lemma}\label{lem:1}
    The map $f$ is well defined.
\end{lemma}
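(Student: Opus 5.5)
We must check two things about $f(T)=\std(\cc(T))$ for $T\in\SSYT(\lambda,\mu)$ with $\mu=(\mu_1,\mu_2,\mu_3)$. First, $\cc(T)$ must be a semistandard filling of $\lambda$, so that $\std(\cc(T))$ is a standard Young tableau. Second, the resulting tableau must lie in $\SYT^{\unrhd\mu}$, i.e.\ $\ctype(\std(\cc(T)))\unrhd\mu$. Along the way I would also show that $\cc(\std(\cc(T)))=\cc(T)$, since this is the cocharge-preserving property needed later.

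\textbf{Step 1: compute the labels explicitly.} Since $T$ has content in $\{1,2,3\}$ and at most three rows, it has a rigid form. The bottom row is $1^{\mu_1}2^{a}3^{b}$, the second row is $2^{c}3^{d}$, and the third row is $3^{e}$. The reading word (top row first, in French notation) is $3^e\,2^c3^d\,1^{\mu_1}2^a3^b$.

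Running the cocharge subword algorithm, every $1$ gets label $0$. Each subword then picks the rightmost unused $2$ to the left of its $1$, if one exists, which is a second-row $2$. Hence the second-row $2$'s get label $1$ and the first-row $2$'s get label $0$.

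For the $3$'s, I would do a similar but slightly longer case analysis, comparing the number of second-row $2$'s with the number of third-row $3$'s. The expected outcome is:
\begin{itemize}
\item third-row $3$'s get label $2$;
\item second-row $3$'s get label $1$ or $2$;
\item first-row $3$'s get label $0$ or $1$.
\end{itemize}
To see that these labels are weakly increasing from left to right within each row, I would invoke Lemma~\ref{lem:greedy}. Earlier subwords carry weakly larger labels, and on ties they sit further right. Since subwords consume the copies of a letter from right to left, this ordering is exactly what gives weak increase along rows.

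\textbf{Step 2: column strictness.} The only vertical pairs are:
\begin{itemize}
\item a $1$ (label $0$) below anything, which has positive label;
\item a first-row $2$ (label $0$) below a second-row $3$ (label $\ge1$);
\item a second-row $2$ (label $1$) below a third-row $3$ (label $2$).
\end{itemize}
A first-row $3$ has nothing above it, by semistandardness of $T$. Hence $\cc(T)$ is semistandard, and $\std(\cc(T))$ is a standard Young tableau of shape $\lambda$.

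Next I would check that the cocharge labels of the reading word of $\std(\cc(T))$ are exactly the entries of $\cc(T)$. The standardization numbers the label-$0$ boxes first, left to right, then the label-$1$ boxes, and so on. Scanning the reading word from left to right, the value $k$ is therefore encountered with one ``return to the left end'' per increase in label. This proves $\cc(f(T))=\cc(T)$.

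\textbf{Step 3: catabolizability type (main obstacle).} I expect the hardest step to be showing $\ctype(f(T))\unrhd\mu$. I would apply Algorithm~\ref{alg: ctype} directly to the label word, which is now explicit: $2^e$, then a block of $1$'s and $2$'s, then $0^{\mu_1}$, then labels $0/1$ from the bottom row. I would track the resulting Young diagram by rows $0,1,2$. The aim is to show that row $0$ receives at least $\mu_1$ boxes, and rows $0$ and $1$ together receive at least $\mu_1+\mu_2$ boxes, which gives dominance.

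Heuristically, the $\mu_1$ zeros coming from the $1$'s are crossed off into row $0$ on the first pass. Each label-$1$ letter then lands in row $1$ unless blocked, in which case it is bumped to label $2$. The number of letters that end up in row $2$ is bounded by the number of letters carrying label $2$ or bumped to $2$, which should be at most $\mu_3$ by the subword structure. If the direct analysis becomes messy, my fallback would be to verify that $f$ agrees with the Lascoux bijection $\phi$ of Proposition~\ref{prop: lascoux}, which lands in $\SYT^{\unrhd\mu}$ by definition.
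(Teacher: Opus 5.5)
Your Steps 1 and 2 take the same route as the paper. You read off the cocharge labels from the rigid form of $T$ (bottom row $1^{\mu_1}2^a3^b$, second row $2^c3^d$, top row $3^e$) and then check column strictness. Two corrections are needed there.

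First, the labels are more rigid than you claim. A second-row $3$ always gets label $1$: the $2$ preceding it in its subword is either a second-row $2$ (label $1$) lying to its left in the reading word, or a first-row $2$ (label $0$) lying to its right. So $\cc(T)$ has top row all $2$, second row all $1$, and bottom row $0^k1^{\ell}$. Here $k\ge\mu_1+a$, and $\ell\le c-e$, since a first-row $3$ gets label $1$ only when its subword's $2$ is a second-row $2$ whose subword did not take a top-row $3$.

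Second, your appeal to Lemma~\ref{lem:greedy} rests on ``subwords consume the copies of a letter from right to left,'' which is false in general. For the tableau with bottom row $112$ and top row $2$ (reading word $2112$), the first subword takes the left $2$. The claim holds only within a contiguous block of equal letters. Since you compute every label explicitly anyway, you do not need Lemma~\ref{lem:greedy} at all.

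The genuine gap is Step 3, which is the real content of $f(T)\in\SYT^{\unrhd\mu}$. You offer only a heuristic ending in ``should be at most $\mu_3$.'' The Lascoux-bijection fallback is not a proof as it stands: it needs the explicit cyclage-embedding description, and the paper itself only remarks on this agreement. In particular, your bound on letters reaching row $2$ requires ruling out that a second-row $2$ is blocked and bumped, which you never do.

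With the exact form of $\cc(T)$, Algorithm~\ref{alg: ctype} can simply be run:
\begin{itemize}
\item The $\ell$ bottom-row $1$s are read first and bumped to $2$, since row $0$ is empty.
\item The $k$ zeros fill row $0$.
\item The $\lambda_2=c+d$ second-row $1$s all fit in row $1$. Column strictness of $T$ puts every second-row box over a $1$ or a first-row $2$, whence $\lambda_2\le\mu_1+a\le k$.
\item The $e$ top-row $2$s and then the $\ell$ bumped letters all fit in row $2$, since $e+\ell\le c\le\lambda_2$.
\end{itemize}
Hence $\ctype(f(T))=(k,\lambda_2,e+\ell)$, with $k\ge\mu_1$ and $k+\lambda_2\ge\mu_1+a+c=\mu_1+\mu_2$.

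Dominance over a three-part $\mu$ also requires $\ctype(f(T))$ to have at most three parts. The bound $e+\ell\le\lambda_2$ is what secures this, and both your ``row $2$'' formulation and the paper's proof leave this check implicit.

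Finally, this computation uses that the label word of $\rw(f(T))$ coincides with $\cc(T)$. Your argument for $\cc(f(T))=\cc(T)$ should say why each change of label forces a return to the left end. Here that is immediate: the label-$1$ strip begins in the second row and the label-$2$ strip is the top row, so each is read before the previous strip ends.
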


\begin{proof}
  Let $T\in \SSYT_\mu$ where $\mu$ has three rows, and suppose $T$ has shape $\lambda$.  We first show that $\cc(T)$ is semistandard.  Let $m_i(r)$ be the number of $i$'s in row $r$.  For instance, $m_1(1)$ is the number of $1$s on the bottom row.  Then by semistandardness, we have $m_1(1)=\mu_1$ and $m_1(2)=m_1(3)=0$.  We also have $m_2(1)+m_2(2)=\mu_2$,  $m_2(3)=0$, and $m_3(3)\le m_2(2)\le m_1(1)$.  
    
    Thus when we compute $\cc(T)$, the 3s on the top row have cocharge label 2, and the 2s in the second row have cocharge label 1.  For a 3 on the second row, whether the 2 that preceded it in its cocharge word was in the first row (with cocharge 0) or the second row (with cocharge 1), the 3 will have a cocharge label of 1.  Thus the entire second row of $\cc(T)$ is labeled $1$, and the entire top row is $2$. (See Example \ref{ex:3-row}.) 

    On the bottom row, all the $2$s on the bottom row have cocharge $0$ since they follow a $1$ to their left, and the $3$s have cocharge $1$ if and only if they follow a $2$ from the middle row.  This happens if and only if the $2$ did not precede a $3$ from the top row, so the number of cocharge labels $1$ at the end of the bottom row of $\cc(T)$ is $\min(m_1(3),m_2(2)-m_3(3))$. In particular, $\cc(T)$ is semistandard.  It follows that $f(T)=\std(\cc(T))$ is a well defined tableau.
    
    We finally show $f(T)\in \SYT^{\unrhd \mu}$.  Let $k$ be the number of $0$s in the bottom row of $\cc(T)$.  Then $f(T)=\std(\cc(T))$ has $1,\ldots,k$ in the bottom row, then $k+1,\ldots,k+j$ in the horizontal strip consisting of the second row and the rest of the entries on the bottom row, and finally $k+j+1,\ldots,n$ along the top row.      Notice that $\cc(f(T))=\cc(T)$ as well.  So, when we compute $\ctype(f(T))$ by Algorithm \ref{alg: ctype}, we first increment the $1s$ from the bottom row and then place exactly $k$ boxes in row $0$.  Then we place $\lambda_2$ boxes in the second row when we read the corresponding $1$s, and continue from there.  Thus the first part of $\ctype(f(T))$ is at least $k\ge \mu_1$, and the first two parts have total size at least $k+\lambda_2\ge \mu_1+\mu_2$, so $\ctype(f(T))\unrhd\mu$.
\end{proof}

\begin{lemma}\label{lem:2}
    The map $f$ preserves cocharge tableaux.
\end{lemma}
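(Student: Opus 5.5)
We need to show that $\cc(f(T))=\cc(T)$, where $f(T)=\std(\cc(T))$. The plan is to compute the cocharge labels of the reading word of $f(T)$ directly from the explicit description of $f(T)$ in the proof of Lemma \ref{lem:1}, and check that they match the labels of $\cc(T)$ box by box.

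By Lemma \ref{lem:1}, $\cc(T)$ is semistandard with a rigid structure. The bottom row is $k$ zeros followed by $\ell=\min(m_1(3),m_2(2)-m_3(3))$ ones. The entire second row is ones, and the entire top row is twos. Standardizing from left to right, $f(T)$ therefore has:
\begin{itemize}
\item $1,\ldots,k$ in the first $k$ boxes of the bottom row;
\item $k+1,\ldots,k+\lambda_2$ along the second row, followed by $k+\lambda_2+1,\ldots,k+\lambda_2+\ell$ in the last $\ell$ boxes of the bottom row (reading order is left to right, and the second row is read before the bottom row);
\item the remaining entries $k+\lambda_2+\ell+1,\ldots,n$ along the top row.
\end{itemize}

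The reading word of $f(T)$ is then: the top-row entries, then the second-row entries, then the bottom row $1,\ldots,k,\,k+\lambda_2+1,\ldots,k+\lambda_2+\ell$. Since $f(T)$ is standard, we run the permutation cocharge labeling, where each letter $i+1$ gets the same label as $i$ if it lies to the right of $i$, and the label plus one otherwise.
\begin{itemize}
\item Letters $1,\ldots,k$ appear left to right in the bottom row, so all get label $0$.
\item Letter $k+1$ is the first entry of the second row, which is read before the bottom row and so lies to the left of $k$. It gets label $1$, and the rest of the second row follows in increasing left-to-right order, also with label $1$.
\item Letter $k+\lambda_2+1$ is in the bottom row, to the right of the second row, so it keeps label $1$, as do the subsequent $\ell$ bottom-row entries.
\item The first top-row entry lies to the left of everything, so it gets label $2$, and the remaining top-row entries increase left to right and keep label $2$.
\end{itemize}

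The resulting labels are exactly $0^k1^\ell$ on the bottom row, $1$ on the second row, and $2$ on the top row. This is $\cc(T)$ as computed in Lemma \ref{lem:1}. Note that the second row of $f(T)$ may be empty (when $\lambda_2=0$), and likewise the top row or the tail of $\ell$ ones; these cases need to be handled in the argument above.

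The main obstacle is confirming that the description of $\cc(T)$ used in Lemma \ref{lem:1} is exactly correct. In particular, the first $k$ bottom-row labels must be $0$ and the trailing ones must number $\ell$, including the claim that all the $2$s in the bottom row precede the $3$s labeled $1$. This identifies the two label fillings position by position rather than merely as multisets. If the subword decomposition of $T$ is delicate, I would rely directly on the case analysis already carried out in Lemma \ref{lem:1} rather than recomputing it.
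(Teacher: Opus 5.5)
Your proof is correct and follows the paper's argument. You standardize the horizontal strips of $0$s, $1$s and $2$s of $\cc(T)$ in reading order, and check that the cocharge algorithm keeps the label constant within each strip and increments it exactly where each new strip begins, which happens because that strip starts to the left of where the previous one ended. The paper justifies these increments by a general property of cocharge labelings (the first label $i+1$ precedes the last label $i$ in reading order), whereas you read them off the explicit three-row form of $\cc(T)$ from Lemma \ref{lem:1}; the edge cases you flag are harmless, since $\ell=0$ whenever $\lambda_2=0$ and a nonempty top row forces $\lambda_2\geq 1$.
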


\begin{proof}
    In the above proof, we saw that for $T\in \SSYT_\mu$, the cocharge tableau $\cc(T)$ is semistandard.  Moreover, since it is a cocharge tableau, we cannot have all $i+1$s occurring after all $i$s in reading order for any $i$, because in the first cocharge subword of $T$, the first cocharge label $i+1$ in the subword must precede the last $i$ in the subword, in reading order in $T$.
    
    Thus, when we take its standardization to form $f(T)=\std(\cc(T))$, we replace the horizontal strip of $0$s with $1,2,3,\ldots,k$ in reading order, then the horizontal strip of $1$s with $k+1,\ldots,k+\ell$, and so on, where the $k+1$ will be to the left of $k$ in reading order, and the $k+\ell+1$ will be left of $k+\ell$, and so on.  Thus when we take the cocharge tableau of the resulting standard tableau, we exactly recover $\cc(T)$ by the cocharge algorithm.  
\end{proof}

\begin{lemma}\label{lem:3}
    The map $f$ is a bijection.
\end{lemma}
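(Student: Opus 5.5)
The plan is to combine a counting argument with injectivity. By Proposition \ref{prop: lascoux}, there is a bijection $\SSYT_\mu\to\SYT^{\unrhd\mu}$ that preserves shape, so for each shape $\lambda$ the finite sets $\SSYT(\lambda,\mu)$ and $\{S\in \SYT^{\unrhd\mu}:\sh(S)=\lambda\}$ have the same cardinality. The map $f=\std\circ\cc$ also preserves shape, and by Lemma \ref{lem:1} it lands in $\SYT^{\unrhd\mu}$. So it suffices to show that $f$ is injective on $\SSYT(\lambda,\mu)$ for each fixed $\lambda$.

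Next I will reduce injectivity to a statement about cocharge tableaux. By Lemma \ref{lem:2}, $\cc(f(T))=\cc(T)$, so $f(T)=f(T')$ forces $\cc(T)=\cc(T')$. It therefore suffices to show that $T\in\SSYT(\lambda,\mu)$ is determined by its cocharge tableau $\cc(T)$.

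From the proof of Lemma \ref{lem:1}, $\cc(T)$ has a fixed form: the top row is all $2$s, the second row is all $1$s, and the bottom row is $0$s followed by exactly
\[
c=\min\bigl(m_3(1),\,m_2(2)-m_3(3)\bigr)
\]
entries equal to $1$. So for fixed $\lambda$, the tableau $\cc(T)$ is determined by the single integer $c$.

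On the other side, $T$ is determined by the multiplicities $m_i(r)$, since a semistandard tableau is determined by the content of each row. I will check that these multiplicities depend on one free parameter, which I take to be $t=m_3(1)$. We have $m_1(1)=\mu_1$ and $m_3(3)=\lambda_3$. Using $m_3(1)+m_3(2)+m_3(3)=\mu_3$, $m_2(2)+m_3(2)=\lambda_2$ and $m_2(1)+m_2(2)=\mu_2$, we get
\[
m_3(2)=\mu_3-\lambda_3-t,\qquad m_2(2)=\lambda_2-\mu_3+\lambda_3+t,\qquad m_2(1)=\mu_2-m_2(2).
\]
Hence
\[
c=\min\bigl(t,\;t+(\lambda_2-\mu_3)\bigr)=t+\min(0,\lambda_2-\mu_3),
\]
which is a strictly increasing function of $t$ once $\lambda$ and $\mu$ are fixed. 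Therefore $c$ determines $t$, hence all the multiplicities, hence $T$. This gives injectivity, and with the cardinality equality it gives bijectivity.

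The main obstacle is making sure the formula for $c$ from Lemma \ref{lem:1} is used correctly, including the edge case $\lambda_3=0$. I also need to confirm that the counting equality really holds shape by shape; it does, since the Lascoux bijection preserves shape. If one wants to avoid invoking Proposition \ref{prop: lascoux}, the alternative is to construct the inverse directly. Given $S\in\SYT^{\unrhd\mu}$ of shape $\lambda$, one would first show that $\cc(S)$ has the same three-row form, using the catabolizability condition $\ctype(S)\unrhd\mu$ to bound the number of bottom-row $1$s. One would then read off $t$ and check that the resulting multiplicities satisfy the semistandardness inequalities. This direct route is messier, so I would use the counting argument.
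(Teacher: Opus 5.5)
Your proof is correct and follows the paper's argument: it uses the same reduction via Lemma \ref{lem:2} to injectivity of $\cc$ on $\SSYT_\mu$, and the same cardinality count from the Lascoux bijection (Proposition \ref{prop: lascoux}). The only difference is how you show that $T$ is recovered from $\cc(T)$. You parametrize $\SSYT(\lambda,\mu)$ by $t=m_3(1)$ and note that the number of bottom-row $1$s is $c=t+\min(0,\lambda_2-\mu_3)$. This is a cleaner, fully explicit version of the paper's greedy reconstruction, and you rightly read the paper's $\min(m_1(3),\dots)$ as $\min(m_3(1),\dots)$.
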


\begin{proof}
To show that $f$ is injective, first note that by Lemma \ref{lem:2}, we cannot have $f(T)=f(T')$ and $\cc(T)\neq \cc(T')$, since $f$ preserves cocharge tableau.  Thus it suffices to show that $\cc$ is injective on $\SSYT_\mu$, that is, that $T\in \SSYT_\mu$ is determined by its cocharge tableau, for $\mu$ having three rows.

    With the same notation for $T$ and its multiplicities above, let $T'\in \SSYT_\mu$ and suppose $\cc(T')=\cc(T)$.  Then $T'$ has all $\mu_1$ of its $1$s in the bottom row by semistandardness, its top row has all $3$s, the boxes directly below these $3$s are filled with $2$, and all of the $\lambda_1-k$ boxes in the bottom row which have $1$ in $\cc(T)$ must be filled with $3$ in $T'$ in order to attain the subscripts $1$ in $\cc(T')$.  Moreover, we must place at least $\lambda_1-k$ extra $2$s in the second row in order to ensure the $3$s do have cocharge value $1$. 
    
    For the remaining $2$s and $3$s, note that if we place another $3$ in the bottom row and another $2$ in the top, we will create an extra $1$ in the cocharge tableau, a contradiction.  Thus we must place as many $2$s as possible in the bottom row, either finishing the labeling in the bottom row or running out of $2$s, whichever comes first, and then the remaining $3$s must be placed in the remaining squares.  Thus $T'=T$ as the cocharge tableau has completely determined the tableau it came from.

    Since $f$ is injective and the sets have the same size by the Lascoux bijection, $f$ is bijective, as desired.  Since it preserves the cocharge tableau, the set of higher Specht bases are identical as well.
\end{proof}

Lemmas \ref{lem:1}, \ref{lem:2}, and \ref{lem:3} combine to yield our result.

\begin{corollary}
    The construction $\BBB'_\mu$ of Gillespie--Rhoades coincides with $\BBB_\mu$ for $\mu$ having three rows.
\end{corollary}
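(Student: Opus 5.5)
The corollary follows almost formally from Lemmas \ref{lem:1}, \ref{lem:2}, and \ref{lem:3}. The plan is to show that the two index sets produce literally the same polynomials, by matching indices through the bijection $f$.

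Fix $\mu$ with at most three rows. The polynomials in $\BBB'_\mu$ are the $F_T^S=\varepsilon_T\xx_T^{\cc(S)}$ with $S\in\SSYT_\mu$ and $T\in\SYT(\sh(S))$. The polynomials in $\BBB_\mu$ are the $F_T^{S'}=\varepsilon_T\xx_T^{\cc(S')}$ with $S'\in\SYT^{\unrhd\mu}$ and $T\in\SYT(\sh(S'))$. Here the exponent monomial for a standard $S'$ is, by the ATY definition, given by the cocharge labels of $S'$ placed in their boxes, i.e.\ $\cc(S')$ viewed as a filling. Hence in both families the polynomial depends on the second index only through its shape and its cocharge tableau.

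Send the pair $(T,S)$ to $(T,f(S))$. By Lemma \ref{lem:1}, $f(S)\in\SYT^{\unrhd\mu}$. Since $f(S)=\std(\cc(S))$ and standardization preserves shape, $f(S)$ has the same shape as $S$, so $T$ is still an admissible first index. By Lemma \ref{lem:2}, $\cc(f(S))=\cc(S)$ as fillings of $\sh(S)$, so $\xx_T^{\cc(S)}=\xx_T^{\cc(f(S))}$ and therefore the two $F$'s coincide. By Lemma \ref{lem:3}, $f$ is a bijection $\SSYT_\mu\to\SYT^{\unrhd\mu}$, so $(T,S)\mapsto(T,f(S))$ is a bijection of index sets carrying each element of $\BBB'_\mu$ to the identical element of $\BBB_\mu$. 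Thus $\BBB'_\mu=\BBB_\mu$, even as indexed families.

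The only point needing care is that "cocharge tableau of a standard tableau" in the ATY/Chou--Hanada construction is the same filling as the $\cc(\cdot)$ used in Lemma \ref{lem:2}, namely cocharge subscripts of the reading word placed in their boxes. That is exactly how $\cc(S)$ was defined in Section \ref{sec:background}, so no further argument is needed. The two-row and one-row cases are included, since the lemmas only used $m_3(\cdot)$ possibly being zero.
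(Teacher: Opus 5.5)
Your proposal is correct and follows the paper's argument: the corollary comes from combining Lemmas \ref{lem:1}, \ref{lem:2}, and \ref{lem:3}. Together they say that $f=\std\circ\cc$ is a shape- and cocharge-tableau-preserving bijection $\SSYT_\mu\to\SYT^{\unrhd\mu}$, so $(T,S)\mapsto(T,f(S))$ identifies the two families polynomial by polynomial, and you have just written out the index-matching that the paper compresses into the last sentence of the proof of Lemma \ref{lem:3}.
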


\begin{proposition}
    The sets $\BBB_\mu'$ and $\BBB_\mu$ do not agree for the four row shape $\mu=(2,2,2,2)$.
\end{proposition}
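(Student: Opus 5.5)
The plan is to exhibit a single polynomial that lies in $\BBB'_\mu$ but not in $\BBB_\mu$ for $\mu=(2,2,2,2)$. The cleanest way, following the counterexample of Section \ref{sec:counterexample}, is to find some $S\in\SSYT_\mu$ (content two each of $1,2,3,4$, so $n=8$) whose cocharge tableau $\cc(S)$ has two equal labels in the same column. Then for every $T$ of the same shape, the transposition of those two boxes is a column permutation fixing $\xx_T^{\cc(S)}$. So the terms of $\varepsilon_T\xx_T^{\cc(S)}$ cancel in pairs and $F_T^S=0$.

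On the other side, $\BBB_\mu$ is by definition a subset of the Ariki--Terasoma--Yamada basis of $R_n$ from Theorem \ref{thm: coinvariant higher specht}. Its elements are therefore linearly independent modulo $(e_1,\ldots,e_n)$, and in particular nonzero. Hence $0\in\BBB'_\mu\setminus\BBB_\mu$, and the two sets differ.

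The main work is finding such an $S$, which I would do by a short search over shapes $\lambda\unrhd\mu$ with at least four rows, where the first column can be long. For each candidate I would compute the cocharge subwords of the reading word directly. For instance, the shape $(4,2,1,1)$ with rows $1124/23/3/4$ (bottom to top) gives cocharge tableau with rows $0001/11/2/3$. This has no repeated column entry, so it fails, and I would try other fillings and shapes such as $(3,3,1,1)$, $(2,2,2,1,1)$ and $(3,2,2,1)$. Here Lemma \ref{lem:greedy} guides the search: repeated labels in a column need a later cocharge subword to reach the same label as an earlier one at a box stacked above or below. Such a search is finite and hand-checkable.

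If no filling with a repeated column label exists for this $\mu$, I would fall back on a different argument. Compare the multisets of cocharge tableaux $\{\cc(S):S\in\SSYT_\mu\}$ and $\{\cc(S):S\in\SYT^{\unrhd\mu}\}$ shape by shape; both have the same size by Proposition \ref{prop: lascoux}. Then exhibit one cocharge tableau (a filling of the shape) that appears in the first multiset but not in the second. One would still have to argue that the resulting polynomial $\varepsilon_T\xx_T^{\cc(S)}$ does not coincide with any element of $\BBB_\mu$. I would do this with Theorem \ref{thm:main2}: take a monomial $\xx_P^{\cc(S)}$ whose coefficient is nonzero in $F_T^S$ but vanishes in every $F_T^{S'}$ with $S'\in\SYT^{\unrhd\mu}$ of the same shape. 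I expect that the explicit search settles the matter, with this fallback step being the harder one if it is needed.
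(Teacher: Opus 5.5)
\textbf{There is a genuine gap in your main step.} A column transposition $t\in\mathcal{C}(T)$ with $t\,\xx_T^{Z}=\xx_T^{Z}$ does not force $\varepsilon_T\xx_T^{Z}=0$.

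From $\varepsilon_T=\sum_{\tau}\sum_{\sigma}(-1)^\tau\tau\sigma$ you get $t\varepsilon_T=-\varepsilon_T$, so $F_T^Z$ is antisymmetric under $t$. But $t\xx_T^Z=\xx_T^Z$ only gives $F_T^Z=\varepsilon_T t\,\xx_T^Z$. Concluding $F_T^Z=-F_T^Z$ would need $\varepsilon_T t=-\varepsilon_T$, i.e.\ $t$ passing through the row symmetrizer. That holds when both boxes are alone in their rows, which is the situation of Section~\ref{sec:counterexample}. More generally, $F_T^Z$ vanishes if every row permutation of $Z$ still has a repeated column entry (Lemma~\ref{lem:column-cancel} applied to every $Z_P$). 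It fails in general. For example, take shape $(2,1)$, $Z$ with bottom row $1,0$ and top entry $1$, and $T$ with bottom row $1,2$ and top entry $3$. Then $F_T^Z=x_2(x_3-x_1)\neq 0$.

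\textbf{This is fatal for $\mu=(2,2,2,2)$.} The only $S\in\SSYT_\mu$ whose cocharge tableau has a repeated column label is the paper's $S$, with rows $11233$, $24$, $4$ and $\cc(S)$ with rows $00001$, $12$, $1$. Your search would find it and stop. But this $\cc(S)$ is column-separable: swapping the two entries of its second row gives columns $(0,2,1)$ and $(0,1)$. That row permutation has trivial alternating $\tau$-set, so Theorem~\ref{thm: coeff} gives a monomial with coefficient $\pm|H(\cc(S))|=\pm 24$. Concretely, for $T$ with rows $12345$, $67$, $8$, the monomial $x_5x_6^2x_7x_8$ appears with coefficient $24$.

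In fact, listing all $56$ elements of $\SSYT_\mu$ shows that every other $\cc(S)$ is semistandard, except one of shape $(6,1,1)$ whose alternating $\tau$-set is also trivial. So no element of $\BBB'_\mu$ vanishes, and $0\in\BBB'_\mu\setminus\BBB_\mu$ is simply false. (Also, $(2,2,2,1,1)$ is not an admissible shape: a strict column of length $5$ cannot use only the entries $1,2,3,4$.)

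\textbf{What the paper does instead.} The paper does not argue via vanishing at all. It observes that for the above $S$ the filling $\cc(S)$ is not semistandard. Hence it is not the cocharge tableau of any standard tableau, so this element of $\BBB'_\mu$ uses an exponent filling that never occurs in $\BBB_\mu$.

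That is essentially your fallback. However, your own branching ("if no filling with a repeated column label exists") would never reach it, because such a filling does exist. If you want the comparison at the level of polynomials rather than exponent fillings, you would still have to carry out the monomial-coefficient argument you only sketch.
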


\begin{proof}
The tableau $$\young(4,24,11233)$$ which is semistandard of content $\mu$ has cocharge tableau
$$\young(1,12,00001)$$ which is not semistandard, but is the $S$ of a basis element in $\BBB'_\mu$.  It therefore cannot be the cocharge tableau of a standard tableau, and so it does not match the elements of $\BBB_\mu$.
\end{proof}

\subsection{Equivalence for hook shapes}

We now show that the two basis constructions agree for hook shapes, where we recall that a hook shape is a partition of the form $(k,1,1,1,\ldots,1)=(k,1^{n-k})$ for some $k$.

\begin{proposition}
    The construction $\BBB_\mu'$ of Gillespie--Rhoades coincides with $\BBB_\mu$ for all hook shapes $\mu$.
\end{proposition}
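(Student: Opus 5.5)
We follow the same strategy as in the three-row case: we build a shape-preserving bijection $f:\SSYT_\mu\to\SYT^{\unrhd\mu}$ with $\cc(f(T))=\cc(T)$ as tableaux, again taking $f(T)=\std(\cc(T))$. Then the polynomials $\varepsilon_{T'}\xx_{T'}^{\cc(T)}$ in $\BBB'_\mu$ coincide with the polynomials $F_{T'}^{f(T)}$ in $\BBB_\mu$.

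Let $\mu=(k,1^{n-k})$. A semistandard tableau $T$ of content $\mu$ has all $k$ ones in the bottom row, and each of $2,\ldots,n-k+1$ appears once. These larger letters form a standard-type filling of the rest of the shape $\lambda$: they lie in the first column above the bottom row, or at the right end of the bottom row. Since $2,3,\ldots$ occur once each, the first cocharge subword is the rightmost $1$ followed by all of $2,\ldots,n-k+1$. The remaining cocharge subwords are single $1$'s, which get label $0$.

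We then compute $\cc(T)$ directly. The letter $2$ gets label $0$ if it lies in the bottom row, since it is to the right of the $1$'s, and label $1$ if it lies in column one. In general, a letter $i+1$ gets label one more than $i$ exactly when it lies to the left of $i$ in the reading word. In the reading word, the first-column entries are read from top to bottom, followed by the bottom row. Hence the bottom-row letters $>1$ get labels forming a weakly increasing sequence, and the first column gets labels strictly increasing upward. We check that $\cc(T)$ is semistandard, and that the column labels start above the label of the bottom-left $0$. We also check that each label value $j$ used occurs as a horizontal strip, with the first occurrence of $j+1$ preceding the last $j$ in reading order. The argument of Lemma \ref{lem:2} then gives $\cc(\std(\cc(T)))=\cc(T)$.

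For dominance, Algorithm \ref{alg: ctype} applied to $f(T)$ places at least $k$ boxes in row $0$, coming from the $k$ zeros contributed by the ones. Any first part of size at least $k$ dominates $(k,1^{n-k})$, so $\ctype(f(T))\unrhd\mu$ and $f$ is well defined.

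For injectivity we show that $T$ is recoverable from $\cc(T)$. The positions of the letters $2,\ldots,n-k+1$ are determined one at a time: the label pattern tells us at each step whether the next letter goes up the first column or along the bottom row. Bijectivity then follows from the equal cardinalities given by Proposition \ref{prop: lascoux}.

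The main obstacle we expect is the bookkeeping of labels on the bottom row after a letter in the first column. If the letters go $i$ in the column and then $i+1$ in the bottom row, then $i+1$ is to the right of $i$ and keeps the same label. This could break strict increase or semistandardness, for example by putting a bottom-row label equal to a column label in the wrong place. We would first check small hooks by hand to confirm that $\cc(T)$ is really semistandard. If it is not, the fallback is to argue directly that $\std(\cc(T))$, read with its labels, still gives the same cocharge tableau and lies in $\SYT^{\unrhd\mu}$.
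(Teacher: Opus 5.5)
There is a genuine gap: you have misidentified the set $\SSYT_\mu$. Content $\mu=(k,1^{n-k})$ forces the $k$ ones to fill the first $k$ boxes of the bottom row. It does not force the shape $\lambda$ of $T$ to be a hook. The distinct letters $2,\ldots,n-k+1$ fill the skew shape $\lambda/(k)$ as a standard filling, for any $\lambda$ with $\lambda_1\ge k$. For instance, with $\mu=(2,1,1)$, the tableau with bottom row $1\,1$ and top row $2\,3$ lies in $\SSYT(\lambda,\mu)$ for $\lambda=(2,2)$.

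Your description of the reading word as ``first column from top to bottom, then bottom row'' therefore only covers hook-shaped $T$. So does everything you build on it:
\begin{itemize}
\item semistandardness of $\cc(T)$;
\item the horizontal-strip property needed to invoke the argument of Lemma \ref{lem:2};
\item the injectivity argument deciding whether each letter ``goes up the first column or along the bottom row.''
\end{itemize}
The final cardinality comparison with Proposition \ref{prop: lascoux} would then be between the wrong sets. Conversely, the worry in your last paragraph is unfounded: for hook-shaped $T$ your label computation is correct.

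The missing observation, which is the heart of the paper's proof, is that $\cc(T)=\cc(\std(T))$ for every $T\in\SSYT_\mu$. All ones of $T$ get label $0$, as do $1,\ldots,k$ in $\std(T)$. The rightmost one of $T$ sits in the same box as $k$ in $\std(T)$, so from that letter on the two cocharge computations agree letter by letter. Hence $\cc(T)$ is the cocharge tableau of a standard tableau. It is therefore automatically semistandard, with each label class a horizontal strip whose letters form an interval increasing left to right. This also shows $\std(\cc(S))=S$ for standard $S$, so your map is simply $f=\std$.

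The paper works with $\std$ directly. $\std(T)$ has $1,\ldots,k$ in its bottom row, which is exactly the condition for membership in $\SYT^{\unrhd\mu}$ when $\mu$ is a hook. Destandardization (merging $1,\ldots,k$ into $1$ and shifting the other letters down) gives the inverse, with no appeal to Lascoux's bijection. Your dominance argument via counting zeros in the cocharge tableau is fine once $\cc(f(T))=\cc(T)$ is established for all $T$.
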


\begin{proof}
    We construct a cocharge tableau preserving bijection from the set $\SSYT_\mu$ of semistandard Young tableau of content $\mu=(k,1^{n-k})$, to the set $\SYT^{\unrhd\mu}$ of standard Young tableaux whose catabolizability type dominates $\mu$.  Indeed, we claim that the standardization map is such a bijection.

    To show $\std:\SSYT_\mu\to \SYT^{\unrhd\mu}$ is well defined, we note that all of the $1$s of any $T\in \SSYT_\mu$ appear in the bottom row (and then there is one of all of the other letters).  When we standardize, the numbers $1,2,\ldots,k$ therefore all appear in the bottom row.  This is precisely the condition to be in $\SYT^{\unrhd\mu}$.  We can also therefore destandardize any $S\in \SYT^{\unrhd \mu}$ by changing $1,2,\ldots,k$ to $1$ and decrementing all other labels, and so $\std$ is a bijection.

    Finally, the cocharge tableau is unchanged under $\std$, since in both cases the first $k$ entries on the bottom row are labeled $0$, and then the cocharge is computed as normal starting from the $k$th letter.
\end{proof}

\section{Conditions for higher specht polynomials vanishing}\label{sec:conditions}

We now explore when higher Specht constructions are nonzero as polynomials.  First, we give a combinatorial construction to track the monomials of $\varepsilon_T x_T^S$.  A \textbf{filling} is a tableau that is not necessarily semistandard; that is, there is simply a nonnegative integer in each box of the diagram with no other restrictions.

\begin{definition}
    Given a filling $P$ of $\lambda$ with content $(1,1,\ldots,1)$ and any filling $\fS$ of the same shape $\lambda$, we define the monomial $$x_P^\fS=\prod_{u \in \lambda} x_{P(u)}^{\fS(u)}$$ where the product ranges over all boxes $u$ of $\lambda$ and $P(u)$ denotes the entry in box $u$.
\end{definition}

\begin{definition}
     Let $T\in \SYT(\lambda)$.  A \emph{$T$-snaking} is a filling $P$ with shape $\lambda$ such that if $i,j$ occur in the same column in $T$, then $i,j$ occur in different rows of $P$.    We write $\Snake(T)$ for the set of all $T$-snakings. 
\end{definition}

    We refer to these fillings as $T$-\emph{snaking} because of the following visualization: we may take the columns of $T$ and ``snake'' them down different paths (possibly disconnected) as in the following example:

\begin{example}\label{ex: t-snaking}
    Let $$\fS = 
    \begin{ytableau}
        0 & 1 \\
        1 & 2 & 0 \\
        2 & 1 & 3
    \end{ytableau}\hspace{0.5cm}\text{and} \hspace{0.5cm} T = 
    \begin{ytableau}
        *(pink)6 & *(cyan)8 \\
        *(pink)3 & *(cyan)4 & *(lime)7 \\
        *(pink)1 & *(cyan)2 & *(lime)5
    \end{ytableau}\hspace{0.5cm}.$$
 An example of a $T$-snaking is
    $$P = 
    \begin{ytableau}
        *(cyan)4 & *(pink)1 \\
        *(lime)5 & *(pink)6 & *(cyan)2 \\
        *(cyan)8 & *(lime)7 & *(pink)3
    \end{ytableau}.$$
 The corresponding monomial is
    $$ x_P^\fS = x_1^1 x_2^0 x_3^3 x_4^0x_5^1x_6^2x_7^1x_8^2 = x_1x_3^3x_5x_6^2x_7x_8^2.$$
\end{example}

\begin{definition}
    For a standard tableau $T$, let $T_i$ denote the set of entries in the $i$th column of $T$. Suppose $T_i = \{ a_1 < \dots < a_k\}$. Given a $T$-snaking $P$, suppose the entries of $T_i$ appear in the bottom to top order $a_{\tau(1)},\ldots,a_{\tau(k)}$.  This permutation $\tau$ is uniquely determined and we call it $\tau^{(i)}(P)$. 
\end{definition}

\begin{definition}\label{def: snaking-sign}
    We define the \emph{sign} of a $T$-snaking $P$, denoted $(-1)^P$, to be 
    $$ (-1)^P := \prod_{i}\sgn(\tau^{(i)}(P)).$$
\end{definition}

\begin{lemma}
    Let $Z$ be a filling of shape $\lambda$ and $T\in \SYT(\lambda)$. Then
    $$ F_T^\fS=\varepsilon_T x_T^\fS = \sum_{P\in \Snake(T)} (-1)^Px_P^\fS.$$
\end{lemma}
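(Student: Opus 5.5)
We expand $\varepsilon_T$ directly and apply it term by term to the monomial $x_T^\fS$. Concretely, $\varepsilon_T x_T^\fS=\sum_{\tau\in\mathcal{C}(T)}\sum_{\sigma\in\mathcal{R}(T)}(-1)^\tau\,\tau\sigma\, x_T^\fS$, and a permutation $w\in\mathfrak{S}_n$ acts on the variables by $x_i\mapsto x_{w(i)}$. Hence $w\,x_T^\fS=\prod_{u}x_{w(T(u))}^{\fS(u)}=x_{w\circ T}^\fS$, where $w\circ T$ denotes the filling obtained by replacing each entry $T(u)$ by $w(T(u))$. So each term of the double sum is the monomial $x_{\tau\sigma\circ T}^\fS$ with sign $(-1)^\tau$.

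The main step is to show that $(\tau,\sigma)\mapsto P:=\tau\sigma\circ T$ is a bijection from $\mathcal{C}(T)\times\mathcal{R}(T)$ onto $\Snake(T)$.

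\emph{The image lies in $\Snake(T)$.} Take $i,j$ in the same column of $T$. The entry in the box of $P$ that held $a$ in $T$ is $\tau(\sigma(a))$. Since $\tau$ preserves columns, $\tau^{-1}(i)$ and $\tau^{-1}(j)$ lie in a common column of $T$, hence in different rows. Because $\sigma$ preserves rows, $\sigma^{-1}\tau^{-1}(i)$ and $\sigma^{-1}\tau^{-1}(j)$ are also in different rows, and these are exactly the positions of $i$ and $j$ in $P$.

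\emph{Injectivity.} This follows from the standard fact $\mathcal{C}(T)\cap\mathcal{R}(T)=\{\mathrm{id}\}$. If $\tau\sigma=\tau'\sigma'$, then $\tau'^{-1}\tau=\sigma'\sigma^{-1}$ lies in both groups, so it is the identity.

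\emph{Surjectivity.} Given $P\in\Snake(T)$, the entries of each column of $T$ occupy distinct rows in $P$. For a column $T_i=\{a_1<\dots<a_k\}$ (so $a_r$ lies in row $r$ of $T$), these entries appear in $P$ in rows listed bottom to top as $a_{\tau^{(i)}(1)},\dots,a_{\tau^{(i)}(k)}$. The rows they occupy are a set of $k$ distinct rows, and the main obstacle is to show this set is exactly rows $1,\dots,k$. Equivalently, we need that the column-permuted filling $\tau^{-1}\circ P$ has every row containing the same elements as the corresponding row of $T$. I would argue this by counting, processing columns from left to right, or equivalently by the classical lemma that $P=w\circ T$ lies in $\mathcal{C}(T)\mathcal{R}(T)\circ T$ iff no two entries of a column of $T$ share a row of $P$. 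Rows of $P$ have lengths $\lambda_1\ge\lambda_2\ge\cdots$, and each column of $T$ contributes at most one entry to each row. Since the number of columns of height $\ge r$ is $\lambda_r$, a pigeonhole induction from the top row downward forces each column of height $k$ to occupy exactly rows $1,\dots,k$. Once that holds, define $\tau$ within column $i$ to send $a_r$ to the entry $a_{\tau^{(i)}(r)}$ that sits in row $r$ of $P$. Then $\tau^{-1}\circ P$ has the same row sets as $T$, so it equals $\sigma\circ T$ for a unique $\sigma\in\mathcal{R}(T)$.

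\emph{Sign.} By the construction just given, $\tau$ restricted to column $i$ is $\tau^{(i)}(P)$, so $(-1)^\tau=\prod_i\sgn(\tau^{(i)}(P))=(-1)^P$. This uses Definition \ref{def: snaking-sign}, with some care about whether $\tau$ or $\tau^{-1}$ appears; the two have the same sign, so this does not matter. Summing over the bijection gives $F_T^\fS=\sum_{P\in\Snake(T)}(-1)^P x_P^\fS$.
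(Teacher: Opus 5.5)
Your proof is correct and follows the same route as the paper: you identify each term $(-1)^\tau\tau\sigma x_T^\fS$ with the filling $\tau\sigma\circ T$, and you show that $(\tau,\sigma)\mapsto\tau\sigma\circ T$ is a sign-matching bijection $\mathcal{C}(T)\times\mathcal{R}(T)\to\Snake(T)$. You also supply details the paper leaves implicit: injectivity via $\mathcal{C}(T)\cap\mathcal{R}(T)=\{\mathrm{id}\}$, and the pigeonhole step behind straightening the columns, which the paper only states afterwards as Lemma \ref{lemma: pigeonhole}.
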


\begin{proof}
First, consider a term $(-1)^\tau\tau \sigma x_T^\fS$ in $\varepsilon_T x_T^\fS$.  We construct an associated $T$-snaking as follows. The permutation $\sigma$ is a row permutation of $T$ and acts on the subscripts of $x_T^\fS$; let $Q$ be the tableau formed by applying the row permutation $\sigma$ to $T$.  This forms a snaking of the columns of $T$ such that each snake still has the entries in the same relative order top to bottom as in $T$.  Then we have $(-1)^\tau\tau \sigma x_T^\fS=(-1)^\tau \tau x_Q^\fS$.

Now, applying $\tau$ to $Q$, we are permuting the entries within each snake, and obtain a $T$-snaking $P$.  Since $\tau$ acts on the subscripts of $x_Q^\fS$ correspondingly, the monomial we obtain is $x_P^\fS$, and notice that $(-1)^\tau$ is the product of the signs of each individual permutation of each column that comprises $\tau$, which is equal to $(-1)^P$.  Thus $(-1)^\tau \tau \sigma x_T^\fS=(-1)^P x_P^\fS$.  We now have a way of constructing a term $(-1)^P x_P^\fS$ for each term of $\varepsilon_T x_T^\fS$.  To show that this process is reversible, note that any $T$-snaking has a unique associated column permutation $\tau^{-1}$ that straightens the columns, and then a unique row permutation $\sigma^{-1}$ that returns us to $T$; thus the correspondence is bijective and the two sums are equal.
\end{proof}
We will make use of the following lemma.
\begin{lemma}\label{lemma: pigeonhole}
In any $T$-snaking $P$, an element $b$ of a column of height $h$ in $T$ cannot be in a row higher than $h$ in $T$.
\end{lemma}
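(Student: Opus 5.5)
The plan is a pigeonhole count on the rows of $P$ occupied by the column of $T$ containing $b$. I will read the statement as follows: if $b$ lies in a column $T_i$ of $T$ of height $h$, then in any $T$-snaking $P$ the entry $b$ lies in one of rows $1,\ldots,h$ of $P$. Rows are indexed from the bottom, in French notation.

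\textbf{Step 1 (the column forces $h$ distinct rows).} By the definition of a $T$-snaking, the $h$ entries of $T_i$ lie in pairwise different rows of $P$. Let $r_1<r_2<\cdots<r_h$ be these rows.

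\textbf{Step 2 (counting columns with their tops included).} Since $T$ is standard, the columns of $\lambda$ weakly decrease in height from left to right, so $T_i$ is a column of height $h$. Suppose for contradiction that $b$ sits in a row $r>h$ of $P$. Then the largest row index satisfies $r_h\ge r\ge h+1$. The shape has at most $\lambda'_1$ rows, and the rows $r_1,\ldots,r_h$ all exist in $\lambda$.

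\textbf{Step 3 (the pigeonhole argument).} This is the step I expect to be the main obstacle, because the hypothesis that $P$ is a snaking of every column must be used simultaneously. A row of length $\lambda_r$ contains exactly $\lambda_r$ boxes, which is the number of columns of height at least $r$. For each $r>h$, every box of row $r$ of $P$ must be filled. Each column $T_j$ contributes at most one entry to row $r$, since its entries occupy distinct rows. So the $\lambda_r$ boxes of row $r$ receive entries from $\lambda_r$ distinct columns of $T$.

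I will argue by downward induction on rows, using the set $\mathcal{C}_{\ge r}$ of all columns of $T$ of height at least $r$. Row $\lambda'_1$ has $\lambda_{\lambda'_1}$ boxes, which equals $|\mathcal{C}_{\ge \lambda'_1}|$. Now consider the rows $r,\ldots,\lambda'_1$ together. They contain $\sum_{s\ge r}\lambda_s$ boxes. Each column of height $k$ can place at most $\max(0,k-r+1)$ entries into these rows. Summing this bound over all columns gives exactly $\sum_{s\ge r}\lambda_s$. Equality is therefore forced, so every column of height $k\ge r$ places exactly $k-r+1$ entries in rows $\ge r$, and no column of height less than $r$ places any entry there.

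Taking $r=h+1$ in this argument shows that columns of height $h$ place no entries in rows above $h$. This contradicts the assumption on $b$. The key point to verify carefully is the column-wise bound: a column of height $k$ has $k$ entries in distinct rows, so at most $\max(0,k-r+1)$ of them can lie in the rows $r,\ldots,\lambda'_1$, since there are only $\lambda'_1-r+1$ such rows. Here I note that this bound alone gives $\min(k,\lambda'_1-r+1)$, not $\max(0,k-r+1)$.

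\textbf{The gap, and how to close it.} To obtain the sharper bound, I will count the rows below $r$ instead. The $k$ entries of a column of height $k$ occupy $k$ distinct rows. The rows below $r$ contain $\sum_{s<r}\lambda_s$ boxes, and each column puts at most $\min(k,r-1)$ entries there. Since $\sum_j\min(k_j,r-1)=\sum_{s<r}\lambda_s$, every column must put exactly $\min(k,r-1)$ entries below row $r$. Hence a column of height $k\le r-1$ has all of its entries below row $r$. Taking $r=h+1$ completes the proof.
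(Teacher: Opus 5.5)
Your final argument, which counts the boxes in rows $1,\dots,r-1$, is correct, but it is organized differently from the paper's proof.

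\textbf{The paper's route.} The paper uses an extremal counterexample. It takes the leftmost entry $b$ that sits in a row $h'$ above the height $h$ of its column. Every column of height at least $h'$ lies strictly to the left of $b$'s column, so it has no offending entries. Its $k$ entries therefore occupy $k$ distinct rows among $1,\dots,k$, that is, exactly one entry in each of those rows. Hence these $\lambda_{h'}$ columns already fill row $h'$, leaving no room for $b$.

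\textbf{Your route.} You do a single global double count. The $\sum_{s<r}\lambda_s$ boxes below row $r$ are all filled. Each column of height $k$ contributes at most $\min(k,r-1)$ of these entries. Since $\sum_j \min(k_j,r-1)=\sum_{s<r}\lambda_s$, every bound must be attained. State the identity explicitly; it holds because $\lambda_s=\#\{j: k_j\ge s\}$.

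\textbf{Comparison.} Your version needs no minimal-counterexample choice. It also directly yields the stronger fact that each column of height $k$ of $T$ has exactly one entry in each of rows $1,\dots,k$ of $P$. The paper's proof uses exactly this fact tacitly for the taller columns without stating it. The paper's version is shorter and stays local to a single row.

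\textbf{What to fix in the write-up.} Delete the first version of Step~3. The bound $\max(0,k-r+1)$ on the number of entries in rows $\ge r$ is the desired conclusion, not an a priori estimate, so that paragraph is circular, as you yourself noticed. The count of rows below $r$ is the actual proof.
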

\begin{proof} 
Let $\lambda$ denote the shape of $T$.  Assume for contradiction there exists a $b$ of a column of height $h$ which is in row $h'$ ($h'>h)$ in some $T$-snaking $P$. 

Assume that $b$ is the leftmost such entry in $T$. Then we know that for any column of height $\geq h'$, there exists exactly one element from the column that is in row $h'$ in $P$. That is, there are $\lambda_{h'}$ many entries in row $h'$ of $P$ coming from columns of height $\geq h'$ in $T$. Thus, it is impossible for $b$ to also be in row $h'$.
\end{proof}

We now analyze how to group terms of this summation together to obtain the coefficient of any given monomial.  To do so, we need to define a different type of $\mathfrak{S}_n$ action on tableau - rather than acting on the labels, we permute the positions of the labels, with the boxes labeled $1,2,3,\ldots,n$ in ascending order up each column from left to right by default. We make this rigorous as follows.

\begin{definition}
    The \textbf{canonical box ordering} of a partition shape $\lambda$ is the ordering of the boxes $1,\ldots,n$ where the first column's boxes are indexed $1,2,\ldots,\lambda_1'$, the second column's boxes are $\lambda_1'+1,\ldots,\lambda_1'+\lambda_2'$, and so on, where $\lambda'$ denotes the transpose partition.

    Given a filling $\fS$ of shape $\lambda$ and $b\in \{1,\ldots,n\}$, we write $\fS(b)$ to denote the entry in the box whose index is $b$ under the canonical box labeling.
\end{definition}

\begin{definition}
    The \textbf{box action} of $\mathfrak{S}_n$ on the set of all fillings of a partition of size $n$, denoted $\cdot$, is defined by the rule $\pi \cdot \fS(b)=\fS(\pi^{-1}(b))$. 
\end{definition}

\begin{example}
Suppose we have
 $$\fS=\raisebox{-0.5cm}{\young(20,123,1142)\hspace{1cm} \young(36,258,1479)}$$ where the canonical box ordering is shown at right.  Then the permutation $(12)(59)$ fixes $\fS$, and the permutation $(568)$ acts on $\fS$ via the box action as follows:
 $$(568)\cdot \fS=\raisebox{-0.5cm}{\young(22,130,1142)}$$
\end{example}
\begin{lemma}\label{lem:row-perm-same-sign}
    Suppose $\pi$ is a row permutation of the canonical box ordering of $\lambda$. Then,
    $$\sum_{P \in\Snake(T)} (-1)^Px_P^\fS = \sum_{P \in\Snake(T)} (-1)^P x_P^{\pi \cdot \fS}.$$
\end{lemma}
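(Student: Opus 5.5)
We prove this with a sign-preserving bijection on $\Snake(T)$. Fix the row permutation $\pi$ of the canonical box ordering. For a $T$-snaking $P$, write $\pi\cdot P$ for the box action of $\pi$ on $P$, so that $(\pi\cdot P)(b)=P(\pi^{-1}(b))$. We would check the following three facts in order.
\begin{enumerate}
\item The map $P\mapsto \pi\cdot P$ sends $\Snake(T)$ to itself.
\item It satisfies $x_{\pi\cdot P}^{\pi\cdot \fS}=x_P^\fS$.
\item It satisfies $(-1)^{\pi\cdot P}=(-1)^P$.
\end{enumerate}
Given these, reindexing the right-hand sum by $P\mapsto \pi\cdot P$ gives the left-hand sum term by term. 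The map is a bijection because its inverse is the box action of $\pi^{-1}$.

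For (1), note that $\pi$ is a row permutation, so it maps each box to a box in the same row. Hence each entry $i$ lies in the same row of $\pi\cdot P$ as it did in $P$. The $T$-snaking condition only asks that entries from a common column of $T$ lie in distinct rows of $P$, and this is preserved.

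For (2), we compute directly:
$$x_{\pi\cdot P}^{\pi\cdot\fS}=\prod_b x_{P(\pi^{-1}b)}^{\fS(\pi^{-1}b)}=\prod_{b'}x_{P(b')}^{\fS(b')}=x_P^\fS,$$
after substituting $b'=\pi^{-1}(b)$.

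For (3), the permutation $\tau^{(i)}(P)$ records only the bottom-to-top order in which the entries of column $T_i$ appear in $P$. Since these entries occupy distinct rows, this order is simply the ordering of their row indices. The box action of $\pi$ leaves every entry in its row, so the relative vertical order of the entries of each $T_i$ is unchanged. Therefore $\tau^{(i)}(\pi\cdot P)=\tau^{(i)}(P)$ for every $i$, and the signs agree.

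The main point needing care is (3): one must see that the sign of a snaking depends only on the rows occupied by each column's entries and not on their horizontal positions. This is exactly why the statement is restricted to row permutations $\pi$. A column permutation would reorder entries vertically and could flip the sign.
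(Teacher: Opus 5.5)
Your proof is correct and follows the same route as the paper's: reindex the sum over $\Snake(T)$ by the box action of $\pi$, using that $\pi$ permutes $\Snake(T)$ and that $x_{\pi\cdot P}^{\pi\cdot \fS}=x_P^\fS$. Your step (3), that $(-1)^{\pi\cdot P}=(-1)^P$ because a row permutation does not change the vertical order of each column's entries, is used silently in the paper's final reindexing, so your write-up is slightly more complete than the paper's.
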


\begin{proof}
  We have $x_P^\fS=x_{\pi\cdot P}^{\pi\cdot \fS}$ because we are simply rearranging the positions of the labels of each of $P$ and $\fS$ in the same way.  Notice that $\pi$, under the box action, permutes $\Snake(T)$.  Thus $$\sum_{P\in\Snake(T)} (-1)^Px_P^\fS = \sum_{P\in\Snake(T)} (-1)^P x_{\pi\cdot P}^{\pi \cdot \fS}=\sum_{P\in\Snake(T)} (-1)^P x_P^{\pi \cdot \fS}$$ as desired.
\end{proof}

We now need one more piece of notation. Intuitively, given a monomial $x_P^\fS$, we can ``straighten'' the snakes, and observe the effect that the same box permutation has on $S$.

\begin{definition}\label{def: sp}
    Let $\fS,T$ be as before (with shape $\lambda$), and suppose $P$ is a $T$-snaking. Let $T_P$ denote the unique filling of $\lambda$ with identical column content to $T$, and the order of entries appearing top to bottom identical to $P$. Noting that $T_P = \pi \cdot P$ for some $\pi \in \mathcal{R}(P)$, we define
    $$\fS_P := \pi\cdot \fS.$$
\end{definition}

\begin{example}\label{ex: t-snaking-straight}
    For the $T$-snaking in example \ref{ex: t-snaking}, we have that
    $$\fS_P = 
    \begin{ytableau}
        1 & 0 \\
        2 & 0 & 1 \\
        3 & 2 & 1
    \end{ytableau}\hspace{0.5cm}\text{and} \hspace{0.5cm} T_P = 
    \begin{ytableau}
        *(pink)1 & *(cyan)4 \\
        *(pink)6 & *(cyan)2 & *(lime)5 \\
        *(pink)3 & *(cyan)8 & *(lime)7
    \end{ytableau}\hspace{0.5cm}.$$  We also have that $(-1)^{T_p}=-1$.
\end{example}

Finally, if we want two $T$-snakings $P,P'\in \Snake(T)$ such that $x_P^\fS=x_{P'}^\fS$, the elements of $\fS$ assigned to each label in $P'$ must match those of $P$.  We claim that for fixed $P$,  the $T$-snakings $P'\in \Snake(T)$ that satisfy this are precisely in bijection with the column permutations $\tau$ of $T$ that send $\fS_P$ to a row permutation of itself under the box action.

\begin{lemma}\label{lem: same-monomial-rowcol}
For any given $T$-snaking $P$, there is a bijection between the $T$-snakings $P'$ for which $x_{P'}^\fS=x_{P}^\fS$, and pairs $(\tau,\rho)$ where $\tau$ is a column permutations of the columns of $\lambda$ for which $\tau\cdot \fS_P=\sigma\cdot \fS_P$ for some row permutation $\sigma$ of the boxes, and where $\rho$ is a row permutation for which $\rho\cdot \fS=\fS$.
\end{lemma}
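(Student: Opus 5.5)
We will set up the correspondence explicitly by passing between $T$-snakings and their straightened forms. Recall from Definition \ref{def: sp} that every $T$-snaking $P$ determines a unique row permutation $\pi_P$ of the boxes with $T_P=\pi_P\cdot P$ and $\fS_P=\pi_P\cdot\fS$. Moreover $T_P$ has the same column contents as $T$, so $T_P=\tau_P\cdot T$ for a unique column permutation $\tau_P$. Hence $P=\pi_P^{-1}\tau_P\cdot T$.

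We then compare monomials. Since $x_P^{\fS}=x_{\pi\cdot P}^{\pi\cdot\fS}$ for any box permutation $\pi$, we get $x_P^\fS=x_{T_P}^{\fS_P}$. The monomial $x_P^\fS$ records, for each label $i\in[n]$, the exponent $e_P(i)$ sitting in the box of $\fS$ where $P$ places $i$. So $x_{P'}^\fS=x_P^\fS$ exactly when $e_{P'}=e_P$ as functions on $[n]$.

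Now fix $P$ and let $P'$ be another $T$-snaking. Put $\tau=\tau_{P'}\tau_P^{-1}$, a column permutation, so that $T_{P'}=\tau\cdot T_P$. Translate $e_{P'}=e_P$ into a statement about tableaux: the filling $\fS_{P'}$ placed on $T_{P'}$ assigns the same exponent to each label as $\fS_P$ placed on $T_P$. Equivalently, $\fS_{P'}=\tau\cdot\fS_P$ as fillings. Since $\fS_{P'}=\pi_{P'}\cdot\fS$ and $\fS_P=\pi_P\cdot\fS$ are both row permutations of $\fS$, the condition becomes $\tau\cdot\fS_P=\sigma\cdot\fS_P$ with $\sigma=\pi_{P'}\pi_P^{-1}$ a row permutation. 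This gives the $\tau$-component of the bijection.

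The row permutation $\sigma$ is not determined by $\tau$: it is determined only up to the stabilizer of $\fS_P$ among row permutations. That stabilizer is conjugate to $H(\fS)$, since $\fS_P=\pi_P\cdot\fS$. The second coordinate $\rho\in H(\fS)$ records this freedom. Concretely, we will send $P'$ to $(\tau,\rho)$, where $\rho$ measures the discrepancy between $\pi_{P'}$ and a fixed canonical choice $\sigma_\tau$ of row permutation with $\tau\cdot\fS_P=\sigma_\tau\cdot\fS_P$. We define
\[
\rho=\pi_P^{-1}\sigma_\tau^{-1}\pi_{P'} ,
\]
which stabilizes $\fS$ and is a row permutation.

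For the inverse map, given $(\tau,\rho)$ we set
\[
\pi'=\sigma_\tau\pi_P\rho, \qquad P'=\pi'^{-1}\tau\tau_P\cdot T .
\]
We must check that $P'$ is a $T$-snaking. This holds because $P'$ is obtained from a column permutation of $T$ followed by a row permutation, and row permutations preserve the property that column-mates lie in distinct rows. We must also check that $x_{P'}^\fS=x_P^\fS$, which is the computation of the previous paragraph run backwards.

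The main obstacle is verifying that the two maps are mutually inverse. The problem is that $\pi_{P'}$ is only well defined once we know $T_{P'}$ is the straightening, so the action conventions (left versus right, $\pi^{-1}$ in the box action) must be tracked carefully, and a canonical choice of $\sigma_\tau$ must be fixed. I would fix it as the lexicographically least such row permutation; any fixed choice works, since the fibers are $H$-torsors. I would verify mutual inverseness by checking that $\pi_{P'}=\pi'$ and $\tau_{P'}=\tau\tau_P$ on a generic box, and I expect the rest to be routine.
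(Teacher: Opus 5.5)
Your proof is correct and takes essentially the same route as the paper. Both arguments straighten $P$ and $P'$ to $T_P$ and $T_{P'}$, take the column permutation $\tau$ with $T_{P'}=\tau\cdot T_P$, deduce that $\tau\cdot\fS_P=\fS_{P'}$ is a row permutation of $\fS_P$, and account for the remaining $H(\fS)$-freedom. The difference is only in how that freedom is normalized: the paper picks a canonical (row-sorted) representative in each $H(\fS)$-orbit of snakings and takes $\rho$ to be the normalizing permutation, while you fix a canonical $\sigma_\tau$ and take $\rho$ to be the discrepancy. The mutual-inverse check you defer is indeed routine: $\pi'\cdot P'=\tau\tau_P\cdot T$ has the column contents of $T$ and the row positions of $P'$, so it equals $T_{P'}$, which forces $\pi_{P'}=\pi'$ and $\tau_{P'}=\tau\tau_P$.
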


\begin{proof}
First note that we are looking to compute the size of the equivalence class, among $T$-snakings, of a given $T$-snaking $P$ under the equivalence of generating the same monomial as $x_P^\fS$.
Given a row permutation $\rho$ for which $\rho\cdot \fS=\fS$, note that $$x_P^\fS=x_{\rho\cdot P}^{\rho \cdot \fS}=x_{\rho\cdot P}^\fS$$
and so $\rho\cdot P$ is in this equivalence class. 
It follows that we can choose a representative of the equivalence class of $P$ that has all of its entries in a given row that correspond to the same entry in $\fS$ sorted in increasing order from left to right. 
We call such a representative \textit{canonical} with respect to $\fS$, and we now can assume without loss of generality that our original $P$ was canonical.

Consider another $T$-snaking $Q$.  First, let $\rho$ be the unique row permutation that fixes $\fS$ under the box action and for which $P':=\rho\cdot Q$ is canonical.    Now, to obtain $\tau$ from $P'$, first note that there are unique row permutations $\pi,\pi'$ that send $P,\fS$ and $P',\fS$ respectively to obtain the pairs $T_P,\fS_P$ and $T_{P'},\fS_{P'}$ (because $P$ and $P'$ are standard).  We have $$x_{T_P}^{\fS_P}=x_P^\fS=x_{P'}^\fS=x_{T_{P'}}^{\fS_{P'}}$$
and $T_P$ and $T_{P'}$, by definition, have the same sets of column elements as $T$.  Thus there is a unique column permutation $\tau$ of $T$ such that $\tau\cdot T_P=T_{P'}$, and we compute that $$x_{T_P}^{\fS_P}=x_{\tau \cdot T_P}^{\tau \cdot \fS_P}=x_{T_{P'}}^{\tau\cdot \fS_P}.$$

Combining the last two equations, we find $x_{T_{P'}}^{\tau\cdot \fS_P}=x_{T_{P'}}^{\fS_{P'}}$.  Thus $\tau \cdot \fS_P$ has the same entries in each box as $\fS_{P'}$, in other words, $\tau\cdot \fS_P=\fS_{P'}$.  But $\fS_P=\pi \cdot \fS$ and $\fS_{P'}=\pi'\cdot \fS$, so $\tau \cdot \fS_P = \pi'\cdot \pi^{-1}\cdot \fS_P$, which means $\tau$ applied to $\fS_P$ is indeed a row permutation of $\fS_P$.  We have thus constructed a pair $(\tau,\rho)$ from a starting $T$-snaking $Q$.

We now show that we can reverse the process; fix column permutation $\tau$ that sends $\fS_P$ to a row permutation of itself, and choose a row permutation $\rho$ such that $\rho\cdot \fS=\fS$.  Define $\pi$ again to be the unique row permutation sending $P,\fS$ to $T_P,\fS_P$, and define $T_P'=\tau \cdot T_P$.

The row permutation $\sigma$ for which $\tau\cdot \fS_P=\sigma\cdot \fS_P$ is unique only up to a choice of row permutation that fixes $\fS_P$ under the box action; we let $\sigma$ be the unique such permutation such that if we define $P'=\pi^{-1}\sigma^{-1}\cdot T_P'$, then $P'$ is in canonical form with respect to $\fS$.  We claim that for this definition of $P'$, we have  $x_{P'}^\fS=x_{P}^\fS$.  Indeed, we have 
$$x_{P'}^\fS=x_{\pi^{-1}\sigma^{-1}\cdot T_P'}^{\pi^{-1}\cdot \fS_P}=x_{\sigma^{-1}\tau \cdot T_P}^{\sigma^{-1}\tau \cdot  \fS_P}=x_{T_P}^{\fS_P}=x_{P}^{\fS}.$$
Thus we have used $\tau$ to uniquely determine a choice of $P'$ in canonical form, and using $\rho$ to form $Q=\rho^{-1}\cdot P'$, we obtain a generic $Q$ for which $x_Q^\fS=x_P^\fS$.  This reverses the process above, and so we have a bijection.
\end{proof}

This leads us to the following definitions.

\begin{definition}
    The \textbf{$\rho$-group} $H(\fS)$ of a filling $\fS$ of shape $\lambda$ is the group of row permutations $\rho$ for which $\rho\cdot \fS=\fS$.  Note that $H(S)$ is a subgroup of the row group $\mathcal{R}(T)$, where $T$ is the canonical box ordering $T$ of shape $\lambda$, and $H(\fS)\cong H(\pi\cdot \fS)$ for any row permutation $\pi$.
\end{definition}

\begin{definition}
    The \textbf{(alternating) $\tau$-set} $A(\fS)$ of a filling $\fS$ of $\lambda$ is the set of column permutations $\tau$ of $T$, where $T$ is the canonical box ordering of shape $\lambda$, whose box action on $\fS$ preserves the content of each row of $\fS$.
\end{definition}

\begin{remark}
    The alternating $\tau$-set is not a group in general. 
 Suppose we have  $$\fS=\young(100,0112).$$  Using \raisebox{-0.1cm}{\scriptsize $\young(246,1357)$} as the canonical box labeling as usual, we have $A(\fS)=\{\mathrm{id},(12)(34),(12)(56)\}$ which is not a subgroup - indeed, the composition $(12)(34)(12)(56)=(34)(56)$ is not an element of $A(\fS)$.
\end{remark}

\begin{theorem}\label{thm: coeff}
    The coefficient of $x_P^\fS$ in $F_T^\fS$ is $$\pm |H(\fS)|\cdot \sum_{\tau \in A(\fS_P)}(-1)^\tau$$
\end{theorem}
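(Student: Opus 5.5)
We expand $F_T^\fS$ by the snaking lemma, $F_T^\fS=\sum_{Q\in\Snake(T)}(-1)^Q x_Q^\fS$. The coefficient of $x_P^\fS$ is then the signed count $\sum (-1)^Q$ over all $T$-snakings $Q$ with $x_Q^\fS=x_P^\fS$. Lemma \ref{lem: same-monomial-rowcol} parametrizes exactly these $Q$ by pairs $(\tau,\rho)$. Here $\rho$ ranges over $H(\fS)$, and $\tau$ ranges over column permutations sending $\fS_P$ to a row permutation of itself. The latter condition is the same as preserving the content of each row of $\fS_P$, so $\tau$ ranges over $A(\fS_P)$. What remains is to track the sign $(-1)^Q$ through this bijection and show it equals $\pm(-1)^\tau$, with a global sign independent of $(\tau,\rho)$.

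For the sign bookkeeping I will use the definitions directly. The sign $(-1)^Q$ depends only on the bottom-to-top order in which each column's entries of $T$ appear in $Q$, that is, on the straightened filling $T_Q$. A row permutation of $Q$ (such as $\rho$) keeps each entry in its row, so it does not change the relative vertical order of the entries of any column of $T$. Hence $T_{\rho\cdot Q}=T_Q$ and $(-1)^{\rho\cdot Q}=(-1)^Q$, so the $\rho$-coordinate does not affect the sign. For the $\tau$-coordinate, the reverse construction in the lemma builds $P'$ as $\pi^{-1}\sigma^{-1}\cdot(\tau\cdot T_P)$, which straightens to $T_{P'}=\tau\cdot T_P$. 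Since $(-1)^{P'}$ is the product over columns of the signs of the orderings recorded in $T_{P'}$, and $\tau$ acts independently within each column, we get $(-1)^{P'}=(-1)^\tau(-1)^{T_P}=(-1)^\tau(-1)^P$.

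Summing over the pairs gives
\[\sum_{\rho\in H(\fS)}\sum_{\tau\in A(\fS_P)}(-1)^P(-1)^\tau=(-1)^P\,|H(\fS)|\sum_{\tau\in A(\fS_P)}(-1)^\tau,\]
which is the claimed formula with the global sign $\pm=(-1)^P$. Theorem \ref{thm:main2} is then the restatement with $\pi$ the straightening row permutation, since $\fS_P=\pi\cdot\fS$.

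The main obstacle is the claim $T_{P'}=\tau\cdot T_P$ together with the sign identity for $\tau$, because the lemma's construction inserts the auxiliary $\sigma$ chosen to make $P'$ canonical. I would first check that $\sigma^{-1}$ and $\pi^{-1}$ are row permutations, so they preserve column-orderings and hence the straightening. Next I would check that $\tau\cdot T_P$ permutes entries within columns exactly as $\tau$ permutes boxes, so that the bottom-to-top order of the column entries changes by composition with the restriction of $\tau$ to each column. Finally I would verify that the sign of a composition within one column is multiplicative, giving the factor $\sgn(\tau)$. If the conventions about $\tau$ acting on the left versus the right produce $\tau^{-1}$ instead, this is harmless, since $(-1)^{\tau^{-1}}=(-1)^\tau$.
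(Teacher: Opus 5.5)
Your proposal is correct and takes the paper's route: the paper's entire proof is an appeal to the bijection of Lemma~\ref{lem: same-monomial-rowcol}, and you use that same bijection. What you add is the sign bookkeeping the paper leaves implicit, namely that $\rho\in H(\fS)$ does not change $(-1)^Q$ and that $T_{P'}=\tau\cdot T_P$ gives $(-1)^{P'}=(-1)^\tau(-1)^P$; this is what justifies the alternating sum and identifies the global sign as $(-1)^P$.
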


\begin{proof}
    This follows immediately from Lemma \ref{lem: same-monomial-rowcol}.
\end{proof}

    All of the possible tableaux $\fS_P$ are row permutations of $\fS$, so we also obtain the following condition for when the polynomial is $0$.

\begin{corollary}
    The higher Specht polynomial $F_T^\fS$ is identically $0$ if and only if for all row permutations $\fS'$ of $\fS$, we have $\sum_{\tau \in A(\fS')}(-1)^\tau=0$.
\end{corollary}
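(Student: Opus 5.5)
The plan is to deduce the corollary directly from Theorem \ref{thm: coeff}, together with the expansion $F_T^\fS=\sum_{P\in\Snake(T)}(-1)^P x_P^\fS$.

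First, every monomial appearing in $F_T^\fS$ has the form $x_P^\fS$ for some $P\in\Snake(T)$. Hence $F_T^\fS=0$ if and only if the total coefficient of $x_P^\fS$ vanishes for every $T$-snaking $P$. By Theorem \ref{thm: coeff}, that coefficient is $\pm|H(\fS)|\sum_{\tau\in A(\fS_P)}(-1)^\tau$. Since $|H(\fS)|\ge 1$ (the identity lies in $H(\fS)$), it vanishes exactly when $\sum_{\tau\in A(\fS_P)}(-1)^\tau=0$.

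Second, we identify the set $\{\fS_P : P\in\Snake(T)\}$ with the set of all row permutations of $\fS$.

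\begin{itemize}
\item By Definition \ref{def: sp}, each $\fS_P=\pi\cdot\fS$ with $\pi$ a row permutation of boxes, so every $\fS_P$ is a row permutation of $\fS$.
\item Conversely, given a row permutation $\pi$ of the boxes, set $P=\pi^{-1}\cdot T$. This moves the entries of $T$ within rows, so entries of a common column of $T$ remain in distinct rows, and $P$ is a $T$-snaking. Moreover $T_P=T=\pi\cdot P$, so $\fS_P=\pi\cdot\fS$.
\end{itemize}

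Thus, as $P$ ranges over $\Snake(T)$, $\fS_P$ ranges over all row permutations $\fS'$ of $\fS$. Combining the two steps gives: $F_T^\fS\equiv 0$ if and only if $\sum_{\tau\in A(\fS')}(-1)^\tau=0$ for every row permutation $\fS'$ of $\fS$. Note that this condition does not depend on $T$.

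The main subtlety is the well-definedness of $\fS_P$. Repeated entries in $\fS$ make the row permutation $\pi$ non-unique, but $\pi$ is determined by $P$ and $T$ (since $P$ has distinct entries), so $\fS_P$ is well defined. Beyond this, one should double-check that Theorem \ref{thm: coeff} gives the full coefficient of the monomial $x_P^\fS$, that is, that distinct snakings producing the same monomial are all accounted for by Lemma \ref{lem: same-monomial-rowcol}. We take this as given from the theorem.
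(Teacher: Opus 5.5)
Your proof is correct and takes the same route as the paper, which derives the corollary from Theorem~\ref{thm: coeff} together with the observation that every $\fS_P$ is a row permutation of $\fS$. Your explicit converse supplies a step the paper leaves implicit: taking $P=\pi^{-1}\cdot T$ gives $T_P=T$ and $\fS_P=\pi\cdot\fS$, so every row permutation of $\fS$ occurs as some $\fS_P$, which is what the ``only if'' direction needs.
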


Note that the sets $A(\fS_P)$ may have different sizes and so the coefficients of different monomials in the higher Specht polynomial may be different as well, even when all of the elements $\tau$ are even.

\begin{example}
    In the example above, we have $|H(\fS)|=4$, and there are only even permutations in $A(\fS)$ with $|A(\fS)|=3$, so the coefficient of $x_T^\fS=x_2x_3x_5x_7^2$ in $F_T^\fS$ is $4\cdot 3=12$.  However, if we consider the $T$-snaking $$P=\young(246,1375),$$ then $$\fS_P=\young(100,0121)$$ and we now have $A(\fS_P)=\{\mathrm{id},(12)(34)\}$ so the coefficient of $x_P^\fS=x_2x_3x_7x_5^2$ is $4\cdot 2=8$.
\end{example}

We now examine cases in which there are odd permutations $\tau \in A(\fS_P)$.

\begin{example}
 Suppose $$\fS=\young(1200,0122)$$ and we again use the standard box labeling $T=\young(2468,1357)$. Then $$A(\fS)=\{\mathrm{id},(12)(34)(56),(12)(34)(78)\}$$   which has two odd permutations and one even.  So, there is an odd permutation but the coefficient does not die.  In fact, the coefficient of $x_T^\fS$ in $F_T^\fS$ is $$4(1-1-1)=-4.$$
\end{example}

\begin{corollary}\label{cor:odd}
    If there are no odd permutations in $A(\fS_P)$ for some $T$-snaking $P$ then $F_T^\fS$ is nonzero.
\end{corollary}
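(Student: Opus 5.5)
Corollary~\ref{cor:odd} follows directly from Theorem~\ref{thm: coeff}, applied to the given $T$-snaking $P$. By the lemma expressing $F_T^\fS=\sum_{P\in\Snake(T)}(-1)^P x_P^\fS$, every monomial of $F_T^\fS$ has the form $x_Q^\fS$ for some $T$-snaking $Q$. Theorem~\ref{thm: coeff} computes the total coefficient of such a monomial after all cancellation. So it suffices to show that for our specific $P$ the coefficient of $x_P^\fS$ is nonzero, since a polynomial with a nonzero coefficient is nonzero.

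Next I would evaluate the formula $\pm|H(\fS)|\cdot\sum_{\tau\in A(\fS_P)}(-1)^\tau$ under the hypothesis. Two observations are needed.

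\begin{enumerate}
\item $H(\fS)$ is a group, since it is the stabilizer of $\fS$ under the box action of the row group, and so it contains the identity. Hence $|H(\fS)|\ge 1$.
\item The identity permutation preserves the row contents of $\fS_P$, so $\mathrm{id}\in A(\fS_P)$ and $A(\fS_P)$ is nonempty.
\end{enumerate}

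By hypothesis every $\tau\in A(\fS_P)$ is even, so each term $(-1)^\tau$ equals $+1$. Therefore $\sum_{\tau\in A(\fS_P)}(-1)^\tau=|A(\fS_P)|\ge 1$. The coefficient of $x_P^\fS$ is then $\pm|H(\fS)|\cdot|A(\fS_P)|\neq 0$, and $F_T^\fS\ne 0$.

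The only step needing care is the bookkeeping behind Theorem~\ref{thm: coeff}: that the equivalence class of $P$ under ``same monomial'' is in bijection with pairs $(\tau,\rho)\in A(\fS_P)\times H(\fS)$ (Lemma~\ref{lem: same-monomial-rowcol}), and that the sign of the snaking attached to $(\tau,\rho)$ is $\pm(-1)^\tau$ with a global sign independent of the pair. Since the theorem is assumed, I would simply cite it, noting only that the overall $\pm$ does not affect nonvanishing. I expect no real obstacle beyond this.
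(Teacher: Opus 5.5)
Your proposal is correct and is the argument the paper intends. The paper gives no separate proof of this corollary and treats it as an immediate consequence of Theorem~\ref{thm: coeff}: when $A(\fS_P)$ contains only even permutations, the coefficient of $x_P^\fS$ is $\pm|H(\fS)|\cdot|A(\fS_P)|$, which is nonzero because the identity lies in both $H(\fS)$ and $A(\fS_P)$.
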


It was already known \cite{HigherSpechtDiagonal} that if $\fS$ is semistandard then $F_T^\fS$ is nonzero, but our result gives a new immediate proof of this fact.

\begin{corollary}\label{cor: ss nonzero}
    If $\fS$ is semistandard, then $F_T^\fS$ is nonzero.  Moreover, the coefficient of $x_T^\fS$ is positive and is equal to $|H(\fS)|$.
\end{corollary}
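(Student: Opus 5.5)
We apply Theorem \ref{thm: coeff} to the trivial $T$-snaking $P=T$. Since $T_P=T$, the straightening row permutation is the identity, so $\fS_P=\fS$. The coefficient of $x_T^\fS$ in $F_T^\fS$ is therefore $\pm|H(\fS)|\sum_{\tau\in A(\fS)}(-1)^\tau$. The plan is to show that $A(\fS)=\{\mathrm{id}\}$ whenever $\fS$ is semistandard, and then to fix the sign.

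\textbf{Step 1: $A(\fS)$ is trivial.} Let $\tau$ be a column permutation preserving the multiset of entries of every row of $\fS$. We argue by induction on rows from the bottom. The bottom row contains the smallest entry of each column, and by column strictness it is strictly smaller than every other entry of that column. Suppose $\tau$ moves some entry above row $1$ down into row $1$, in column $c$. That entry is strictly larger than the original $\fS(c,1)$. The row-$1$ multiset is preserved, so comparing sorted sequences, or equivalently comparing sums, some other column $c'$ must receive into row $1$ an entry smaller than its original $\fS(c',1)$. But every entry in column $c'$ is at least $\fS(c',1)$, a contradiction. Hence $\tau$ fixes every bottom box. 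Deleting the bottom row leaves a semistandard filling of the remaining shape, with $\tau$ restricted to a column permutation of it, so induction gives $\tau=\mathrm{id}$. The sum argument suffices here: entries moved into row $1$ are all $\ge$ the originals, with strict inequality somewhere, so the row sum would strictly increase.

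\textbf{Step 2: sign and $|H(\fS)|$.} The coefficient is now $\pm|H(\fS)|$, which is nonzero, so $F_T^\fS\neq 0$. For positivity we track the sign directly rather than through the $\pm$ of Theorem \ref{thm: coeff}. By Lemma \ref{lem: same-monomial-rowcol} with $A(\fS)$ trivial, the snakings $P'$ with $x_{P'}^\fS=x_T^\fS$ are exactly $P'=\rho\cdot T$ for $\rho\in H(\fS)$. Each such $P'$ is a row permutation of $T$, so within each column's snake the entries keep their original bottom-to-top order. Hence every $\tau^{(i)}(P')$ is the identity and $(-1)^{P'}=+1$. Summing over the snaking expansion $F_T^\fS=\sum_{P}(-1)^Px_P^\fS$ gives coefficient exactly $|H(\fS)|>0$.

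\textbf{Main obstacle.} The delicate point is Step 1's claim that preserving row contents forces $\tau$ to fix the bottom row. This must be phrased carefully, because $\tau$ may permute boxes within a column among rows that carry equal values in different columns. The row-sum comparison handles this cleanly: in each column the bottom entry is the strict minimum. The same comparison in the induction step requires that after removing the bottom row, each remaining column's new bottom entry is again its strict minimum, which column strictness guarantees.
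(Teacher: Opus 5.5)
Your proof is correct and follows the paper's route: specialize Theorem \ref{thm: coeff} to $P=T$, so that $\fS_P=\fS$, and use the fact that $A(\fS)=\{\mathrm{id}\}$ for semistandard $\fS$. You also fill in two points the paper leaves implicit. First, you actually prove that $A(\fS)$ is trivial, via a row-sum argument that uses only column strictness. Second, you fix the sign through Lemma \ref{lem: same-monomial-rowcol}: every snaking giving $x_T^\fS$ is $\rho\cdot T$ with $\rho\in H(\fS)$, so it has sign $+1$. The paper's appeal to the $\pm$ formula and Corollary \ref{cor:odd} does not by itself give this positivity.
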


\begin{proof}
    For semistandard $\fS$, the alternating $\tau$ set $A(\fS)$ consists only of the identity element, and so it has no odd permutations.  By Corollary \ref{cor:odd} and Theorem \ref{thm: coeff} the result follows.
\end{proof}

Note also that cocharge being greedy (Lemma \ref{lem:greedy}) means that in many cases, cocharge tableaux are semistandard. However, this is not always the case (Section \ref{sec:counterexample}).

\subsection{Two column case}

We now give a necessary and sufficient condition for $F_T^\fS$ to be nonzero when the shape of $\fS$ and $T$ has only two columns.  We start with the following lemma.

\begin{lemma}\label{lem:column-cancel}
    If $\fS_P$ has two identical entries in the same column, then the coefficient of $x_P^\fS$ in $F_T^\fS$ is zero.
\end{lemma}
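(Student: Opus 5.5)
By Theorem \ref{thm: coeff}, the coefficient of $x_P^\fS$ in $F_T^\fS$ is $\pm|H(\fS)|\cdot\sum_{\tau\in A(\fS_P)}(-1)^\tau$. So it suffices to show that this alternating sum over $A(\fS_P)$ vanishes. I will do this with a sign-reversing involution on $A(\fS_P)$, built from the transposition of the two equal entries.

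First I set up the involution. Suppose that in some column of $\fS_P$ the boxes $b\neq b'$ (in canonical box order) carry the same entry. Let $t=(b\,b')$; this is a column permutation of the canonical box ordering, and $t\cdot \fS_P=\fS_P$. Define $\Phi(\tau)=\tau t$ on the set of column permutations. The map $\Phi$ is an involution with no fixed points, and it reverses sign: $(-1)^{\tau t}=-(-1)^\tau$.

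Next I check that $\Phi$ preserves $A(\fS_P)$. We have $(\tau t)\cdot \fS_P=\tau\cdot(t\cdot\fS_P)=\tau\cdot \fS_P$, using that the box action is an action and that $t$ fixes $\fS_P$. So $\tau t$ preserves the content of each row of $\fS_P$ exactly when $\tau$ does. Hence $\Phi$ restricts to a fixed-point-free, sign-reversing involution on $A(\fS_P)$.

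It follows that the elements of $A(\fS_P)$ pair off into couples $\{\tau,\tau t\}$ of opposite sign. Therefore $\sum_{\tau\in A(\fS_P)}(-1)^\tau=0$, and the coefficient of $x_P^\fS$ is zero.

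The only point needing care is the order of composition. I must right-multiply by $t$, not left-multiply, so that $t$ acts on $\fS_P$ first. Left multiplication would require $t$ to fix $\tau\cdot\fS_P$, which need not hold.
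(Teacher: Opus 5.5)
Your proposal is correct and is essentially the paper's argument. Both apply Theorem~\ref{thm: coeff} and pair each $\tau\in A(\fS_P)$ with $\tau\circ(ij)$, which lies in $A(\fS_P)$ because the transposition of the two equal boxes fixes $\fS_P$, so the signed sum cancels. Your explicit check that the box action is a left action, which is why right multiplication is the correct choice, is a detail the paper leaves implicit.
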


\begin{proof}
 Let $i$ and $j$ be the boxes (as in the standard column labeling) of the two identical entries.  Consider the transposition $(ij)$.  For any $\tau\in A(\fS_P)$, we also have $\tau\circ (ij) \in A(\fS_P)$ since $(ij)\cdot \fS_P=\fS_P$.  Thus we can match each $\tau$ with $\tau\circ (ij)$ to pair off the entries of $A(\fS_P)$ into pairs of elements with opposite signs; this shows that the coefficient is $0$. 
\end{proof}

\begin{definition}
    We say that $\fS$ is \textbf{column-separable} if there is a row permutation $\sigma$ such that $\sigma\cdot \fS$ has no pair of equal entries in the same column.
\end{definition}

\begin{proposition}
    Let $\fS$ be a tableau with two column shape $\lambda$. Then for any $T\in \SYT(\lambda)$, $F_T^\fS \neq 0$ if and only if $\fS$ is column-separable.
\end{proposition}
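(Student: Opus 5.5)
Both directions go through Theorem \ref{thm: coeff}: the coefficient of $x_P^\fS$ in $F_T^\fS$ is $\pm|H(\fS)|\sum_{\tau\in A(\fS_P)}(-1)^\tau$. Every $\fS_P$ is a row permutation of $\fS$, and conversely every row permutation of $\fS$ arises as $\fS_P$ for some $T$-snaking $P$ (take $P=\pi^{-1}\cdot T$ for the relevant row permutation $\pi$). So $F_T^\fS\neq 0$ exactly when some row permutation $\fS'$ of $\fS$ has $\sum_{\tau\in A(\fS')}(-1)^\tau\neq 0$.

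\emph{Only if.} Suppose $\fS$ is not column-separable. Then every row permutation $\fS'$ of $\fS$ has two equal entries in a common column. By Lemma \ref{lem:column-cancel}, the coefficient of every monomial $x_P^\fS$ then vanishes, so $F_T^\fS=0$.

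\emph{If.} Suppose $\fS$ is column-separable, and let $\fS'=\sigma\cdot\fS$ have no repeated entry within either column. I claim $A(\fS')=\{\mathrm{id}\}$ for a suitable choice of such $\fS'$; Corollary \ref{cor:odd} then gives $F_T^\fS\neq 0$. Since the shape has two columns, each row has one or two boxes. Write the rows of $\fS'$ as multisets $\{a_r,b_r\}$, with $a_r$ in column 1 and $b_r$ in column 2 when row $r$ has length 2, and $\{a_r\}$ when it has length 1.

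Let $\tau=(\tau_1,\tau_2)\in A(\fS')$, where $\tau_j$ permutes column $j$. Then $\tau$ preserves each row multiset, so for a row $r$ of length one, $\tau$ must place a box with value $a_r$ there. The entries of column 1 are distinct, so that box is the one already there. Hence $\tau_1$ fixes the rows of length one, and $\tau$ restricts to a permutation of the two-box rows, the top part of $\lambda$. On those rows, either $\tau$ keeps the ordered pair $(a_r,b_r)$, or it produces the swapped pair $(b_r,a_r)$, which forces $a_r\neq b_r$ to be nontrivial. Distinctness within columns then says each row $r$ has a well-defined ``predecessor'' $\tau_1^{-1}(r)$, determined by the value needed. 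This forces the following: rows that are permuted nontrivially must form cycles along which column-1 and column-2 values interchange. Concretely, $a_{\tau_1^{-1}(r)}\in\{a_r,b_r\}$, so $a_{\tau_1^{-1}(r)}=b_r$ whenever the row moves.

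The main obstacle is excluding these nontrivial cycles. The first thing I would try is to choose $\sigma$ to minimize, among column-separated row permutations, the number of rows with $a_r>b_r$. One could also use the lexicographic order on column 1, or exploit the freedom that swapping a whole cycle's entries is itself a row permutation. Given a nontrivial $\tau$, the condition $a_{\tau_1^{-1}(r)}=b_r$ says the set of column-1 values on the cycle equals the set of column-2 values there. I would then show that applying the row swaps on that cycle gives another column-separated filling: the column contents are unchanged as sets, so columns stay distinct. If that swap can be chosen to strictly decrease the invariant, minimality is contradicted. I would fall back on the weaker goal that $A(\fS')$ contains no odd permutation, or on a direct count showing the elements of $A(\fS')$ pair with equal signs. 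The key is to find the invariant that makes this reduction strict.
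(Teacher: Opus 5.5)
Your reduction through Theorem \ref{thm: coeff} and your ``only if'' direction are fine and match the paper. The ``if'' direction, however, rests on a false claim. There need not be any column-separated row permutation $\fS'$ with $A(\fS')=\{\mathrm{id}\}$, so no minimality argument can work. Take $\lambda=(2,2)$ and let both rows of $\fS$ be $(0,1)$. The only column-separated row permutations have rows $(0,1)$ and $(1,0)$, in either order. For each of them, the column permutation $(12)(34)$ in the canonical box ordering swaps the two rows in both columns at once, and it preserves both row contents $\{0,1\}$. So $A(\fS')=\{\mathrm{id},(12)(34)\}$ for every admissible $\fS'$. Here $F_T^\fS\neq 0$ (the signed sum is $1+1=2$) only because the nontrivial element is even. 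Your cycle swap just exchanges the two admissible fillings, and both carry a nontrivial $\tau$. The nontrivial cycles genuinely occur and cannot be excluded.

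What you need is your own fallback, and you already have the ingredients; it is exactly the paper's argument. Write $\tau=\tau_1\tau_2$, and regard $\tau_1,\tau_2$ as permutations of the two-box rows; you showed $\tau_1$ fixes the one-box rows. Let $r$ be a two-box row and put $s=\tau_1^{-1}(r)$.
\begin{itemize}
\item If $s=r$, the multiset condition at row $r$ gives $b_{\tau_2^{-1}(r)}=b_r$, so $\tau_2(r)=r$.
\item If $s\neq r$, then $a_s\neq a_r$, so the condition at row $r$ forces $a_s=b_r$. Since $\tau_1(s)=r\neq s$, the same reasoning at row $s$ forces $b_{\tau_2^{-1}(s)}=a_s=b_r$. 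Distinctness in column~2 then gives $\tau_2(r)=s$.
\end{itemize}
In all cases $\tau_2(r)=\tau_1^{-1}(r)$, so $\tau_2=\tau_1^{-1}$ as permutations of rows. Hence $\tau_1$ and $\tau_2$ have the same cycle type, and $\sgn(\tau)=\sgn(\tau_1)\sgn(\tau_2)=1$. Every element of $A(\fS')$ is therefore even. Applying Corollary \ref{cor:odd} to a $T$-snaking $P$ with $\fS_P=\fS'$ then gives $F_T^\fS\neq 0$.
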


\begin{proof}
    If $\fS$ is not column-separable, then Lemma \ref{lem:column-cancel} shows that $F_T^\fS$ is identically $0$.  

    If $\fS$ is column-separable, we may without loss of generality assume that $\fS$ itself has no repeated entries in the same column.   We show that any $\tau\in A(\fS)$ is even, and by Corollary \ref{cor:odd} this will imply that $F_T^\fS\neq 0$.  Indeed, any such $\tau$, acting on $\fS$ under the box action, must fix every element of the first column in a row higher than the height of the second column, in order to preserve the row sets, since the first column of $\fS$ has all distinct entries.  Thus $\tau$ can only nontrivially permute entries in the rows that have two boxes.

    Now consider a cycle of $\tau$ in the first column, say, $(b_1\, b_2\,\cdots\, b_k)$.  If $b_1',b_2',\ldots,b_k'$ are the corresponding entries to the right of $b_1,\ldots,b_k$ respectively in the second column, then in order for $\tau$ to preserve the row sets of $S$ we must have the cycle $(b_1'\, b_2'\,\cdots\, b_k')$ in $\tau$ as well, since the elements of the second column of $\fS$ are distinct as well.  Thus we have a copy of every cycle in the first row, in the second row, and vice versa by a similar argument.  It follows that $\tau$ is even, as desired.
\end{proof}

\begin{example}
    Let
    $$
    \fS_P = 
    \begin{ytableau}
        *(pink) 5 & *(pink)3 \\
        *(pink)4 & *(pink)0 \\
        *(pink)3 & *(pink)4 \\
        2 & 1 \\
        1 & 6 \\
        *(pink)0 & *(pink)5
    \end{ytableau}
    \hspace{0.5cm}
    \fS_{P'} = 
    \begin{ytableau}
        *(pink)3 & *(pink)5 \\
        *(pink)0 & *(pink)4 \\
        *(pink)4 & *(pink)3 \\
        2 & 1 \\
        1 & 6 \\
        *(pink)5 & *(pink)0
    \end{ytableau}
    $$
    where we have highlighted the row transpositions, which are contained in rows $1,4,5,6$. We have that $\tau^{(1)} = (1 3 2 4)$ and $\tau^{(2)} = (1 4 2 3)$, so that $\tau^{(1)} = (\tau^{(2)})^{-1}$.
\end{example}

\subsection{Column separability is not sufficient in general}

Though column separability gives a clean necessary and sufficient condition for higher Specht polynomials to be nonzero for two column shapes, the result does not extend to more than two columns. Suppose $\fS$ is $$\young(012,012)$$ and $T$ is any standard tableau of the same shape. Then $F_T^\fS=0$, but $\fS$ is column separable, since it has the row permutation below: $$\young(012,120)$$

\section{Proof of Conjecture \ref{conj: new higher specht} for three row and hook shapes}\label{sec:proof}

 In this section, we resolve Conjecture \ref{conj: new higher specht} in the case where $\mu$ is a hook shape or a three row partition.  That is, we show that $\BBB_\mu$ is a basis for $R_\mu$ in these cases.

\subsection{Hook shapes}

In \cite{HigherSpechtDiagonal}, a basis for hook-shape Garsia-Haiman modules was given as follows.

\begin{definition}
    Let $\mu = (n-k+1,1^{k-1})$. We define the $\mu$-cocharge tableau of $S\in \SYT_n$, denoted $\ccTab_\mu(S)$,  to be the tableau of the same shape where 
    \begin{itemize}
        \item we have 0 in the squares occupied by $1,2,\dots, n-k+1$ in $S$
        \item the rest of the squares are given by the cocharge labeling on $n-k+1,\dots, n$.
    \end{itemize}
\end{definition}

\begin{definition}
    Let $\mu = (n-k+1,1^{k-1})$. We define the reverse $\mu$-cocharge tableau of $S\in \SYT_n$,  denoted $\ccTab'_\mu(S)$, to be the tableau of the same shape where 
    \begin{itemize}
        \item we have 0 in the squares occupied by $n-k+1,\dots, n$ in $S$
        \item for $n-k+1,\dots, 1$, we calculate the cocharge in reverse.
    \end{itemize}
\end{definition}

Then we define $\mathbf{xy}_T^S = \mathbf{x}_T^{\ccTab_\mu(S)}\mathbf{y}_T^{\ccTab'_\mu(S)}$ and $F_{T}^S(\mathbf{x},\mathbf{y}) = \varepsilon_T \mathbf{xy}_T^S$. Note that all of these definitions depend on $\mu$.

\begin{theorem}[\cite{HigherSpechtDiagonal}]
    Let $\mu = (n-k+1,1^{k-1})$. Then the following set is a higher Specht basis for $\mathrm{DR}_\mu:$
    \begin{align}\label{eq: higher specht hook}
            \{F_{T}^S(\mathbf{x},\mathbf{y}): S,T\in \SYT_n, \sh(S) = \sh(T)\}
    \end{align}

\end{theorem}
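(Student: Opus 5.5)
Since the basis $\mathbf{x}_T^{\ccTab_\mu(S)}\mathbf{y}_T^{\ccTab'_\mu(S)}$ is defined by Young symmetrization, the plan is to reduce to three facts: each $F_T^S(\mathbf{x},\mathbf{y})$ spans a copy of $V_\lambda$, these copies are independent modulo the Garsia--Haiman ideal, and the count is $n!$.

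First, the bookkeeping. For fixed $S$ of shape $\lambda$, the polynomials $F_T^S=\varepsilon_T\mathbf{xy}_T^S$ with $T\in\SYT(\lambda)$ are images of the classical Specht polynomials under the same Young symmetrizer construction. The argument of the Proposition on $B_\lambda^S$ then applies verbatim with a two-variable exponent filling. So as soon as one $F_T^S$ is nonzero in $\mathrm{DR}_\mu$, the set $\{F_T^S\}_T$ spans a copy of $M_\lambda$ and the map $g_T\mapsto F_T^S$ is $\mathfrak{S}_n$-equivariant. The total number of pairs $(S,T)$ with $\sh(S)=\sh(T)$ is $\sum_\lambda (f^\lambda)^2=n!=\dim \mathrm{DR}_\mu$. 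Hence it suffices to show that the spans of the distinct $S$ are independent and nonzero in $\mathrm{DR}_\mu$. As a consistency check, the bidegrees $(\cc$ of $\ccTab_\mu(S)$, $\cc$ of $\ccTab'_\mu(S))$ should reproduce the known hook formula for $\widetilde{H}_\mu(X;q,t)$ as a sum over standard tableaux. This both confirms the count in each isotypic bigraded piece and lets me treat each bidegree and shape separately.

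Second, I will realize $\mathrm{DR}_\mu$ as the span of all partial derivatives of the Garsia--Haiman determinant $\Delta_\mu(\mathbf{x},\mathbf{y})$. Here $f\in I_\mu$ if and only if $f(\partial_x,\partial_y)\Delta_\mu=0$, the two-variable analogue of Lemma~\ref{thm: BG}. Nonvanishing of $F_T^S$ in $\mathrm{DR}_\mu$ then becomes the statement that $F_T^S(\partial)\Delta_\mu\neq 0$. For a hook $\mu=(n-k+1,1^{k-1})$, $\Delta_\mu$ is a determinant whose monomials are $\prod_i x_{\sigma(i)}^{a_i}y_{\sigma(i)}^{b_i}$, where the cells $(a_i,b_i)$ lie either in the arm or in the leg. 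The $\mu$-cocharge tableau encodes the arm part in $x$ and the reverse $\mu$-cocharge tableau encodes the leg part in $y$, with the shared $0$ labels accounting for the corner.

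I expect this nonvanishing step to be the main obstacle. My first attempt would be to compute the coefficient of one chosen monomial $\mathbf{x}^{\alpha}\mathbf{y}^{\beta}$ in $F_T^S(\partial)\Delta_\mu$ using the snaking expansion of $F_T^S$ and the analogue of Theorem~\ref{thm: coeff}. The aim is to show that only the straight snaking $P=T$ contributes, with coefficient $\pm|H|$ as in Corollary~\ref{cor: ss nonzero}. This relies on $\ccTab_\mu(S)$ and $\ccTab'_\mu(S)$ each being semistandard in the relevant entries, by Lemma~\ref{lem:greedy}, so that the alternating $\tau$-set is trivial.

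Third, for independence, I will fix the shape $\lambda$ and the bidegree, and order the relevant tableaux $S$ by the dominance or lexicographic order on the pair of exponent fillings. I will then show a triangularity property: the leading monomial of $F_T^S(\partial)\Delta_\mu$, taken with respect to a term order adapted to $T$, pins down $S$, and does not appear for any $S'$ that is smaller in the order. Triangularity gives independence across the $S$ within each shape and bidegree. Combined with the dimension count, this shows the set is a basis and hence a higher Specht basis.
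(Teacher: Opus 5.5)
This theorem is not proved in the paper. It is quoted from \cite{HigherSpechtDiagonal} and used only as a black box, specialized at $\mathbf{y}=0$ in Lemma~\ref{lem: GH at y=0}, to get the hook case of Conjecture~\ref{conj: new higher specht}. So your outline has to stand on its own. Your reduction is sound: the map $g_T\mapsto F_T^S$ is equivariant, the count is $\sum_\lambda (f^\lambda)^2=n!$, and membership in $I_\mu$ can be tested by $F_T^S(\partial)\Delta_\mu$. But what remains after it is the whole theorem, and neither remaining step is carried out.

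For nonvanishing, Theorem~\ref{thm: coeff} and Corollary~\ref{cor: ss nonzero} only compute coefficients of monomials of $F_T^S$ itself. At best they show $F_T^S\neq 0$ in $\mathbb{C}[\mathbf{x},\mathbf{y}]$, which says nothing about $F_T^S\notin I_\mu$. The coefficient of a monomial in $F_T^S(\partial)\Delta_\mu$ is a signed double sum over snakings $P$ and terms of $\Delta_\mu$. Your target, that only the monomial $\mathbf{xy}_T^S$ (i.e.\ $P=T$ up to $H$) contributes, is unsupported. In the closest analogue it is false for the natural test monomial: in Section~\ref{sec:nonzero}, with $g_U$ in place of $\Delta_\mu$, pairs $(P,\tau)$ with $\tau\neq\mathrm{id}$ and $\xx_P^S\neq\xx_T^S$ do contribute to $\partial_T^S\xx_U$ (Case~2 and Case~3(b) in the proof of Proposition~\ref{prop: nonzero}). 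Propositions~\ref{prop: nonzero} and \ref{prop: tau in C(T)} and Corollary~\ref{cor: same sign} are needed to show all these contributions carry the same sign, and the paper can only complete that argument for three rows. You would have to name the specific term of $\Delta_\mu$ you test against and prove a sign-coherence statement of this kind. The semistandardness you invoke is also false. $\ccTab_\mu(S)$ puts $0$ in the boxes of $1,\ldots,n-k+1$, and these can share a column of $S$ (take $2$ directly above $1$). Lemma~\ref{lem:greedy} concerns cocharge subwords of a word of partition content, not these truncated and reversed labelings.

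For independence, the triangularity step specifies no term order and does not identify the leading monomial of $F_T^S(\partial)\Delta_\mu$. It also gives no reason why that monomial determines $S$ among tableaux of the same shape and bidegree. This is not a routine detail. For $\mu=(1^n)$ one has $\ccTab'_\mu(S)=0$ and $\ccTab_\mu(S)=\cc(S)$, so the statement is exactly Theorem~\ref{thm: coinvariant higher specht}. There, many $S$ of one shape share the same cocharge, and separating their copies of $V_\lambda$ modulo the ideal is precisely the nontrivial part of \cite{ATY}. As it stands, the proposal restates the theorem as two unproved claims rather than proving it.
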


We can make the following observation:
\begin{lemma}\label{lem: GH at y=0}
    Let $\mu = (n-k+1,1^{k-1})$. We have that $F_{T}^S(\mathbf{x},0) \neq 0$ if and only if $\ctype(S)\unrhd \mu$.
\end{lemma}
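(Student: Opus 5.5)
The plan is to compute $F_T^S(\mathbf{x},0)$ directly from the snaking expansion, applied to the two-variable monomial $\mathbf{xy}_T^S$. The proof of the expansion lemma $F_T^\fS=\sum_{P\in\Snake(T)}(-1)^P x_P^\fS$ works verbatim for a monomial in both sets of variables. This gives
$$F_T^S(\mathbf{x},\mathbf{y})=\sum_{P\in \Snake(T)}(-1)^P\, x_P^{\ccTab_\mu(S)}\,y_P^{\ccTab'_\mu(S)},\qquad y_P^{Z'}=\prod_{u} y_{P(u)}^{Z'(u)}.$$
Setting $\mathbf{y}=0$ kills every term unless all exponents $\ccTab'_\mu(S)(u)$ vanish. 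The $y$-exponent filling is the same for every $P$, so there are exactly two possibilities:
\begin{itemize}
\item $\ccTab'_\mu(S)\equiv 0$, in which case $F_T^S(\mathbf{x},0)=F_T^{\ccTab_\mu(S)}$ is a one-variable Young symmetrized higher Specht polynomial;
\item otherwise, $F_T^S(\mathbf{x},0)=0$ identically.
\end{itemize}
So the statement splits into two claims: (a) $\ccTab'_\mu(S)\equiv 0$ if and only if $\ctype(S)\unrhd\mu$; and (b) when this holds, $F_T^{\ccTab_\mu(S)}\neq 0$.

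For (a), unwind the reverse cocharge labeling. Label $n-k+1$ with $0$ and proceed downward to $1$, incrementing whenever the next smaller letter lies to the right in the reading word. All labels are $0$ exactly when $1,2,\ldots,n-k+1$ occur in increasing order left to right in $\rw(S)$. Since $S$ is standard and the reading word reads rows from top to bottom, this forces $1,\ldots,n-k+1$ to lie in the bottom row. By induction: $1$ is in the bottom row, and if $i$ is in the bottom row then $i+1$ must lie to its right in reading order, hence also in the bottom row. As already observed in the proof that $\BBB'_\mu=\BBB_\mu$ for hooks, having $1,\ldots,n-k+1$ in the bottom row is precisely the condition $\ctype(S)\unrhd(n-k+1,1^{k-1})$. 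If one prefers not to rely on that observation, I would verify it with Algorithm \ref{alg: ctype}: the first part of $\ctype(S)$ is at least $n-k+1$ exactly when the initial run of $0$ cocharge labels has length at least $n-k+1$.

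For (b), note that when $1,\ldots,n-k+1$ are in the bottom row, the ordinary cocharge labels of these letters are all $0$. Hence $\ccTab_\mu(S)=\cc(S)$, the usual cocharge tableau of $S$. By Corollary \ref{cor: ss nonzero}, it then suffices to show that $\cc(S)$ is semistandard for a standard tableau $S$.
\begin{itemize}
\item \emph{Rows.} Along a row, the entries increase left to right and labels never decrease as the letter increases, so the row is weakly increasing.
\item \emph{Columns.} If $a$ sits directly above $b$, then $a>b$ and $a$ precedes $b$ in the reading word. Consider the letters $b,b+1,\ldots,a$. Either one of them in the bottom part of the chain steps leftward, or, by standardness, a chain moving only rightward from $b$ cannot reach the box above $b$. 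So at least one increment occurs, and the label of $a$ strictly exceeds that of $b$.
\end{itemize}

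The column-strictness step is where I expect the most care to be needed. The hook-shape Garsia--Haiman setup makes everything else essentially bookkeeping, whereas this step is a small combinatorial argument that has to be made precise. Once it is in place, Corollary \ref{cor: ss nonzero} gives that the coefficient of $x_T^{\cc(S)}$ is $|H(\cc(S))|\neq 0$, which completes the equivalence.
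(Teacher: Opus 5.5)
Your proposal is correct and follows essentially the paper's argument: $F_T^S(\mathbf{x},0)$ survives exactly when $\ccTab'_\mu(S)\equiv 0$, i.e.\ when $1,\ldots,n-k+1$ lie in the bottom row of $S$, and this is equivalent to $\ctype(S)\unrhd\mu$. The one difference is that you prove nonvanishing in the ``if'' direction explicitly, via semistandardness of $\cc(S)$ and Corollary~\ref{cor: ss nonzero}; the paper leaves this step implicit, where it follows because $F_T^S(\mathbf{x},\mathbf{y})$ is then independent of $\mathbf{y}$ and is a nonzero basis element of $\mathrm{DR}_\mu$.
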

\begin{proof}
    Note that $F_{T}^S(\mathbf{x},0) \neq 0$  if and only if $\ccTab'_\mu(S)  = 0$, which means $1,\dots, n-k+1$ all appear in the same row of $S$. This is equivalent to saying the first row of $S$ contains $1,\dots, n-k+1$, thus $S$ is $\mu$-catabolizable and $\ctype(S)\unrhd \mu$.
\end{proof}

A direct corollary of this is the following:
\begin{corollary}\label{cor:hooks}
    Conjecture \ref{conj: new higher specht} is true for hook shapes.
\end{corollary}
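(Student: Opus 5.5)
The plan is to obtain $\BBB_\mu$ from the Garsia--Haiman basis by setting $\mathbf{y}=0$.

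First, recall that $R_\mu$ is the image of $\mathrm{DR}_\mu$ under the specialization $y_i\mapsto 0$. This is the standard fact that $\mathrm{DR}_\mu$ restricted to $y$-degree $0$ is $R_\mu$, with Hilbert series $\widetilde{K}_{\lambda\mu}(q)=\widetilde{H}_\mu(q,0)$. At the level of harmonics, the $y$-degree $0$ part of the Garsia--Haiman space is the Garsia--Procesi harmonic space. Since the basis in \eqref{eq: higher specht hook} is bihomogeneous, its elements with $\mathbf{y}$-degree $0$ form a basis of the $\mathbf{y}$-degree $0$ component, which is $R_\mu$.

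Second, I identify those elements. An element $F_T^S(\mathbf{x},\mathbf{y})$ has $\mathbf{y}$-degree $0$ exactly when $\ccTab'_\mu(S)$ is identically zero. In that case $\mathbf{xy}_T^S=\mathbf{x}_T^{\ccTab_\mu(S)}$, so $F_T^S(\mathbf{x},\mathbf{y})=F_T^S(\mathbf{x},0)$. If some entry of $\ccTab'_\mu(S)$ is positive, then every monomial of $F_T^S$ has positive $\mathbf{y}$-degree and $F_T^S(\mathbf{x},0)=0$. By Lemma \ref{lem: GH at y=0}, the surviving elements are exactly those with $\ctype(S)\unrhd\mu$, i.e. $S\in\SYT^{\unrhd\mu}$.

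Third, for these $S$, the numbers $1,\dots,n-k+1$ lie in the bottom row of $S$. The cocharge labels of $1,\ldots,n-k+1$ in $\rw(S)$ are then all $0$, and the remaining labels are computed by the ordinary cocharge algorithm starting from $n-k+1$. Hence $\ccTab_\mu(S)=\cc(S)$, and $F_T^S(\mathbf{x},0)$ is exactly the ATY polynomial $\varepsilon_T\xx_T^{\cc(S)}$ appearing in $\BBB_\mu$. The higher Specht property (each block indexed by $S$ spans a copy of $V_{\sh(S)}$, equivariantly matched with the Specht polynomials) is inherited directly, since it depends only on the $\varepsilon_T$ construction.

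The main obstacle is the first step: justifying that these elements give a basis of $R_\mu$ itself, rather than merely being nonzero in some quotient. The cleanest route is a counting argument. By Lemma \ref{thm: BG}, an element of the $\mathbf{y}$-free part of $\mathrm{DR}_\mu$ that vanishes in $\mathrm{DR}_\mu$ also lies in $I_\mu$, so the $\mathbf{x}$-only polynomials that are linearly independent in $\mathrm{DR}_\mu$ remain independent modulo $I_\mu$. Their number is $\sum_\lambda f^\lambda\,|\{S\in\SYT(\lambda):\ctype(S)\unrhd\mu\}|=\dim R_\mu$ by the catabolizability formula for $\widetilde{K}_{\lambda\mu}$. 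Finally, I need to check that these elements actually lie in, and span, $R_\mu$, i.e. that no $\mathbf{x}$-only element of $I_\mu$ is nonzero in $\mathrm{DR}_\mu$. This amounts to showing that the $\mathbf{y}$-free part of the Garsia--Haiman ideal equals $I_\mu$, which I would cite as a known fact.
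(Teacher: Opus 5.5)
Your proposal is correct and is essentially the paper's proof: specialize the hook Garsia--Haiman basis at $\mathbf{y}=0$, use Lemma~\ref{lem: GH at y=0} to identify the surviving elements as those with $\ctype(S)\unrhd\mu$, and note $\ccTab_\mu(S)=\cc(S)$ for them, with the paper likewise taking for granted that the $\mathbf{y}$-free part of $\mathrm{DR}_\mu$ is $R_\mu$ (your citation of $J_\mu\cap\mathbb{C}[\mathbf{x}]=I_\mu$, where $\mathrm{DR}_\mu=\mathbb{C}[\mathbf{x},\mathbf{y}]/J_\mu$). One correction to your last paragraph: you have the two inclusions reversed there, since the Bergeron--Garsia inclusion $J_\mu\cap\mathbb{C}[\mathbf{x}]\subseteq I_\mu$ shows that the images \textit{span} $R_\mu$ (it does not by itself preserve independence); together with your count $\sum_\lambda f^\lambda\,|\{S\in\SYT(\lambda):\ctype(S)\unrhd\mu\}|=\dim R_\mu$ this already yields a basis, so the reverse inclusion never needs to be cited.
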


\begin{proof}
    From Lemma \ref{lem: GH at y=0}, we know that the following set is a basis of $R_\mu$ for $\mu =  (n-k+1,1^{k-1})$.
    \[\{F_{T}^S(\mathbf{x},0): S,T\in \SYT_n, \sh(S) = \sh(T), \ctype(S)\unrhd \mu\}.\]

    In this case, we have $F_{T}^S(\mathbf{x},0) = \varepsilon_T\mathbf{x}_T^{\ccTab_\mu(S)}$. Note that if $\ctype(S)\unrhd \mu$, then $1,\dots, n-k+1$ all appear in the first row of $S$, thus $\cc(S) = \ccTab_\mu(S)$. Thus  $F_{T}^S(\mathbf{x},0) = \varepsilon_T\mathbf{x}_T^{\ccTab_\mu(S)} =   \varepsilon_T\mathbf{x}_T^{\cc(S)} = F_T^S$, and we precisely cover the elements of $\BBB_\mu$.  
\end{proof}

\subsection{Three rows}

To prove Conjecture \ref{conj: new higher specht} for three row shapes, we first prove a lemma classifying the tableaux with catabolizability type $\mu=(n,m,r)$.

\begin{lemma}\label{lem:three-row-cat}
   Let $n\ge m\ge r\ge 0$.  There are exactly $r+1$ tableaux $S$ with catabolizability type $\mu=(n,m,r)$.  They are formed by placing $1,\ldots,n$ in the bottom row, $n+1,\ldots,n+m$ in the second row, $n+m+1,\ldots,n+m+i$ in the bottom row for some $i$, and the remaining numbers up in the third row.
\end{lemma}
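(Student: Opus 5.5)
The plan is to prove the two directions separately. Both rely on Blasiak's Algorithm~\ref{alg: ctype}, applied to the cocharge labels of the reading word.

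\textbf{Step 1: the $r+1$ tableaux have type $(n,m,r)$.} Fix $0\le i\le r$ and let $S_i$ be the tableau described in the statement. Its shape is $(n+i,m,r-i)$, which is a partition, and it is standard.

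The reading word reads the top row, then the second row, then the bottom row. From this the cocharge labels are:
\begin{itemize}
\item $1,\dots,n$ receive label $0$;
\item $n+1,\dots,n+m$ lie in the second row, to the left of $n$ in the reading word, so they receive label $1$;
\item $n+m+1,\dots,n+m+i$ lie in the bottom row, to the right of $n+m$, so they also receive label $1$;
\item the remaining top-row entries lie to the left of everything else, so they receive label $2$.
\end{itemize}
We then run Algorithm~\ref{alg: ctype} on the resulting label word. The $n$ zeros fill row $0$. A label-$1$ box can only be placed while row $1$ is shorter than row $0$. A label-$2$ box is blocked while row $2$ is full relative to row $1$, and the labels that fail are incremented and cycled. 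Tracking these passes should give exactly $n$, $m$ and $r$ boxes in rows $0$, $1$ and $2$. The point is that the $i$ bottom-row $1$'s exceed what row $1$ can absorb, get bumped to $2$, and land in row $2$ together with the $r-i$ top-row $2$'s. I expect this bookkeeping to be routine, though it needs a short case check on whether $m=n$.

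\textbf{Step 2: any $S$ of type $(n,m,r)$ is one of these.} Since labels only increase during the algorithm, a box ends up in row $\ge$ its original cocharge label. Having only three rows therefore forces every label to be $\le 2$.

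Next, let $k$ be the number of label-$0$ entries. These are exactly $1,\dots,k$, and they lie in increasing reading order, so they sit at the start of the bottom row. Adding a box to row $0$ is always legal, so every label-$0$ entry is placed in row $0$ immediately. Only label-$0$ entries ever reach row $0$, so $\ctype(S)_1=k$, which gives $k=n$.

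The same reasoning applies one level up. Row $1$ receives only original labels $\le 1$, and a label-$1$ entry is rejected only when row $1$ already has length $n$. Comparing with $\ctype(S)_2=m$ then pins down the label-$1$ entries: they are $n+1,\dots,n+m+i$ for some $i$, and everything larger has label $2$.

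\textbf{Step 3: placement, the main obstacle.} It remains to translate these label conditions into positions in $S$. The facts to establish are:
\begin{itemize}
\item $n+1$ has label $1$, so it lies left of $n$ in the reading word, hence above the bottom row; it must therefore be at the start of row $2$;
\item the label-$1$ run $n+1,\dots,n+m+i$ is increasing in reading order, so it forms a horizontal strip beginning in row $2$;
\item this strip must fill row $2$ to length exactly $m$ before any of it spills into the bottom row;
\item the label-$2$ entries lie left of the last label-$1$ entry, which forces them into row $3$.
\end{itemize}
The third point is where I expect the real difficulty. Label information alone does not obviously forbid a label-$1$ entry from landing in row $3$, or the strip being split differently between rows $1$ and $2$. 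I would try to rule these out by rerunning Algorithm~\ref{alg: ctype} on such a configuration and showing its second or third row comes out wrong.

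If this direct argument becomes too messy, there is a fallback by counting. Summing the identity $\widetilde K_{\lambda\mu}(1)=\#\{T\in\SYT(\lambda):\ctype(T)\unrhd\mu\}$ over $\lambda$ gives $|\SSYT_\mu|=\#\SYT^{\unrhd\mu}$. Combined with Step 1 and induction over the three-row partitions dominating $\mu$, this yields $\#\{\ctype=\mu\}=r+1$, and Step 1 already supplies $r+1$ tableaux of that type.
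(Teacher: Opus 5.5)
\textbf{There is a genuine gap in the necessity direction.} Your Step~1 reaches the right conclusion, and it covers the sufficiency direction, which the paper's proof leaves implicit. Step~3, however, is the whole content of the classification, and it is only announced, not proved. The counting fallback does not replace it. After your induction over $\nu\rhd\mu$, the fallback still needs the identity $|\SSYT_\mu|=\sum_{\nu\unrhd\mu}(\nu_3+1)$, summed over three-row $\nu$. You neither state nor prove this, and it is an independent enumerative fact that does not follow from anything you have established.

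\textbf{The mechanism you describe is wrong, which is why Step~3 looks hard.} In Step~2 you say a label-$1$ entry is rejected only when row $1$ of the diagram already has length $n$. In Step~1 you say the bottom-row $1$'s ``exceed what row $1$ can absorb''. Neither is the actual reason. Algorithm~\ref{alg: ctype} starts at the right end of $\rw(S)$, i.e.\ at the right end of the bottom row of $S$. So every bottom-row entry to the right of $n$ is examined while row $0$ of the diagram is still empty, and it is incremented before any zero is placed. For example, take $\mu=(3,1,1)$ and $i=1$: the entry $5$ has label $1$, and row $1$ would have room for it, yet it ends in row $2$. Taken literally, your Step~2 claim would force $i=0$ whenever $m<n$, contradicting your own Step~1.

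\textbf{With the correct mechanism, Step~3 is short.}
\begin{itemize}
\item The label-$1$ run $n+1,\dots,n+\ell$ is a horizontal strip that starts at the left end of the second row of $S$ and moves weakly downward. So it fills positions $1,\dots,j$ of the second row and then continues in the bottom row.
\item The bottom-row part is read before the zeros, becomes $2$, and never reaches row $1$ of the diagram.
\item The second-row part is read after all $n$ zeros and is accepted in full; column-strictness gives $j\le n$. Hence $\ctype(S)_2=j$, so $j=m$.
\item A label-$2$ entry in the bottom row, or in the second row (necessarily to the right of $n+m$), is read while row $1$ of the diagram is empty. It is then incremented to $3$ and forces a fourth row, which is impossible.
\item The label-$2$ entries also form a horizontal strip, and rows above the second contain only label-$2$ entries. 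So they all lie in the third row of $S$.
\end{itemize}
This is essentially the paper's argument.
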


\begin{example}
    For $\mu=(5,3,3)$, there are four tableaux $S$ with catabolizability type $\mu$, namely 
    $$\young(9{{10}}{{11}},678,12345) \hspace{1cm} \young({{10}}{{11}},678,123459)\hspace{1cm} \young({{11}},678,123459{{10}})\hspace{1cm} \young(678,123459{{10}}{{11}}).$$
    with cocharge tableaux
    $$\young(222,111,00000) \hspace{1cm} \young(22,111,000001)\hspace{1cm} \young(2,111,0000011)\hspace{1cm} \young(111,00000111).$$
\end{example}

\begin{proof}
  If $\ctype(S)=\mu=(n,m,r)$, we must have $1,\ldots,n$ on the bottom row, and $n+1$ on the second row, so that there are exactly $n$ cocharge labels of $0$ for Algorithm \ref{alg: ctype} to produce a partition with first row of size $n$.  Then, we claim $n+1,\ldots,n+m$ must all be on the second row (with cocharge $1$) - if one is in the third row, it will have cocharge $2$ and we will not have enough boxes on the second row of the ctype algorithm output.  If one is in the bottom row, it will also have cocharge $1$ but will increment to $2$ by the time we use it in the algorithm, again leaving us with the wrong number of boxes in the second row.   We also note that $n+m+1$ is not in the second row, since otherwise the second row of $\ctype(S)$ will be at least $m+1$.

  Since we cannot have a cocharge value of $3$, the remaining values $n+m+1,\ldots,n+m+r$ have cocharge $1$ or $2$.  Thus there is some $i$ for which $n+m+1,\ldots,n+m+i$ are on the bottom row (with cocharge $1$) and then $n+m+i+1,\ldots,n+m+r$ are placed left to right in order (but possibly skipping some columns) starting on the second or third row, each with cocharge $2$.  Thus in particular $S$ has at most $3$ rows.
  
   If $n+m+i+1$ is in the second row, then it has cocharge $2$, and by Algorithm \ref{alg: ctype} rule we would encounter the $2$ before having any boxes in the second row, which increments it to $3$, a contradiction.  Thus it is in the third (top) row.   A similar argument now shows that  the remaining entries $n+m+i+2,\ldots,n+m+r$ are also all in the top row, and the result follows.
\end{proof}

\begin{lemma}\label{lem:three-row-distinct}
    Let $\nu$ and $\mu$ be distinct shapes with at most three rows.  Then if $\ctype(S)=\nu$ and $\ctype(S')=\mu$, then $S$ and $S'$ cannot have the same shape and also the same total cocharge.
\end{lemma}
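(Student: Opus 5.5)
The plan is to use the explicit classification in Lemma \ref{lem:three-row-cat}: a tableau whose catabolizability type has at most three rows is completely determined by its type together with one integer parameter. I would show that the shape and the total cocharge of $S$ recover that type. To avoid a clash with the size $n$, write a type with at most three rows as $\nu=(a,b,c)$ with $a\ge b\ge c\ge 0$, padding with zeros if $\nu$ has fewer rows.

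\textbf{Step 1: read off shape and cocharge.} By Lemma \ref{lem:three-row-cat}, any $S$ with $\ctype(S)=(a,b,c)$ has the following form for some $0\le i\le c$. The bottom row contains $1,\ldots,a$ followed by $a+b+1,\ldots,a+b+i$. The second row contains $a+b+1-b,\ldots,a+b$, that is, the $b$ entries $a+1,\ldots,a+b$. The top row contains the remaining $c-i$ entries. Hence
$$\sh(S)=(a+i,\;b,\;c-i).$$
The cocharge tableau, as in the worked example after Lemma \ref{lem:three-row-cat}, is as follows. The entries $1,\ldots,a$ get label $0$. The second row gets label $1$. The $i$ extra entries in the bottom row get label $1$. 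The top row gets label $2$. I would verify these labels directly from the cocharge reading of the reading word. The one point needing care is that the entries $a+b+1,\ldots,a+b+i$ in the bottom row lie to the right of the second-row entries in reading order, so their labels stay at $1$. Once top-row entries begin they sit to the left of everything, so their labels jump to $2$. This gives
$$\cc(S)=b+i+2(c-i)=b+2c-i.$$

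\textbf{Step 2: invert.} Write $\sh(S)=\lambda=(\lambda_1,\lambda_2,\lambda_3)$. From Step 1 we get $b=\lambda_2$ and $c-i=\lambda_3$. The cocharge then becomes
$$\cc(S)=\lambda_2+c+(c-i)=\lambda_2+\lambda_3+c.$$
So the total cocharge determines $c=\cc(S)-\lambda_2-\lambda_3$. We then recover $i=c-\lambda_3$ and $a=\lambda_1-i=\lambda_1+\lambda_3-c$. Thus $(\sh(S),\cc(S))$ determines $(a,b,c)=\ctype(S)$ uniquely.

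\textbf{Step 3: conclude.} If $S$ and $S'$ had the same shape and the same total cocharge, Step 2 would force $\ctype(S)=\ctype(S')$. This contradicts $\nu\neq\mu$.

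The only step I expect to need real care is the cocharge computation in Step 1. It must be justified uniformly across the boundary cases: $i=0$, $i=c$, $c=0$, and $b=0$ (the last being a one-row shape). In each case the labels must come out exactly as claimed. Everything else is linear algebra on three unknowns.
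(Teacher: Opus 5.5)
Your proposal is correct and follows essentially the paper's argument. Both use Lemma \ref{lem:three-row-cat} to see that the cocharge tableau has only $0$s and $1$s in the bottom row, $1$s in the second row and $2$s in the top row, so that for a fixed shape the total cocharge determines the number $i$ of bottom-row $1$s and hence $\ctype(S)$; your explicit inversion $b=\lambda_2$, $c=\cc(S)-\lambda_2-\lambda_3$, $a=\lambda_1+\lambda_3-c$ is a formula-level version of the paper's verbal reconstruction (first part $=$ number of $0$s, second part $=$ length of the middle row, third part $=$ remaining boxes).
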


\begin{proof}
    This follows from Lemma \ref{lem:three-row-cat}; any $S$ of a fixed shape whose catabolizability type has at most three rows has a cocharge tableau  with only $0$s and $1$s in the bottom row, only $1$s in the second, and only $2$s in the top, and the total cocharge determines how many $1$s are in the bottom row.  We can then reconstruct $S$ uniquely from the cocharge tableau.  We can also reconstruct its catabolizability type uniquely - the first part is the number of $0$s, the second part is the length of the second row, and the third part is the total number of remaining squares.  Thus we can reconstruct the catabolizability type uniquely from the shape and cocharge.
\end{proof}

We now outline the proof of Conjecture \ref{conj: new higher specht} for three rows, noting that a key step---proving that the conjectured basis elements are not identically zero in the quotient $R_\mu$---will be proven in Section \ref{sec:nonzero}.

\begin{proposition}
   If $\mu=(n,m,r)$ has three rows, then $\BBB_\mu$ is a basis of $R_\mu$.
\end{proposition}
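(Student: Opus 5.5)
We argue by dimension counting and a triangularity/degree argument. First, the cardinality of $\BBB_\mu$ is correct: by the Garsia--Procesi formula in catabolizability form, $\widetilde K_{\lambda\mu}(q)=\sum_{S\in\SYT(\lambda),\ \ctype(S)\unrhd\mu}q^{\cc(S)}$. Hence the number of $F_T^S$ with $S\in\SYT^{\unrhd\mu}$ of shape $\lambda$ and cocharge $d$, times $f^\lambda$ choices of $T$, matches the multiplicity of $V_\lambda$ in $(R_\mu)_d$ times $\dim V_\lambda$. Summing gives $|\BBB_\mu|=\dim R_\mu$, so it suffices to show that $\BBB_\mu$ is linearly independent in $R_\mu$.

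Next we decompose by isotypic component and degree. For fixed $S$, the polynomials $\{F_T^S\}_T$ are images of Young-symmetrizer constructions and transform like $\{g_T\}$. So whenever one of them is nonzero in $R_\mu$, they span a copy of $V_{\sh(S)}$ there, and the $F_T^S$ for varying $T$ form a basis of it. Since $F_T^S$ is homogeneous of degree $\cc(S)$ and the $\mathfrak S_n$-action preserves degree, it suffices to show the following. For each shape $\lambda$ and degree $d$, the copies of $V_\lambda$ generated by the $S\in\SYT^{\unrhd\mu}$ of shape $\lambda$ and cocharge $d$ are independent in $R_\mu$. By Lemma \ref{lem:three-row-distinct}, for $\mu$ with three rows, each pair $(\lambda,d)$ is realized by at most one $S$ whose catabolizability type has at most three rows. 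Every $\nu\unrhd\mu$ has at most three rows, so each pair $(\lambda,d)$ is realized by at most one $S\in\SYT^{\unrhd\mu}$. Independence then reduces to showing that each single $F_T^S$ with $S\in\SYT^{\unrhd\mu}$ is nonzero modulo $I_\mu$; irreducibility of the span then finishes the argument.

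The key and hardest step is this nonvanishing. Since $I_\mu\subseteq I_\nu$ for $\nu=\ctype(S)\unrhd\mu$, it suffices to show $F_T^S\notin I_\nu$ where $\nu=\ctype(S)$. By Corollary \ref{cor: nonzero col strict}, this reduces to finding some filling $\widetilde T\in\Tab(\nu)$ with $F_T^S(\partial)g_{\widetilde T}\neq 0$. Lemma \ref{lem:three-row-cat} gives the explicit form of $\cc(S)$: zeros and $i$ ones in the bottom row, ones in the second row, and twos in the top row. In particular $\cc(S)$ is semistandard, so by Corollary \ref{cor: ss nonzero} the polynomial $F_T^S$ is nonzero with a controlled leading monomial $x_T^{\cc(S)}$ of coefficient $|H(\cc(S))|$. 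The plan is to choose $\widetilde T$ of shape $\nu$ whose columns are adapted to $T$ and $\cc(S)$. The variables carrying exponent $2$ (top row of $T$) and exponent $1$ (second row, plus the $i$ extra bottom entries) would be placed in the higher rows of $\widetilde T$, so that the pairing $\langle F_T^S,g_{\widetilde T}\rangle$, i.e. the constant term of $F_T^S(\partial)g_{\widetilde T}$ in degree $\cc(S)$, can be computed by the snaking expansion. Using Theorem \ref{thm: coeff}, we would show that only snakings compatible with $\widetilde T$ contribute, and that these all contribute with the same sign.

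The main obstacle is exactly this sign control. The shapes $\lambda=\sh(S)$ and $\nu$ differ: the $i$ entries with label $1$ sit in the bottom row of $S$ but occupy the third row of $\nu$. So we must verify that the differentiated Specht product does not cancel among monomials of $F_T^S$. Our first attempt would be to pick $\widetilde T$ so that its third-row entries are exactly those $i$ bottom-row variables together with the top-row variables of $T$, and then check via Lemma \ref{lemma: pigeonhole} that this forces a unique pairing up to elements of $H(\cc(S))$.
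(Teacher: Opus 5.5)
\textbf{There is a genuine gap at the central step: the nonvanishing of each $F_T^S$ modulo $I_{\ctype(S)}$ is only sketched as a plan, never proved.} Your reduction up to that point is sound, and it is more direct than the paper's double induction over dominance covers. You get $|\BBB_\mu|=\dim R_\mu$ from the catabolizability form of $\widetilde K_{\lambda\mu}(q)$. Lemmas \ref{lem:three-row-cat} and \ref{lem:three-row-distinct} show that each (shape, degree) pair is realized by at most one $S\in\SYT^{\unrhd\mu}$. So everything reduces to showing $F_T^S\notin I_{\ctype(S)}$. But that statement is the entire difficulty (it occupies all of Section \ref{sec:nonzero}), and you leave it unproved.

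The plan itself also needs repair in three places.

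First, a degree mismatch. Let $\nu=\ctype(S)$ and let $i$ be the number of cocharge-$1$ entries in the bottom row of $S$. Then $\deg g_{\widetilde T}=\nu_2+2\nu_3$ but $\deg F_T^S=\nu_2+2\nu_3-i$. So $F_T^S(\partial)g_{\widetilde T}$ is a polynomial of degree $i$, not a scalar pairing. You must single out a degree-$i$ monomial to track. The paper uses $\partial_T^S\xx_U$, a constant times the product of the $i$ cocharge-$1$ variables at the right end of the bottom row of $T$.

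Second, fixing only the third-row content of $\widetilde T$ does not determine $g_{\widetilde T}$; its column sets matter. The paper takes $\widetilde T=U(S,T)$, obtained by running Blasiak's algorithm while recording $T$-labels.

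Third, contributing pairs are not unique up to $H(\cc(S))$. Take $S=T$ to be the standard tableau of shape $(2,2,2)$ with rows $\{1,2\},\{3,4\},\{5,6\}$, so $i=0$. Then each of the $36$ column permutations $\tau$ of $U$ admits $T$-snakings $P$ with $\partial_P^S\tau(\xx_U)=\partial_T^S\xx_U$.

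What must actually be proved is sign coherence. For every such pair $(P,\tau)$ you need $\tau\in\mathcal{C}(T)$, $\tau(\xx_T^S)=\xx_P^S$, and $\tau$ equal to the column permutation recorded by $P$; then $(-1)^P=(-1)^\tau$. Neither Theorem \ref{thm: coeff} (which only concerns coefficients inside $F_T^S$ itself) nor Lemma \ref{lemma: pigeonhole} yields this. The paper first uses the rigidity of the $3\times 3$ exponent-count matrix (Lemma \ref{lem: perms}). This excludes column permutations which, in a height-$3$ column of $U$ with cocharges $0,1,1$, would force an exponent $2$ in $\xx_P^S$ (cases (c) and (d) in the proof of Proposition \ref{prop: nonzero}). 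It then needs Lemmas \ref{lem: fix height 1}, \ref{lem: fix bottom row}, \ref{lem: fix top row} and Proposition \ref{prop: tau in C(T)}.
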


\begin{proof}
    We proceed by double induction, first on $r$, and then on $m$.  For the base case, $r=0$, we are in the two row case, which is known \cite{ChouHanada}.  Now fix $r\ge 1$ and suppose the proposition is true for all partitions with third part $\mu_3<r$, and consider the partitions of the form $(n,m,r)$.  We show that the conjecture holds for these partitions by induction on $m$, starting with the base case $m=r$.

    For $\mu=(n,r,r)$, the only partition that covers $\mu$ in dominance order is $\nu=(n,r+1,r-1)$ (or $\nu=(r+1,r,r-1)$ if $n=r$ as shown at right below). By the induction hypothesis, the proposition holds for $\nu$.
    $$\raisebox{0.5cm}{$\mu =$ } \young(\,\,\,,\,\,\,,\,\,\,\,\,) \hspace{1cm} \raisebox{0.5cm}{$\nu=$ }\young(\,\,,\,\,\,\,,\,\,\,\,\,) \hspace{1cm}\raisebox{0.5cm}{ \text{ OR } }\hspace{1cm}\raisebox{0.5cm}{$\mu =$ } \young(\,\,\,,\,\,\,,\,\,\,) \hspace{1cm} \raisebox{0.5cm}{$\nu=$ }\young(\,\,,\,\,\,,\,\,\,\,)$$
    Then $\BBB_\nu$, which is a basis of $R_\nu$ by the induction step, pulls back to an independent set in $R_\mu$, spanning the same set of $\mathfrak{S}_n$-modules in the same degrees as in $R_\nu$.  We now consider the extra elements of the conjectured basis for $\mu$.  These arise from tableaux $S$ for which $\ctype(S)\dom \mu$ but $\ctype(S)\not\dom\nu$.  Since $\nu$ is the only cover of $\mu$ in dominance order (as covers are formed by moving a corner box from a higher row to the next available lower row), this means $\ctype(S)=\mu$.

    Thus we are looking at the tableaux $S$ whose catabolizability type is $(n,r,r)$.  By Lemma \ref{lem:three-row-distinct}, no tableau $S'$ with the same shape and degree as one of the new $S$ possibilities above can be in $\mathcal{B}_\nu$. So, the resulting higher Specht modules from all $F_T^S$ for each new $S$ are all distinct from each other and from any element of the $R_\nu$ basis.  Since they span distinct $\mathfrak{S}_n$-modules in distinct degrees, it follows that they are independent from each other and the $R_\nu$ basis, and do not die in $R_\mu$ by the work in Section \ref{sec:nonzero} below.  This completes the proof of the base case.

    Now let $m$ be arbitrary and suppose the claim holds for all partitions with a smaller second part than $m$ (with fixed third part $r$).   Let $\mu=(n,m,r)$.  We again have two possibilities based on whether $n=m$ or not, and if so we have one $\nu$ that covers $\mu$ in dominance order, and if not we have two:
    $$\raisebox{0.5cm}{$\mu =$ } \young(\,\,\,,\,\,\,\,\,,\,\,\,\,\,) \hspace{1cm} \raisebox{0.5cm}{$\nu=$ }\young(\,\,\,,\,\,\,\,,\,\,\,\,\,\,)$$
    $$\raisebox{0.5cm}{ \text{ OR } }\hspace{1cm}\raisebox{0.5cm}{$\mu =$ } \young(\,\,\,,\,\,\,\,,\,\,\,\,\,\,) \hspace{1cm} \raisebox{0.5cm}{$\nu^1=$ }\young(\,\,,\,\,\,\,\,,\,\,\,\,\,\,)\hspace{1cm} \raisebox{0.5cm}{$\nu^2=$ }\young(\,\,\,,\,\,\,,\,\,\,\,\,\,\,).$$ Note that in the second case above, the partitions $\nu^1$ and $\nu^2$ are formed by moving one corner box of $\mu$ to the next row below.

    In the first case, the proof proceeds as above, comparing $R_\mu$ to $R_\nu$, and since both are three row shapes we are again done by Lemma \ref{lem:three-row-distinct} and Section \ref{sec:nonzero} below.

    We now analyze the second case, and consider the two quotient maps $R_\mu\to R_{\nu^1}$ and $R_\mu \to R_{\nu^2}$. Then the sets $\BBB_{\nu^1}$ and $\BBB_{\nu^2}$ are bases of $R_{\nu^1}$ and $R_{\nu^2}$ by the induction hypotheses on $r$ and $m$ respectively, and pull back to independent sets in $R_\mu$, with overlap coming from tableaux $S$ whose ctype dominates both $\nu^1$ and $\nu^2$.  We claim that the union of these bases in $R_\mu$ is also independent.  Indeed, Lemma \ref{lem:three-row-distinct} shows that we cannot have an $S$ of ctype $\nu^1$ that matches the shape and degree from anything in $\mathcal{B}_{\nu^2}$, and conversely for those of ctype $\nu^2$ being distinct from those in $\mathcal{B}_{\nu^1}$.  Finally, the new tableaux with catabolizability type $\mu$ in $\mathcal{B}_\mu$ are also distinct from those in the $\nu^1$ and $\nu^2$ bases again by Lemma \ref{lem:three-row-distinct}, and we are done as before. 
\end{proof}

\section{Nonvanishing of the basis in the quotient}\label{sec:nonzero}
We now show that our higher Specht basis of the Garsia-Procesi ring $R_\mu$ does not vanish in $R_\mu$ for $\mu$ having three rows.
 To do this, we will show the following: 
\begin{proposition}\label{thm: F_T^S nonzero}
    If $\ctype(S) = \mu$, there exists a filling $U\in \Tab(\mu)$ such that $F_T^S(\partial)g_{U} \neq 0$. 
\end{proposition}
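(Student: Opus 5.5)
The plan is to reduce the nonvanishing of $F_T^S(\partial)g_U$ to a single explicit coefficient computation, after fixing $S$ and $U$ concretely. Write $\mu=(n,m,r)$. By Lemma \ref{lem:three-row-cat}, $S$ has shape $\lambda=(n+i,m,r-i)$ for some $0\le i\le r$. Its cocharge tableau has $n$ zeros followed by $i$ ones in the bottom row, all ones in the second row, and all twos in the top row. Hence $F_T^S$ is homogeneous of degree $m+2(r-i)+i=m+2r-i$.

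For any $U\in\Tab(\mu)$, $g_U$ has degree $3r+(m-r)=m+2r$. So $F_T^S(\partial)g_U$ is homogeneous of degree $i$. Using the apolar pairing $\langle f,g\rangle=f(\partial)g|_{x=0}$, it suffices to find a monomial $h$ of degree $i$ with
$$\langle h\cdot F_T^S,\,g_U\rangle\neq 0,$$
since $\langle h F,g\rangle=h(\partial)(F(\partial)g)$.

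I would choose $U$ and $h$ from $T$ as follows. Let $b_1,\dots,b_i$ be the entries of $T$ in bottom-row positions $n+1,\dots,n+i$. These lie in columns of height one, because $m\le n$. Let $U\in\Tab(\mu)$ be obtained from $T$ by moving $b_1,\dots,b_i$ to the top row, directly above the $i$ height-two columns in positions $r-i+1,\dots,r$. Take $h=x_{b_1}\cdots x_{b_i}$. Let $Z$ be the filling of $\mu$ with row $k$ constantly $k-1$. Then $h\,x_T^{\cc(S)}=x_U^{Z}$, because the moved letters had exponent $1$ and acquire exponent $2$.

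Recall that $g_U$ is, up to the nonzero constant $\prod\mu_j!$, equal to $\varepsilon_U x_U^{Z}$. Equivalently, $g_U=\sum_{Q}(-1)^Q x_Q^{Z}$, where $Q$ runs over column permutations of $U$, each column contributing a Vandermonde determinant. Since $\langle x^a,x^b\rangle=\delta_{ab}\,a!$, expanding $F_T^S=\sum_{P\in\Snake(T)}(-1)^P x_P^{\cc(S)}$ gives
$$\langle hF_T^S,g_U\rangle=\sum_{(P,Q)}(-1)^P(-1)^Q\,a_Q!.$$
The sum runs over pairs with $h\,x_P^{\cc(S)}=x_Q^Z$. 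All monomials $x_Q^Z$ have exponent multiset $\{0^n,1^m,2^r\}$, so $a_Q!=2^r$ is constant. Thus the task is a signed count of such pairs $(P,Q)$.

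For a matching pair, the exponents in $x_Q^Z$ force the following. Each $b_k$ has exponent $2$ in $x_Q^Z$, so in $P$ it must sit in the bottom-row positions $n+1,\dots,n+i$ of $\lambda$, where $\cc(S)$ equals $1$. The remaining letters must have the same row-exponent in $P$ as in $Q$.

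\textbf{Main obstacle.} The key step is to show that these signed contributions do not cancel. I would group the pairs using the box-action machinery of Section \ref{sec:conditions}, as in Lemma \ref{lem: same-monomial-rowcol} and Theorem \ref{thm: coeff}. Fixing $Q$, the admissible $P$ are snakings whose straightening data lie in a coset of $H(\cc(S))$ times elements of $A(\cdot)$.

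The key structural fact is that $\cc(S)$ and $Z$ are semistandard with constant rows, apart from the $i$ ones in the bottom row. So the relevant $\tau$-sets should consist of permutations that move whole column-blocks simultaneously, as in the proof of Corollary \ref{cor: ss nonzero} and the two-column proposition. Concretely, I expect every contributing pair to satisfy $(-1)^P=(-1)^Q$, with the sign of $Q$'s permutation of a height-three column of $U$ matching the sign of $P$'s snaking of the corresponding columns of $T$.

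I would verify this sign identity first for the basic case: a single height-three column of $U$ against a height-two column plus a singleton of $T$. I would then argue that a general pair factors into such column-local pieces. If the sign matching fails in general, the fallback is to replace $h$ by a lexicographically extremal choice, or to pick $U$ via a different assignment of the $b_k$ to columns, so that only the identity pair $(P,Q)=(T,U)$ survives. In that case the pairing equals $2^r|H(\cc(S))|\neq0$ directly.
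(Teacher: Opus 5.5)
Your setup is sound, and it leads to a shorter proof than the paper's. As written, though, the proposal stops exactly at the decisive step. The sign identity $(-1)^P=(-1)^Q$ is only ``expected,'' with a fallback. The preceding claim that each $b_k$ has exponent $2$ in $x_Q^Z$ is asserted without reason, since a column permutation $Q$ of $U$ could a priori move $b_k$ down. Both gaps close quickly, and none of the box-action, $H$, $A$ machinery is needed.

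By Lemma \ref{lemma: pigeonhole}, each $b_k$ lies in the bottom row of any $P\in\Snake(T)$. So no $b_k$ is in the top row of $P$, and each $b_k$ has exponent $1$ or $2$ in $h\,x_P^{\cc(S)}$. The exponent-$2$ variables are therefore the $r-i$ letters in the top row of $P$, together with those $b_k$ sitting in cocharge-$1$ boxes. Since $x_Q^Z$ has exactly $r$ such variables, all $b_k$ fill boxes $n+1,\dots,n+i$ of the bottom row of $P$, and all lie in the top row of $Q$.

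These are precisely the boxes where $\cc(S)$ differs from the row index. So every other letter has exponent equal to its row in $P$, and matching with $x_Q^Z$ puts it in the same row of $Q$. The columns of your $U$ with the $b_k$ deleted are exactly the columns of $T$, in the same vertical order, and each $b_k$ stays on top of its $U$-column in $Q$. Hence the permutation $Q$ applies to each column of $U$ is the permutation $P$ applies to the corresponding column of $T$, plus possibly a fixed point. This gives $(-1)^P=(-1)^Q$ for every contributing pair. Therefore $\langle hF_T^S,g_U\rangle=2^r N$, where $N\ge 1$ because $(T,U)$ contributes.

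In fact every $P\in\Snake(T)$ that places the $b_k$ in the last $i$ bottom boxes contributes. So the fallback scenario, in which only the identity pair survives (and which would give $2^r$, not $2^r|H(\cc(S))|$), never arises.

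Compared with the paper, both arguments examine the same monomial: the paper's $\partial_T^S\xx_U$ is a scalar multiple of your $h=x_{b_1}\cdots x_{b_i}$, and your apolar pairing is just the coefficient of $h$ in $F_T^S(\partial)g_U$. The difference is the choice of $U$. The paper uses $U(S,T)$ from Blasiak's catabolizability algorithm. Its rows list the corresponding entries of $T$ from right to left, so its columns generally combine entries from different columns of $T$. The paper therefore needs Lemma \ref{lem: perms}, Proposition \ref{prop: nonzero}, Lemmas \ref{lem: fix height 1}--\ref{lem: fix top row}, and Proposition \ref{prop: tau in C(T)}. These force every contributing $\tau$ into $\mathcal{C}(T)$ and identify it with the snaking permutation of $P$. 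Your column-aligned $U$ makes that identification automatic. What the paper's choice buys is a $U$ defined for every $\mu$, tied to its conjectured extension to all shapes. Your $U$ is built directly from the three-row classification of Lemma \ref{lem:three-row-cat} and has no obvious analogue beyond three rows.
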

Using Proposition \ref{thm: F_T^S nonzero} along with Corollary \ref{cor: nonzero col strict}, we get the following result.

\begin{theorem}
    If $S,T\in \SYT_n$ with the same shape and $\ctype(S)\unrhd \mu$, we have that $F_T^S\not\in I_\mu$. 
\end{theorem}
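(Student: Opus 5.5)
The plan is to reduce the statement to Proposition \ref{thm: F_T^S nonzero} by passing from $\mu$ to the actual catabolizability type of $S$, and then to use the containment of Tanisaki ideals along dominance order. Set $\nu:=\ctype(S)$, so that $\nu\unrhd\mu$ by hypothesis. Since dominance can only shorten the partition, $\nu$ has at most as many rows as $\mu$. In the setting of this section ($\mu$ with at most three rows) this means $\nu$ again has at most three rows, so Proposition \ref{thm: F_T^S nonzero} applies to $\nu$ in place of $\mu$.

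The first step is to apply Proposition \ref{thm: F_T^S nonzero} with $\nu$ in the role of $\mu$: since $\ctype(S)=\nu$, we obtain a filling $U\in\Tab(\nu)$ with $F_T^S(\partial)g_U\neq 0$. The second step is to invoke Corollary \ref{cor: nonzero col strict}, with $\lambda=\nu$ and $f=F_T^S$. This corollary rests on Lemma \ref{lemma: col strict garnirs}, which places $g_U$ in $M_\nu$, together with the Bergeron--Garsia characterization, Lemma \ref{thm: BG}. It gives $F_T^S\notin I_\nu$.

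The third step transfers this to $\mu$. As recalled in the preliminaries, $\nu\unrhd\mu$ is equivalent to $I_\mu\subseteq I_\nu$. So if $F_T^S$ lay in $I_\mu$ it would lie in $I_\nu$, contradicting the second step; hence $F_T^S\notin I_\mu$.

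This deduction is formal once Proposition \ref{thm: F_T^S nonzero} is available. The only point needing care is that the proposition is invoked with the exact catabolizability type $\nu$ rather than $\mu$, which is why I reduce to $\nu$ first. All the real difficulty lies in proving Proposition \ref{thm: F_T^S nonzero} itself. For that, I would use Lemma \ref{lem:three-row-cat} to write down $S$ and $\cc(S)$ explicitly: the bottom row of $\cc(S)$ contains only $0$s and $1$s, the second row only $1$s, and the top row only $2$s. I would then choose $U$ so that its columns match the column-differences in $g_U$ against the snakings of $T$. Finally I would use the snaking expansion and Theorem \ref{thm: coeff} to exhibit a single surviving term of $F_T^S(\partial)g_U$, for instance its constant term. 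This is a pairing $\langle F_T^S,g_U\rangle$, which reduces to a signed count of matched monomials, and I expect that to be the main obstacle.
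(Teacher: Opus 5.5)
Your argument is correct and is the paper's proof: pass to $\nu=\ctype(S)$, combine Proposition~\ref{thm: F_T^S nonzero} with Corollary~\ref{cor: nonzero col strict} to get $F_T^S\notin I_\nu$, and conclude from $I_\mu\subseteq I_\nu$.

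One caution about your side sketch of that proposition. The constant term of $F_T^S(\partial)g_U$ can be nonzero only when $\deg F_T^S=\deg g_U$, and this fails as soon as the bottom row of $\cc(S)$ contains a $1$. That is why the paper instead tracks the coefficient of the monomial $\partial_T^S\xx_U$, which is a multiple of the product of the variables indexed by those cocharge-$1$ entries.
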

\begin{proof}
Consider $S\in \SYT_n$ with $\ctype(S)  = \lambda\unrhd \mu$. By Proposition \ref{thm: F_T^S nonzero} and Corollary \ref{cor: nonzero col strict}, we have that $F_T^S\not\in I_\lambda$. 
Since $\lambda \unrhd \mu  \Leftrightarrow I_\mu \subset I_\lambda$, this implies that $F_T^S\notin I_\mu$.
\end{proof}

The rest of section will be devoted to proving Proposition \ref{thm: F_T^S nonzero}.
Throughout we assume $S,T \in \SYT_n$ with $\ctype(S) = \mu$.
First, we check the coefficient of $\xx_T^S$ in $F_T^S$ using results from previous sections.
Since $S\in \SYT_n$, we know that $\cc(S)$ is semistandard.
 An immediate corollary of Corollary \ref{cor: ss nonzero} is the following.

\begin{corollary}
    The coefficient of $\mathbf{x}_T^S$ in $F_T^S$ is $|H(\cc(S))|$.
\end{corollary}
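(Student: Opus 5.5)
The corollary is a direct specialization of Corollary \ref{cor: ss nonzero}. I would apply that result with $\fS=\cc(S)$, since by definition $F_T^S=\varepsilon_T\xx_T^{\cc(S)}$, so that $\xx_T^S$ is the monomial $x_T^{\fS}$ associated to the trivial $T$-snaking $P=T$.

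The steps, in order, are as follows.

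First, I verify that $\cc(S)$ is semistandard for a standard tableau $S$; the paper has already noted this, and I would justify it directly from the cocharge algorithm. Along a row of $S$ the entries increase left to right. If $i<j$ lie in the same row, then every intermediate letter $k$ with $i\le k<j$ either keeps the cocharge label or increases it, so labels weakly increase along rows.

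For columns, suppose $a$ lies directly below $b$ in $S$, so $a<b$. In reading order, which reads rows from top to bottom, $b$ is read before $a$. Now follow the chain $a, a+1,\ldots,b$ in the cocharge algorithm. Some step $k\to k+1$ in this chain must go leftward in reading order, because the chain starts at $a$ and ends at $b$, which lies to the left of $a$ in the reading word. That leftward step increments the label, and no step ever decreases it, so the label of $b$ is strictly larger than that of $a$. This gives strict increase up columns.

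Second, I apply Corollary \ref{cor: ss nonzero} with $\fS=\cc(S)$. Its proof shows that for semistandard $\fS$ the set $A(\fS)$ is $\{\mathrm{id}\}$. By Theorem \ref{thm: coeff}, with $P=T$ and hence $\fS_P=\fS$, the coefficient is $|H(\cc(S))|$.

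Third, I fix the sign. In the snaking expansion, the term for $P=T$ has sign $(-1)^T=+1$. By Lemma \ref{lem: same-monomial-rowcol}, every other snaking contributing the same monomial arises from a pair $(\mathrm{id},\rho)$ with $\rho\in H(\fS)$. Such a row permutation does not change any column order, so each of these terms also has sign $+1$, and the coefficient is exactly $+|H(\cc(S))|$.

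The only point needing care is the strict column increase in the first step; the rest is immediate from the earlier results.
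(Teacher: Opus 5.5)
Your proposal is correct and follows the paper's route: the paper simply notes that $\cc(S)$ is semistandard for standard $S$ and applies Corollary~\ref{cor: ss nonzero} with $\fS=\cc(S)$. Your direct proof of semistandardness via the cocharge chain, and your sign check that every contributing snaking has the form $\rho^{-1}\cdot T$ with $\rho\in H(\cc(S))$ and hence sign $+1$, supply details the paper leaves implicit. The sign check is exactly what upgrades the $\pm$ in Theorem~\ref{thm: coeff} to $+$.
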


\subsection{Constructing \texorpdfstring{$U$}{}}
Now, we will construct a standard filling $U$ of shape $\mu$ such that $\partial_T^S g_U \neq 0$, where $\partial_T^S$ denotes the differential operator obtained by replacing $x_i \mapsto \partial_i$ in $\mathbf{x}_T^S$.  More precisely:

\begin{definition}
    We write $\partial_T^S$ as a shorthand for $x_T^S(\partial)$.
\end{definition}

To do this, we modify Algorithm \ref{alg: ctype} to form a filling $U$ rather than just a partition shape $\nu$ in the following way.

\begin{definition}
    Let $S,T$ be standard Young tableaux of the same shape.  Let $x=\cc(\rw(S))$, and let $U=\emptyset$ be the empty Young tableau.  The \textbf{U-tableau} $U(S,T)$ of the pair $(S,T)$ is formed by modifying Algorithm \ref{alg: ctype} in the following way. We fill the square added when reading $c_i$ by the label $t$ from $T$ that appears in the same box as $c_i$ occurs in $S$.

We define $U(S,T)=U$ to be the output of this algorithm, and note that it is a standard filling of shape $\ctype(S)$.
\end{definition}

\begin{example}\label{ex: cat alg}
Consider \[S = \ytableaushort{6,34,125}, \hspace{2cm} \cc(S) = \ytableaushort{2,11,001}, \hspace{2cm}  T = \ytableaushort{3,25,146}\]
Then we have \begin{align*}
        \cc(\rw(S))=   2 \ 1 \ 1 \ 0 \ 0 \ 1 .
    \end{align*}
    We apply $f$ to $(\cc(w),\emptyset)$ repeatedly until we get an empty word in the first coordinate. 
The position we are reading at each step of the algorithm is underlined. 
The first step is
  \ytableausetup{smalltableaux, centertableaux}
$$( 2 \ 1 \ 1 \ 0 \ 0 \ \mathbf{\underline{1}} \hspace{.1cm}, \hspace{.3cm} \emptyset) \to 
     ( 2 \ 1 \ 1 \ 0 \ \mathbf{\underline{0}} \ 2\hspace{.1cm} , \hspace{.3cm}\emptyset \hspace{.1cm}).$$
Now at the next step, we add a box to $U$. The $0$ we read corresponds to the box that contains $4$ in $T$, hence we add $4$ to $U$.

$$ ( 2 \ 1 \ 1 \ 0 \ \mathbf{\underline{0}} \ 2\hspace{.1cm}, \hspace{.3cm}\emptyset \hspace{.1cm}) \to ( 2 \ 1 \ 1 \  \mathbf{\underline{0}} \ \hspace{.2cm} \ 2 \hspace{.1cm}, \hspace{.3cm} \ytableaushort{4} \hspace{.1cm}).$$
Continuing, we have:
\begin{align*}
    &( 2 \ 1 \ 1 \  \mathbf{\underline{0}} \ \hspace{.2cm} \ 2 \hspace{.1cm}, \hspace{.3cm} \ytableaushort{4} \hspace{.1cm}) \to  ( 2 \ 1 \ \mathbf{\underline{1}} \  \hspace{.2cm} \ \hspace{.2cm} \ 2 \hspace{.1cm},\hspace{.1cm} \ytableaushort{41}\hspace{.1cm}) \to ( 2 \ \mathbf{\underline{1}} \  \hspace{.2cm}  \  \hspace{.2cm} \ \hspace{.2cm} \ 2\hspace{.1cm}, \hspace{.3cm}\ytableaushort{5,41}\hspace{.1cm}) \\ &\to   ( \mathbf{\underline{2}} \ \hspace{.2cm} \  \hspace{.2cm}  \  \hspace{.2cm} \ \hspace{.2cm} \ 2 \hspace{.1cm}, \hspace{.3cm}\ytableaushort{52,41}\hspace{.1cm}) \to ( \hspace{.2cm}\ \hspace{.2cm} \  \hspace{.2cm}  \  \hspace{.2cm} \ \hspace{.2cm} \ \mathbf{\underline{2}}  \hspace{.1cm},\hspace{.3cm} \ytableaushort{3,52,41}\hspace{.1cm}) \to
     ( \hspace{.2cm}\ \hspace{.2cm} \  \hspace{.2cm}  \  \hspace{.2cm} \ \hspace{.2cm} \ \hspace{.2cm} \hspace{.1cm}, \hspace{.3cm}\ytableaushort{36,52,41}\hspace{.1cm}) 
\end{align*}
Thus we conclude $U = \ytableaushort{36,52,41}$ and $\ctype(634125) = (2,2,2)$.
\end{example}

In what follows, we use the following notation: if $b$ is a label in $U$, it also corresponds to an entry $c\in \cc(S)$ that was encountered at the step of the algorithm that added $b$ to $U$.  In this case we write $c=\cc(S)(b)$.

\begin{remark}
    The construction of $U$ is well defined for any partition $\mu$; however, throughout the rest of Section \ref{sec:nonzero} we assume $\mu$ has three rows. 
\end{remark}

We will use the following facts about $U$.

\begin{lemma}\label{lem: facts}
For a three row shape $\mu$ and a pair of SYTs $S,T$ with $\ctype(S)=\mu$, the following hold for the tableau $U=U(S,T)$.
\begin{enumerate}
    \item \label{lemma: box adding} If $b$ appears in row $i$ of $U$, then $\cc(S)(b) \leq i$.
    \item  \label{lem: cocharge consec} If $b_1, b_2$ appear consecutively in a column in $U$ (where $b_1$ is lower), we must have $\cc(S)(b_2) = \cc(S)(b_1)$ or $\cc(S)(b_1)+1$.

    \item \label{lem: cocharge 0}
    Let $\mathbf{b}$ be a column in $U$. Then the monomial $\xx_T^S$ has exactly one variable from column $\mathbf{b}$ with exponent 0. In particular, if $b_1,b_2$ are the entries in rows 0 and 1 of $\mathbf{b}$, then $(\xx_T^S)_{b_1,b_2} = x_{b_1}^0 x_{b_2}^1$.

    \item \label{lem: row2} If $c$ appears in row 2 of $T$, it appears in row 2 of $U$ as well.
\end{enumerate}
    
\end{lemma}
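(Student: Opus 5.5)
The plan is to make everything explicit: by Lemma \ref{lem:three-row-cat} there are only $r+1$ possible $S$ with $\ctype(S)=\mu=(n,m,r)$, so I will compute $\cc(S)$ and run the modified Algorithm \ref{alg: ctype} by hand. After that, each of the four facts should be a direct check. Throughout, rows of $U$ (and of $T$) are indexed $0,1,2$ from the bottom, matching the row indices of Algorithm \ref{alg: ctype}.

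First I fix $i$ as in Lemma \ref{lem:three-row-cat}. Then $S$ has bottom row $1,\ldots,n,n+m+1,\ldots,n+m+i$, middle row $n+1,\ldots,n+m$, and top row the remaining $r-i$ entries. As in the example after that lemma, $\cc(S)$ has bottom row $0^n1^i$, middle row $1^m$ and top row $2^{r-i}$. The reading word reads the top row first and the bottom row last, so
\[
\cc(\rw(S)) = 2^{r-i}\,1^m\,0^n\,1^i .
\]

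Next I run the algorithm, reading from the right.
\begin{enumerate}
\item The final $i$ ones are read while row $0$ is still empty, so each one fails and is incremented to $2$.
\item The $n$ zeros then fill row $0$. They add the entries of $T$ in boxes $n,n-1,\ldots,1$ of its bottom row.
\item The $m$ ones fill row $1$, which is allowed since $m\le n$. They add the entries of the middle row of $T$, read right to left.
\item The $r-i$ twos fill row $2$, which is allowed since $r-i\le m$. They add the top-row entries of $T$, read right to left.
\item The algorithm wraps around to the $i$ incremented values, now equal to $2$. These add the entries of $T$ in bottom-row boxes $n+i,\ldots,n+1$ to row $2$.
\end{enumerate}
This recovers shape $(n,m,r)$ and describes $U$ completely. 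Row $0$ of $U$ has cocharge labels all $0$. Row $1$ has labels all $1$. Row $2$ has labels $2$ (from the top row of $S$) followed by labels $1$ (from the end of the bottom row of $S$). Here "label" means $\cc(S)(b)$, the original unincremented value.

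The four facts then follow by inspection.
\begin{enumerate}
\item Each label in row $j$ of $U$ is at most $j$.
\item Going up a column of $U$, the labels pass from $0$ to $1$, and then from $1$ to either $2$ or $1$.
\item The exponent of $x_b$ in $\xx_T^S$ is exactly $\cc(S)(b)$. In each column of $U$, only the row $0$ entry has exponent $0$, the row $1$ entry has exponent $1$, and the row $2$ entry, if present, has exponent $1$ or $2$. This gives the claimed form $x_{b_1}^0x_{b_2}^1$.
\item The entries in the top row (row $2$) of $T$ are placed in row $2$ of $U$ in step (4).
\end{enumerate}

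I expect the main obstacle to be the bookkeeping in the cyclic algorithm. Two points need care. First, the $i$ trailing ones must indeed be incremented exactly once, and must not instead be placed into row $1$ on the second pass. This holds because they become $2$ on the first pass, and when the algorithm returns to them row $2$ can accept them, since $r-i+i=r\le m$. Second, I must keep track of which label of $T$ lands in which box of $U$. Since the cyclic reading order of the $2$-labelled entries affects which boxes the top-row entries of $T$ occupy, I will double-check the row-$2$ placement in fact (4) against Example \ref{ex: cat alg}.
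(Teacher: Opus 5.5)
Your proposal is correct and follows essentially the same route as the paper: use the classification in Lemma~\ref{lem:three-row-cat} to run the modified catabolizability algorithm explicitly, and observe that row $0$ of $U$ carries cocharge $0$, row $1$ carries $1$, and row $2$ carries $1$ or $2$, from which all four facts follow. You spell out the bookkeeping that the paper leaves implicit; the paper additionally notes that (1) holds directly from the algorithm for any shape, since a label is only ever incremented before its box is placed.
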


\begin{proof}
The first property follows immediately from the definition of the $U$-tableau. 

For (2) and (4), Lemma \ref{lem:three-row-cat} shows that the cocharge values we read in $cc(S)$ start with the $1$s from the bottom row (which we increment to $2$ and do not yet add to $U$) and then the remaining $0$s, $1$s, and $2$s in order, corresponding to adding boxes to the first, second, and third row in $U$.  We then read the incremented values and add boxes to the third row of $U$.  Thus we have cocharge values $0$ in the bottom row of $U$, $1$ in the middle row, and $1$ or $2$ on the top.

Then, (3) follows from the definition of the algorithm combined with property (2).
\end{proof}

Now, let $\mathbf{x}_U := \prod x_i^{\mathrm{row}(i)}$ where again the rows are $0$-indexed. 

\begin{lemma}\label{lem: partial_TS xU}
    We have $\partial_{T}^S \mathbf{x}_U \neq 0$.
\end{lemma}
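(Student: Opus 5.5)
The plan is to reduce the statement to a variable-by-variable comparison of exponents, using property (\ref{lemma: box adding}) of Lemma \ref{lem: facts}. Both $\partial_T^S$ and $\mathbf{x}_U$ are monomials, so the operator factors as a product over the variables $x_1,\ldots,x_n$.

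First I would record the exponents on each side. By definition, $\xx_T^S=\prod_{u} x_{T(u)}^{\cc(S)(u)}$, where $u$ ranges over the boxes of $\sh(S)=\sh(T)$. Since $T$ is standard, each variable $x_b$ appears exactly once, with exponent $\cc(S)(u)$ for the unique box $u$ with $T(u)=b$. The $U$-tableau algorithm adds the label $b=T(u)$ to $U$ exactly at the step where it reads the cocharge entry in box $u$. So, in the paper's notation, the exponent of $x_b$ in $\xx_T^S$ is $\cc(S)(b)$. On the other side, $U$ is a standard filling of shape $\ctype(S)=\mu$ containing each of $1,\ldots,n$ once. Hence $\mathbf{x}_U=\prod_b x_b^{r_U(b)}$, where $r_U(b)$ denotes the $0$-indexed row of $b$ in $U$.

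Next I would factor the operator variable by variable:
$$\partial_T^S\,\mathbf{x}_U=\prod_{b=1}^n \partial_b^{\cc(S)(b)}\,x_b^{r_U(b)} .$$
Each factor is $\frac{r!}{(r-a)!}\,x_b^{r-a}$ if $a\le r$, and $0$ otherwise. So the whole expression is a nonzero scalar multiple of a monomial exactly when $\cc(S)(b)\le r_U(b)$ for every $b$.

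Finally, I would invoke Lemma \ref{lem: facts}(\ref{lemma: box adding}), which gives exactly this inequality. The one point to check is whether $\cc(S)(b)$ means the original cocharge label or the value after it has been incremented in Algorithm \ref{alg: ctype}. The algorithm only ever increases a label, and a box is placed in row $i$ precisely when the current value of the label equals $i$. Therefore the original label is at most the row, in either reading, and the exponent in $\xx_T^S$ is the original label. I expect no real obstacle. The only care needed is this bookkeeping identification between the variable index $b$, the box of $S$, and the label in $U$. Once that is done, the result is $\prod_b \frac{r_U(b)!}{(r_U(b)-\cc(S)(b))!}\,x_b^{r_U(b)-\cc(S)(b)}\neq 0$.
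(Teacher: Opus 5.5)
Your proposal is correct and follows the paper's own argument. The paper likewise compares, for each label $b$ in row $i$ of $U$, the exponent $i$ of $x_b$ in $\mathbf{x}_U$ with the exponent $\cc(S)(b)\le i$ of $\partial_b$ in $\partial_T^S$, via Lemma \ref{lem: facts}(\ref{lemma: box adding}); your bookkeeping on original versus incremented cocharge labels and the explicit falling-factorial coefficient only make explicit what the paper leaves implicit.
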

\begin{proof}
If $b$ appears in row $i$ of $U$, the exponent of $x_b$ in $\mathbf{x}_U$ is $i$.
By Lemma \ref{lem: facts}, we know that $\cc(S)(b)\leq i$. Thus $\partial_b$ appears with exponent at most $i$ in $\partial_{T}^S.$ Since this holds for any choice of $b$, the claim holds.
\end{proof}

\begin{corollary}\label{cor: existence}
    We have $\partial_T^S g_U(\xx)\neq 0$.  Moreover, $\partial_T^S \xx_U$ appears in $\partial_T^S g_U(\xx)$ with a positive nonzero coefficient.
\end{corollary}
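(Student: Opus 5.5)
We need $\partial_T^S g_U(\xx)\neq 0$, with $\partial_T^S\xx_U$ appearing with positive coefficient. The plan is to expand $g_U$ monomially and show that, after differentiating, every surviving monomial equal to $\partial_T^S\xx_U$ comes with the same sign as $\xx_U$.

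First expand $g_U=\prod_{\text{cols}}\prod_{i \text{ above } j}(x_i-x_j)$ as a column-Vandermonde product,
\[
g_U=\sum_{\tau\in\mathcal C(U)} (-1)^{\tau}\,\tau\bigl(\xx_U\bigr),
\]
where $\xx_U=\prod x_i^{\mathrm{row}(i)}$ is exactly the diagonal term; up to a global sign convention, which we fix so the $\tau=\mathrm{id}$ term is $+\xx_U$. Applying the monomial operator $\partial_T^S$ sends each monomial $x^{\alpha}$ to a positive multiple of $x^{\alpha-\beta}$, where $\beta$ is the exponent vector of $\xx_T^S$, or to $0$. Two terms $\tau(\xx_U)$ and $\tau'(\xx_U)$ give the same output monomial only if they were already the same monomial, and distinct $\tau$ give distinct monomials because exponents within a column are distinct. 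So the coefficient of $\partial_T^S\xx_U$ in $\partial_T^S g_U$ is just the positive constant from differentiating $\xx_U$; Lemma \ref{lem: partial_TS xU} guarantees this constant is nonzero, since each exponent $\cc(S)(b)\le \mathrm{row}(b)$ by Lemma \ref{lem: facts}(\ref{lemma: box adding}).

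No other $\tau\neq\mathrm{id}$ contributes the monomial $\partial_T^S\xx_U$: such a contribution would require $\tau(\xx_U)=\xx_U$, which is impossible for $\tau\ne \mathrm{id}$. Hence there is no cancellation, the coefficient is positive, and the nonvanishing follows.

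The only step needing care is confirming that $g_U$ really has the Vandermonde expansion with $\xx_U$ appearing with coefficient $\pm1$, and that the differentiation map is injective on the surviving monomials. Injectivity is clear, since $x^\alpha\mapsto x^{\alpha-\beta}$ is injective on exponents. The essential input is Lemma \ref{lem: partial_TS xU}, so I expect no real obstacle; the facts about cocharge values by rows of $U$ have already done the work.
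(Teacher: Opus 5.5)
Your proposal is correct and follows the paper's proof. Both expand $g_U=\sum_{\tau\in\mathcal{C}(U)}(-1)^\tau\tau(\xx_U)$, note that $\tau(\xx_U)\neq\xx_U$ for $\tau\neq\mathrm{id}$, so no other term can produce $\partial_T^S\xx_U$, and invoke Lemma \ref{lem: partial_TS xU} for nonvanishing.

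One small correction: the sign of the identity term is not a convention you get to fix. It is genuinely $+1$, because choosing the upper variable $x_i$ from every factor $(x_i-x_j)$ yields exactly $\xx_U$. With that, your positivity claim is justified as stated.
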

\begin{proof}
  We know 
  \begin{align}\label{eq: garnir U}
      g_U(\xx) = \sum_{\tau\in \mathcal{C}(U)} (-1)^\tau \tau(\xx_U).
  \end{align}
  
It is clear that for any $\tau\neq \text{id}$ in $\mathcal{C}(U)$, we have $\tau(\xx_U) \neq \xx_U$, which implies $\partial_T^S \xx_U \neq \partial_T^S \tau(\xx_U)$. Thus from Lemma \ref{lem: partial_TS xU}, the term $\partial_T^S \xx_U$ appears in $\partial_T^S g_U(\xx)$ with nonzero coefficient.
  \end{proof}

\subsection{Terms in \texorpdfstring{$F_T^S(\partial)g_U$}{}}
Note that using the $P$-snaking terminology, we can write
\[F_T^S  = \sum_{P\in \Snake(T)} (-1)^P \xx_P^S.\]

Combining this with \eqref{eq: garnir U}, we have
\begin{align}
    F_T^S(\partial) g_U(\xx) &=  \sum_{P\in \Snake(T)} (-1)^P \partial_P^S \left(\sum_{\tau\in \mathcal{C}(U)} (-1)^\tau\tau (\xx_U)\right) \\
    &=  \sum_{P\in \Snake(T)} \sum_{\tau\in \mathcal{C}(U)} (-1)^P(-1)^{\tau}\partial_P^S  \tau (\xx_U). \label{eq:formula}
\end{align}

To prove $F_T^S(\partial)g_U\neq 0$ (Proposition \ref{thm: F_T^S nonzero}), we will show the following.

\begin{theorem} \label{thm: +coeff}
    For $\mu$ a three-row partition, $\partial_T^S \xx_U$ appears in $F_T^S(\partial)g_U(\xx)$ with postive coefficient.
\end{theorem}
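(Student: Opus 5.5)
We work from the expansion \eqref{eq:formula}. Write $m_0:=\partial_T^S\xx_U$, which is a nonzero monomial by Lemma \ref{lem: partial_TS xU}. The coefficient of $m_0$ in $F_T^S(\partial)g_U$ is a signed sum over pairs $(P,\tau)\in \Snake(T)\times\mathcal{C}(U)$ such that $\partial_P^S\tau(\xx_U)$ is a nonzero multiple of $m_0$. Since $\partial_P^S$ is a monomial operator, this happens exactly when the exponent vector of $\tau(\xx_U)$ minus that of $\xx_P^S$ equals $\xx_U$ minus $\xx_T^S$, with all exponents nonnegative. The resulting scalar is then a product of falling factorials $\prod_b e_b!/(e_b-d_b)!$, which is positive. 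So the proof reduces to showing that every contributing pair $(P,\tau)$ has $(-1)^P(-1)^\tau=+1$, and that at least one pair contributes, namely $(T,\mathrm{id})$.

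The first step is to analyze which $\tau\in\mathcal{C}(U)$ can occur. Since $\mu$ has three rows, each column of $U$ has height at most $3$, and $\tau$ permutes the exponents $0,1,2$ within each column of $U$. We need $\tau(\xx_U)$ and $\xx_U$ to have the same multiset of exponents on each column of $U$. We also need the differences against $\xx_P^S$ and $\xx_T^S$ to agree. Summing exponents over a column $\mathbf b$ of $U$ then forces the exponents of $\xx_P^S$ restricted to the variables of $\mathbf b$ to be a permutation, with the same multiset, of those of $\xx_T^S$ on $\mathbf b$. Using Lemma \ref{lem: facts}(\ref{lem: cocharge 0}), the exponent multiset on $\mathbf b$ contains exactly one $0$. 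Using (\ref{lem: cocharge consec}) and (\ref{lemma: box adding}), it is $\{0,1\}$, $\{0,1,1\}$, or $\{0,1,2\}$.

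The key constraint is the nonnegativity $\mathrm{exp}_b(\tau\xx_U)\ge \mathrm{exp}_b(\xx_P^S)$. It forces the unique variable of exponent $0$ in $\xx_P^S$ on $\mathbf b$ to include the one sent by $\tau$ to row $0$. The exponent-$2$ variable, when it exists, must go to row $2$. So on each column $\tau$ is either the identity or swaps two entries carrying equal $S$-exponent $1$, which can only happen in the $\{0,1,1\}$ case. We then pair $P$ with $\tau$. By Theorem \ref{thm: coeff}, $P$ differs from $T$ up to the group $H(\cc(S))$. Since $\cc(S)$ is semistandard, $A(\cc(S))=\{\mathrm{id}\}$, so $P$ is obtained from $T$ by row permutations inside equal-exponent blocks of $\cc(S)$, followed by column straightening.

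The main obstacle is the sign matching: showing that the column sign $(-1)^P$ and $(-1)^\tau$ cancel. For this we use the explicit three-row description of $\cc(S)$ from Lemma \ref{lem:three-row-cat}: $0$s and then $1$s in the bottom row, all $1$s in the middle row, and all $2$s on top. Together with Lemma \ref{lem: facts}(\ref{lem: row2}), which pins the row-$2$ entries of $T$ into row $2$ of $U$, this means the only freedom is exchanging exponent-$1$ entries between row $0$ (the tail of $1$s) and row $1$ of $T$.

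We would argue as follows. Each such exchange that creates a column transposition in $P$ must be mirrored by a transposition of the same two variables in a column of $U$. Otherwise some exponent-$1$ variable lands in row $0$ of $U$ with $\xx_U$-exponent $0$, which violates nonnegativity. This gives a sign-preserving bijection between the contributing pairs and products of matched transpositions, so each contributing pair has sign $+1$. If the bijection turns out not to be exact, we would instead order the pairs by a dominance or inversion statistic and build a sign-reversing involution on the bad pairs.
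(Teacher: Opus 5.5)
\textbf{There is a genuine gap: your description of which pairs $(P,\tau)$ contribute is wrong, so the sign-matching step is never established.} Your reduction itself is sound. Each contributing pair adds a positive falling-factorial constant times $(-1)^P(-1)^\tau$, and $(T,\mathrm{id})$ contributes, so everything rests on showing $(-1)^P=(-1)^\tau$ for every contributing pair. The errors are as follows.

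First, summing over a column $\mathbf b$ of $U$ only shows that $\xx_P^S$ and $\xx_T^S$ have the same \emph{total} degree in the variables of $\mathbf b$. It does not give the same multiset of exponents. Suppose $\mathbf b$ has entries $b_1,b_2,b_3$ (bottom to top) with $\xx_T^S$-exponents $0,1,1$. If $\tau$ swaps $b_2$ and $b_3$, then $\xx_P^S$ is forced to have exponents $0,2,0$ on $\mathbf b$. This satisfies the column-sum condition and every nonnegativity condition. It is exactly the swap of two exponent-$1$ entries that you single out as the only allowed nontrivial move. Yet it is the configuration (case (c) in the proof of Proposition~\ref{prop: nonzero}) that the paper must exclude by a global count, Lemma~\ref{lem: perms}. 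That lemma says the matrix counting variables by their pair of exponents (in $\partial_T^S\xx_U$ and in the $S$-monomial) is uniquely determined, and such a column would raise its $(1,3)$ entry.

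Second, moves you exclude do occur. On columns where $\partial_T^S\xx_U$ involves none of the column's variables (height two, or exponents $0,1,2$), the monomial condition places no local restriction on $\tau$. Take Example~\ref{ex: cat alg} with $\tau=(4\,5)\in\mathcal{C}(U)$ and the $T$-snaking $P$ with bottom row $1,5,6$, middle row $2,4$, top row $3$. Then $\partial_P^S\tau\xx_U=4x_6=\partial_T^S\xx_U$. Here $\tau$ swaps entries of cocharge $0$ and $1$, and $\xx_P^S=x_2x_3^2x_4x_6\neq\xx_T^S$.

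For the same reason, your appeal to Theorem~\ref{thm: coeff} and $H(\cc(S))$ is misplaced. That result describes snakings with the \emph{same} monomial as a given one. Here the correct relation is only $\xx_P^S=\tau\xx_T^S$.

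As a result, your ``mirroring'' step rests on a false premise and is asserted rather than proved. The fallback sign-reversing involution is not an argument either; in fact no cancellation occurs.

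The missing idea is the paper's. After obtaining $\tau\xx_T^S=\xx_P^S$ from the global count, use the $T$-snaking constraint (Lemma~\ref{lemma: pigeonhole}) together with the explicit three-row form of $\cc(S)$, $T$, $U$ (Lemma~\ref{lem:three-row-cat}). These show that every contributing $\tau$ lies in $\mathcal{C}(T)$ (Proposition~\ref{prop: tau in C(T)}). Within each column of $T$ the $\xx_T^S$-exponents are distinct. So $\tau\xx_T^S=\xx_P^S$ forces $\tau$ to be exactly the permutation by which $P$ rearranges each column of $T$, and this gives $(-1)^\tau=(-1)^P$.
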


This theorem is the goal of the remainder of this section.  Our proof outline is as follows: From Equation \eqref{eq:formula}, it suffices to consider all combinations of $P,\tau$ such that $\partial_P^S  \tau (\xx_U) = \partial_T^S (\xx_U).$
 
    For such a pair, $\partial_P^S \tau\xx_U$ appears with sign $(-1)^P(-1)^\tau$ in $F_T^S(\partial)g_U(\xx)$. We will show below that in this case, we have that  $(-1)^P=(-1)^\tau$, hence any appearence of $\partial_P^S \tau\xx_U$ in $F_T^S(\partial)g_U(\xx)$ has positive sign.  Since there is at least one instance of the monomial by Corollary \ref{cor: existence}, the proof will be complete.

Our main tool towards proving $(-1)^P=(-1)^\tau$ is the following.

\begin{proposition}\label{prop: nonzero}
Given $\tau \in \mathcal{C}(U)$, if $\partial_P^S \tau (\xx_U) = \partial_T^S (\xx_U)$ for some $T$-snaking $P$, then $\tau( \partial_T^S (\xx_U)) =  \partial_T^S (\xx_U)$ and $\tau(\xx_{T}^S) = \xx_P^S$.  Moreover, $\tau$ fixes the elements of the top row of $U$ that have cocharge $1$ according to $S$.
\end{proposition}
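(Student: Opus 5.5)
We work one column $\mathbf{b}$ of $U$ at a time. A $T$-snaking $P$ has the same multiset of exponents as $\xx_T^S$, and $\tau\in\mathcal{C}(U)$ preserves the columns of $U$. Write $\partial_P^S\tau(\xx_U)$ as a product over variables. Each factor is $\partial_b^{e_b}x_b^{f_b}$, where $e_b$ is the exponent of $x_b$ in $\xx_P^S$ and $f_b$ is the row of $\tau^{-1}(b)$ in $U$, so $f_b$ is the exponent of $x_b$ in $\tau(\xx_U)$. The result $\partial_T^S\xx_U$ is, up to a positive scalar, the monomial $\prod_b x_b^{\mathrm{row}_U(b)-\cc(S)(b)}$. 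Equality of monomials therefore forces
\[
f_b-e_b=\mathrm{row}_U(b)-\cc(S)(b)\quad\text{for every } b,
\]
with $e_b\le f_b$ throughout.

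The first step is to sum this identity over a column $\mathbf{b}$ of $U$. The values $f_b$ on $\mathbf{b}$ are a permutation of the values $\mathrm{row}_U(b)$, so $\sum_{b\in\mathbf{b}}e_b=\sum_{b\in\mathbf{b}}\cc(S)(b)$. Now use Lemma \ref{lem: facts}. On each column of $U$, $\cc(S)$ equals $0$ in row $0$, $1$ in row $1$, and $1$ or $2$ in row $2$. The multiset of exponents of $\xx_P^S$ is the content of $\cc(S)$, which consists only of $0$s, $1$s and $2$s. We then show column by column that $e_b=\cc(S)(b)$ for all $b$, so $\xx_P^S=\xx_T^S$, i.e.\ $\xx_P^S=\tau(\xx_T^S)$ for the relevant $\tau$.

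For a fixed column, list the possible $\tau|_{\mathbf{b}}\in\mathfrak{S}_{\le 3}$ and the allowed exponent assignments, subject to $e_b\le f_b$ and the displayed identity. The entry $b_0$ in row $0$ of $U$ with $\tau^{-1}(b_0)$ moved from row $0$ would need $f_{b_0}-e_{b_0}=0-0$. That forces $f_{b_0}=e_{b_0}$, and since every $e$ is at most $2$ the cases are short.

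\begin{itemize}
\item If the top entry has cocharge $2$, the identity gives $f_b-e_b=0$ for every $b$ in the column. Together with the column sum $0+1+2$, the only solutions have $e_b=f_b$. Then $\tau$ may permute the column only in a way that permutes the exponents $0,1,2$ consistently with $\cc(S)$ read through $\tau$. This gives $\tau(\xx_T^S)=\xx_P^S$ on the column, and $\tau$ fixes $\partial_T^S\xx_U$ there because that part of the monomial is $x^0$.
\item If the top entry $b_2$ has cocharge $1$, the differences are $(0,0,1)$ from bottom to top. If $\tau$ moved $b_2$, then some $b$ in row $0$ or $1$ would receive $f_b=2$ with $f_b-e_b\le 0$, forcing $e_b=2$. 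This contradicts the column sum $0+1+1$. Hence $\tau$ fixes $b_2$, which is the final claim of the proposition. On the remaining two entries the same argument as in the first case applies.
\item Columns of height $1$ or $2$ are easier versions of the same case check.
\end{itemize}

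The step I expect to be the real obstacle is justifying that the exponents $e_b$ of $\xx_P^S$ restricted to a column of $U$ are controlled at all. The snaking $P$ is constrained by columns of $T$, not of $U$, so a priori a column of $U$ could receive two exponents $2$ from $P$. To rule this out I would use Lemma \ref{lem: facts}(4) together with Lemma \ref{lemma: pigeonhole}. Entries of row $2$ of $T$, where the cocharge-$2$ labels sit by Lemma \ref{lem:three-row-cat}, are exactly entries of row $2$ of $U$. In a snaking, the exponent $2$ entries of $\cc(S)$ can only be carried by elements from columns of $T$ of height $3$, and each such column contributes exactly one. I would try to match these elements to the height-$3$ columns of $U$, one per column. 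Once the exponent-$2$ placements are pinned down, the exponent-$1$ placements follow from the column sums, and the per-column case check above finishes the proof.
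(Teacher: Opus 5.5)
Your second bullet contains a false step, and that step is the crux of the proposition. Take a column $b_0,b_1,b_2$ of $U$ (rows $0,1,2$) whose top entry has cocharge $1$, and let $\tau$ swap $b_1$ and $b_2$. Then $(f_{b_0},f_{b_1},f_{b_2})=(0,2,1)$, and your identity forces $(e_{b_0},e_{b_1},e_{b_2})=(0,2,0)$. This satisfies $e_b\le f_b$ and has column sum $2=0+1+1$, so there is no contradiction: the two entries other than $\tau(b_2)$ are forced to have $e_b=0$, which keeps the column sum equal to $2$. The $3$-cycle $b_0\mapsto b_1\mapsto b_2\mapsto b_0$ behaves the same way and gives $e=(2,0,0)$. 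In both configurations $\tau$ moves $b_2$, $\tau(\partial_T^S\xx_U)\neq\partial_T^S\xx_U$, and $\xx_P^S\neq\tau(\xx_T^S)$ on that column. So every conclusion of the proposition fails unless these configurations are excluded. They are exactly cases (c) and (d) in the paper's proof. They are consistent column by column, so no argument confined to a single column of $U$ can rule them out.

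What is missing is a global count. You already have the ingredient: the exponent multiset of $\xx_P^S$ is the content of $\cc(S)$. By Lemma \ref{lem: facts}(1), every cocharge-$2$ entry lies in row $2$ of $U$. Hence the number of exponents $2$ in $\xx_P^S$ equals the number of columns of $U$ with cocharges $(0,1,2)$. In each such column your first bullet gives $e_b=f_b$, so each already uses exactly one exponent $2$. None is left for a $(0,1,1)$ column, where moving the top entry would force $e_{\tau(b_2)}=2$. The paper packages this count as the uniqueness of the $3\times 3$ matrix in Lemma \ref{lem: perms}. With it inserted, your column-by-column analysis is essentially the paper's proof.

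The obstacle you anticipate is not the real one. Each $e_b$ is already determined by $\tau$ through your displayed identity. Since the $f_b$ on a column are distinct and $e_b\le f_b$, at most one entry per column can have $e_b=2$. Matching the exponent-$2$ elements one to each height-$3$ column of $U$, as you propose, cannot work. There are fewer cocharge-$2$ entries than height-$3$ columns of $U$ whenever the bottom row of $S$ ends in cocharge-$1$ entries.

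Finally, your sentence deriving $e_b=\cc(S)(b)$ for all $b$, hence $\xx_P^S=\xx_T^S$, is false: if $\tau$ swaps $b_0$ and $b_1$ then $e_{b_0}=1$. Only $\xx_P^S=\tau(\xx_T^S)$ holds, which is what your bullets actually establish.
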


\begin{remark}
    We conjecture that the first statement of Proposition \ref{prop: nonzero} is true for arbitrary shapes $\mu$, which would lead to a proof that all elements of $\BBB_\mu$ are nonzero in the quotient $R_\mu$.  However, here we only prove it for three row shapes.
\end{remark}

\begin{remark}
    In Proposition \ref{prop: nonzero}, it is helpful to keep in mind that $\partial_T^S x_U$, by the analysis in Lemma \ref{lem: facts}, is equal to a constant times the product $x_{a_1}\cdots x_{a_k}$ where $a_1,\ldots,a_k$ are the elements in the bottom row of $T$ at the right end with $\cc(S)(a_i)=1$.    Thus, for instance, the statement $\tau \partial_T^S(x_U)=\partial_T^S(x_U)$ is equivalent to $\tau$ fixing the set of entries $\{a_1,\ldots,a_k\}$, which all occur in the top row of $U$.  Since $\tau$ is a column permutation of $U$, this is equivalent to the statement that it fixes each entry $a_i$.
\end{remark}

To prove Proposition \ref{prop: nonzero}, we will use the following technical lemma.

\begin{lemma}\label{lem: perms}
   There is exactly one $3\times 3$ $\mathbb{N}$-matrix $A$ that satisfies the following:
   \begin{enumerate}[label=(\roman*)]
   \item $A_{2,3} = A_{3,2}=A_{3,3} = 0$
   \item  $A_{i,1} + A_{i,2} + A_{i,3}$ is the number of variables in $\partial_T^S \xx_U$ that appear with exponent $i-1$
   \item  $A_{1,j} + A_{2,j} + A_{3,j}$  is the number of values in $\cc(S)$ with cocharge $j-1$
   \item  $A_{1,1} = \mu_1$, $A_{2,1} + A_{1,2} = \mu_2$, $A_{3,1} + A_{2,2}+ A_{1,3} = \mu_3$.
   \end{enumerate}
\end{lemma}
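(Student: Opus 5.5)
The plan is to compute the row and column sums in (ii) and (iii) explicitly using the structure from Lemma \ref{lem:three-row-cat}. Once these are known, the linear system in (i)--(iv) can be solved directly, which gives both existence and uniqueness. By (i), the only unknowns are $A_{1,1},A_{1,2},A_{1,3},A_{2,1},A_{2,2},A_{3,1}$.

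First, let $i$ denote the number of entries in the bottom row of $S$ with cocharge $1$ (the parameter $i$ of Lemma \ref{lem:three-row-cat}, with $0\le i\le \mu_3$). By Lemma \ref{lem:three-row-cat}, $\cc(S)$ has:
\begin{itemize}
\item $\mu_1$ zeros;
\item $\mu_2+i$ ones;
\item $\mu_3-i$ twos.
\end{itemize}
So the column sums in (iii) are $(\mu_1,\ \mu_2+i,\ \mu_3-i)$.

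For the row sums in (ii), I use the description of $U$ from the proof of Lemma \ref{lem: facts}. Row $0$ of $U$ consists of entries with cocharge $0$, and row $1$ of entries with cocharge $1$. Row $2$ consists of the $i$ entries with cocharge $1$ (coming from the bottom row of $S$) together with $\mu_3-i$ entries with cocharge $2$. Since $\partial_T^S\xx_U$ gives the variable $x_b$ the exponent $\mathrm{row}_U(b)-\cc(S)(b)$, the exponents are $0$ for all entries except the $i$ cocharge-$1$ entries in the top row of $U$, which get exponent $1$ (consistent with the remark after Proposition \ref{prop: nonzero}). Hence the row sums are $(\mu_1+\mu_2+\mu_3-i,\ i,\ 0)$.

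Next, I solve the system step by step, where each step is forced.
\begin{itemize}
\item The third row sum is $0$, and $\mathbb{N}$-entries are nonnegative, so $A_{3,1}=0$.
\item Condition (iv) gives $A_{1,1}=\mu_1$. The first column sum $A_{1,1}+A_{2,1}+A_{3,1}=\mu_1$ then forces $A_{2,1}=0$.
\item From $A_{2,1}+A_{1,2}=\mu_2$ we get $A_{1,2}=\mu_2$.
\item The second row sum $A_{2,1}+A_{2,2}=i$ gives $A_{2,2}=i$.
\item Finally, $A_{3,1}+A_{2,2}+A_{1,3}=\mu_3$ gives $A_{1,3}=\mu_3-i$.
\end{itemize}
This proves uniqueness. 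For existence, I check that the remaining constraints hold: the first row sums to $\mu_1+\mu_2+\mu_3-i$, and the second and third columns sum to $\mu_2+i$ and $\mu_3-i$. All entries are nonnegative because $i\le\mu_3$.

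The main point requiring care is the computation of the row sums. It needs the precise correspondence, read off from Algorithm \ref{alg: ctype} and Lemma \ref{lem:three-row-cat}, between rows of $U$ and cocharge values. The key fact is that the cocharge-$1$ entries from the bottom row of $S$ are the ones incremented and placed in the top row of $U$, while every other entry lands in the row equal to its cocharge. Once that is in place, the rest is bookkeeping.
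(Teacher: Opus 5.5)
Your proof is correct. The uniqueness half is the paper's argument: the same forced elimination of $A_{3,1}, A_{1,1}, A_{2,1}, A_{1,2}, A_{2,2}, A_{1,3}$ in that order, which works for any prescribed marginals.

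The difference is in existence.
\begin{enumerate}
\item You compute the row and column sums explicitly from Lemma \ref{lem:three-row-cat} and the structure of $U$. You then exhibit the solution with rows $(\mu_1,\mu_2,\mu_3-i)$, $(0,i,0)$, $(0,0,0)$, and check the unused equations and nonnegativity.
\item The paper never computes the marginals. It takes $A_{k,l}$ to be the number of variables whose exponent is $k-1$ in $\partial_T^S\xx_U$ and $l-1$ in $\xx_T^S$. Then (ii) and (iii) hold by definition. The two exponents add up to the exponent in $\xx_U$, i.e.\ the row index in $U$; this uses only $\cc(S)(b)\le \mathrm{row}_U(b)$. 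Hence the anti-diagonal sums are the row lengths $\mu_1,\mu_2,\mu_3$ of $U$, and every $A_{k,l}$ with $k+l>4$ vanishes. This gives (iv) and (i).
\end{enumerate}

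Your route gives an explicit formula for the unique matrix. It agrees with the paper's joint-exponent matrix, e.g.\ in the worked example with $\mu=(2,2,2)$, $i=1$, and it makes the later comparison $\tilde A_{1,3}>A_{1,3}$ concrete.

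The paper's route needs none of the three-row classification. It is also exactly the reasoning used to show that the matrix $\tilde A$ built from $P$ and $\tau$ satisfies (i)--(iv) in the proof of Proposition \ref{prop: nonzero}, which is how the lemma is applied. So if you rely on your version there, you still need that observation at the point of application.
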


\begin{proof}
Define $A$ to be a $3\times 3$ $\mathbb{N}$-matrix where 
\[A_{i,j} = \# \text{ of variables  which appear with exponent } i-1 \text{ in } \partial_T^S\xx_U  \text{ and exponent } j-1 \text{ in } \xx_T^S.\]
It is clear that this matrix satisfies conditions (i),(ii),(iii). To see that it satisfies (iv), note that
\[A_{1,1} = \# \text{ of variables  which appear with exponent } 0 \text{ in } \xx_U=\mu_1\]
\[A_{2,1} +A_{1,2}= \# \text{ of variables  which appear with exponent }  1 \text{ in }  \xx_U =\mu_2\]
\[A_{3,1} +A_{2,2}+A_{1,3}= \# \text{ of variables  which appear with exponent } 2 \text{ in }  \xx_U =\mu_3.\]

Thus $A$ satisfies the conditions.  For uniqueness, we can see that conditions (i), (ii) fix entry $A_{3,1}$. Since (iv) fixes entry $A_{1,1}$, we have the entry $A_{2,1}$ must also be fixed by (iii). Thus the first column is fixed.
Continuing in this manner, we see that the whole matrix is fixed.
\end{proof}

\begin{example}
    For the setting of Example \ref{ex: cat alg}, the matrix $A$ is 
$$\begin{pmatrix}
    2 & 2 & 1 \\
    0 & 1 & 0 \\
    0 & 0 & 0
\end{pmatrix}$$
Indeed, we have $x_U=x_2x_3^2x_5x_6^2$ and $x_T^S=x_2x_3^2x_5x_6$.  The monomial $\partial_T^S x_U$ is a constant times $x_6$, so the row sums are $5,1,0$ from top to bottom, the column sums are $2,3,1$ from left to right, and the diagonal sums are $2,2,2$ from top to middle.
\end{example}

\begin{remark}
   Note that Lemma \ref{lem: perms} does not naturally extend to more than 3 rows.
\end{remark}

\begin{proof}[Proof of Proposition \ref{prop: nonzero}]
Consider $\tau$ such that $\partial_T^S \xx_U = \partial_P^S \tau \xx_U$.   We can explicitly check how $\tau$ acts on each column, reducing the proof to the following claim.

 \textbf{Claim:} For each column $\mathbf{b}$ of $U$, we have that $(\tau\partial_T^S \xx_U)_{\mathbf{b}}  = (\partial_T^S\xx_U)_{\mathbf{b}} $ and $(\tau\xx_T^S)_{\mathbf{b}}  = (\xx_P^S)_{\mathbf{b}}$.

We prove this claim by cases based on the height of $\mathbf{b}$. 

\textbf{Case 1.} If $\mathbf{b}$ is of height one, we know that the first condition must hold. We also have that $(\partial_T^S \xx_U)_{\mathbf{b}} = (\partial_P^S \tau \xx_U)_{\mathbf{b}} = (\partial_P^S \xx_U)_{\mathbf{b}}$, which implies that $(\xx_{P}^S)_{\mathbf{b}}= (\xx_T^S)_{\mathbf{b}}= (\tau \xx_T^S)_{\mathbf{b}}$. Thus the claim holds.

\textbf{Case 2.} If $\mathbf{b} = \ytableaushort{{b_2}, {b_1}}$, then $(\xx_U)_{\mathbf{b}} = x_{b_1}^0 x_{b_2}^1$. Furthermore, from Lemma \ref{lem: cocharge 0}, we know that $(\partial_T^S)_{\mathbf{b}} = \partial_{b_1}^0 \partial_{b_2}^1$.   Thus $(\partial_{T}^S\xx_U)_{b_1,b_2} = x_{b_1}^0 x_{b_2}^0$, which implies the first claim holds for any $\tau$.

If $\tau(b_1) = b_2$, then $(\partial_{P}^S\tau \xx_U)_{b_1,b_2} = x_{b_1}^0 x_{b_2}^0$ implies that $(\xx_{P}^S)_{\mathbf{b}}= x_{b_1}^1 x_{b_2}^0 = (\tau \xx_T^S)_{b_1,b_2}$, thus the second claim holds. Otherwise, $\tau$ fixes the column and the second claim holds as well.

\textbf{Case 3.} Now, consider column $\mathbf{b} = \ytableaushort{{b_3}, {b_2}, {b_1}}$ of height 3. 
Then $(\xx_U)_{b_1,b_2,b_3} = x_{b_1}^0 x_{b_2}^1 x_{b_3}^2$. We know from  Lemmas \ref{lem: cocharge consec} and \ref{lem: cocharge 0}  that $(\partial_T^S)_{\mathbf{b}} = \partial_{b_1}^0 \partial_{b_2}^1\partial_{b_3}^2$ or $(\partial_T^S
)_{\mathbf{b}} = \partial_{b_1}^0 \partial_{b_2}^1\partial_{b_3}^1$.

If $(\partial_T^S)_{\mathbf{b}} = \partial_{b_1}^0 \partial_{b_2}^1\partial_{b_3}^2$, then as in Case 2 we have $(\partial_T^S \mathbf{x}_U)_{\mathbf{b}}=x_{b_1}^0x_{b_2}^0x_{b_3}^0$, and so applying any $\tau$ fixes this monomial, and the first claim holds.  For the second claim, note that $\partial_P^S \tau \xx_U=1$ as well, and so $\xx_P^S=\tau \xx_U=\tau \xx_T^S$ in this case. 

Now, if $(\partial_T^S)_{\mathbf{b}} = \partial_{b_1}^0 \partial_{b_2}^1\partial_{b_3}^1$, we have that $(\partial_P^S \tau \xx_U)_{\mathbf{b}}=(\partial_T^S\xx_U)_{\mathbf{b}} = x_{b_1}^0 x_{b_2}^0 x_{b_3}^1$.  Thus $\tau(b_1)$ cannot be $b_3$, since otherwise $\xx_{b_3}$ would have an exponent of $0$ in $\tau \xx_U$ (and hence in $\partial_P^S \tau \xx_U$).

Thus $\tau(b_1) = b_1$ or $b_2$.  Thus, we have the following possible subcases.
\begin{enumerate}[label=(\alph*)]
    \item $\tau$ fixes $\mathbf{b}$
    \item $\tau(b_1) = b_2$, $\tau(b_3) = b_3$.
    \item  $\tau(b_1) = b_1$, $\tau(b_2) = b_3$.
    \item  $\tau(b_1) = b_2$, $\tau(b_2) = b_3$.
\end{enumerate}

The claim is clearly true for (a). For (b), we can see that $\tau$ fixes $(\partial_T^S\xx_U)_{\mathbf{b}}$. We also have that $(\xx_P^S)_{\mathbf{b}} = x_{b_1}^1 x_{b_2}^0 x_{b_3}^1 = (\tau \xx_T^S)_{\mathbf{b}}$. 
Thus the claims hold.

Now for cases (c), (d), we can see that $(\xx_P^S)_{\mathbf{b}} = x_{b_1}^0 x_{b_2}^2 x_{b_3}^0$ (for (c)) and $(\xx_P^S)_{\mathbf{b}} = x_{b_1}^1 x_{b_2}^2 x_{b_3}^0$ (for (d)).  
We claim that cases (c), (d) are impossible. 
Assume that for our choice of $P,\tau$ there is at least one column $\mathbf{b}$ in $U$  that satisfies case (c) or (d).

Define a $3\times 3 \ \mathbb{N}$-matrix $\tilde{A}$ by
\[\tilde{A}_{i,j} =  \# \text{ of variables which appear with exponent } i-1 \text{ in } \partial_P^S\tau(\xx_U)  \text{ and exponent } j-1 \text{ in } \xx_P^S.\]

Since $\partial_T^S\xx_U = \partial_P^S \tau(\xx_U)$ and $\tau \xx_P^S = \xx_T^S$ for some permutation $\tau$,  we know that $\tilde{A}$ both satisfy the conditions outlined in Lemma \ref{lem: perms}.
However the existence of case (c) or (d) implies that
$\tilde{A}_{1,3} > A_{1,3}$ and $\tilde{A}_{1,2}< A_{1,2}$. By Lemma \ref{lem: perms}, we know that such $\tilde{A}_{1,3}$ cannot exist. Thus cases (c), (d) cannot happen.  It also follows that $\tau$ fixes the top row of $U$ (and hence of $T$ by Lemma \ref{lem: facts} (\ref{lem: row2})).
\end{proof}

In what follows, we call $\cc(S)(b)$ the ``cocharge'' of an element $b$ of $T$ or $U$.  We also implicitly use Lemma \ref{lem:three-row-cat} throughout, as the classification of possibilities for the tableaux $S$ and $T$ with catabolizability type $\mu$ having three rows.  We now prove some properties of $\tau$ and $P$ for the situation when $\partial_P^S \tau (\xx_U) = \partial_T^S (\xx_U)$.

\begin{lemma}\label{lem: fix height 1}
     Let $\tau \in \mathcal{C}(U)$ and $\partial_P^S \tau (\xx_U) = \partial_T^S (\xx_U)$ for some $T$-snaking $P$.  Then $\tau$ fixes the elements in columns of height $1$ in $T$.  Moreover, if $a_1,\ldots,a_k$ are the elements with cocharge $1$ at the end of the bottom row in $T$, they remain in the same set of boxes (possibly permuted amongst themselves) in $P$.
\end{lemma}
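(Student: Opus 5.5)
The plan is to combine Proposition \ref{prop: nonzero} with the shape constraints on $T$-snakings from Lemma \ref{lemma: pigeonhole}. By Lemma \ref{lem:three-row-cat}, $S$ has $1,\ldots,\mu_1$ in its bottom row. Its cocharge tableau has $0$s and then $1$s in the bottom row, $1$s in the second row, and $2$s in the third row. The columns of height $1$ in $T$ (equivalently in $S$) sit at the right end of the bottom row. Their entries therefore have cocharge $0$ or $1$, and the cocharge-$1$ entries $a_1,\ldots,a_k$ form a terminal segment of the bottom row.

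\textbf{First claim (height-one columns of $T$).} For the first assertion I would track where these elements go in $U$. Reading $\cc(\rw(S))$ in Algorithm \ref{alg: ctype}, the bottom-row $1$s are incremented to $2$ and later placed in the top row of $U$. The bottom-row $0$s are placed in row $0$ of $U$.

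For the cocharge-$1$ elements $a_i$, Proposition \ref{prop: nonzero} already says that $\tau$ fixes the top-row elements of $U$ with cocharge $1$. These are exactly the $a_i$, together with any incremented bottom-row $1$s, which are the same elements.

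For a height-one element $b$ of $T$ with cocharge $0$, I would argue as follows. By Lemma \ref{lemma: pigeonhole}, $b$ stays in the bottom row in any $T$-snaking $P$, so its exponent in $\xx_P^S$ is $0$ or $1$. Suppose $\tau$ moved $b$ up its column in $U$. Then $b$ would receive exponent $\ge 1$ in $\tau\xx_U$. Using $\tau\xx_T^S=\xx_P^S$ (Proposition \ref{prop: nonzero}), the exponent of $b$ in $\xx_P^S$ would equal the exponent of $\tau^{-1}(b)$ in $\xx_T^S$. I would then compare $\partial_P^S\tau\xx_U$ with $\partial_T^S\xx_U$, which by the remark equals a constant times $x_{a_1}\cdots x_{a_k}$, so $x_b$ must have exponent $0$ in it. Following the case analysis in the proof of Proposition \ref{prop: nonzero}, in a column of $U$ containing $b$ in row $0$, the only allowed moves are Case 2 and Case 3(b), where $b$ goes to row $1$. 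Those moves would force $b$ to have exponent $1$ in $\xx_P^S$, i.e.\ $b$ sits at a bottom-row box of $S$ with cocharge $1$, i.e.\ one of the $a_i$ boxes.

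\textbf{Main obstacle.} This step is where I expect the difficulty. I need a counting argument showing that the boxes of cocharge $1$ in the bottom row are all occupied by the $a_i$ in $P$. That is precisely the second assertion, so I would prove the second assertion first.

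\textbf{Second assertion (the $a_i$ stay in the same boxes).} The $a_i$ are fixed by $\tau$ and have exponent $1$ in $\xx_T^S$. Since $\tau\xx_T^S=\xx_P^S$, each $a_i$ has exponent $1$ in $\xx_P^S$. By Lemma \ref{lemma: pigeonhole}, each $a_i$ must occupy a bottom-row box of $P$, and the only bottom-row boxes with label $1$ in $\cc(S)$ are the last $k$ boxes. There are $k$ of these boxes and $k$ elements $a_i$, so the $a_i$ fill exactly those boxes, possibly permuted.

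\textbf{Completing the first claim.} Given the second assertion, the cocharge-$0$ element $b$ above cannot have exponent $1$ in $\xx_P^S$, because all cocharge-$1$ bottom boxes are taken. Hence $\tau(b)=b$, completing the first claim.
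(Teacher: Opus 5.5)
Your proof is correct and follows essentially the same route as the paper's. Proposition \ref{prop: nonzero} fixes the $a_i$. The identity $\tau\xx_T^S=\xx_P^S$ together with Lemma \ref{lemma: pigeonhole} then forces the $a_i$ into exactly the $k$ cocharge-$1$ boxes of the bottom row of $P$. Finally, a moved cocharge-$0$ height-one entry would need exponent $\geq 1$ in $\xx_P^S$ while sitting in the bottom row with all those boxes already occupied.

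Your detour through the subcases of Proposition \ref{prop: nonzero} is unnecessary, and it is slightly inaccurate. The claim that only Case 2 and Case 3(b) can occur overlooks $U$-columns topped by a cocharge-$2$ entry, where that proof allows any $\tau$. The slip is harmless, though: $\tau^{-1}(b)$ lies in row $1$ or $2$ of $U$ and so has cocharge $\geq 1$, and exponent $2$ is already excluded by your bottom-row bound.
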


\begin{proof} 
    By Proposition \ref{prop: nonzero}, $\tau$ fixes the elements on the top row of $U$ which have cocharge $1$, which are precisely the elements at the right end of the bottom row of $T$ with cocharge $1$, and these are necessarily each columns of height $1$.  So $\tau$ fixes these rightmost columns of $T$.  Moreover, since $\tau x_T^S=x_P^S$ by Proposition \ref{prop: nonzero}, the elements are in the boxes  in the bottom row corresponding to cocharge $1$ in $P$ as well (since they cannot be in a higher row in $P$ by Lemma \ref{lemma: pigeonhole}).

If there are no columns of height $1$ besides these elements, we are done.  Otherwise we have $\mu_1\neq \mu_2$ and there are elements in columns of height $1$ of cocharge $0$. In this case, let $k$ denote the rightmost element of $T$ with cocharge $0$.   Then $k$ is in the bottom left corner of $U$, and then the element above $k$ in $U$ has cocharge $1$ and is not directly above $k$ in $T$.
   
    Assume for contradiction that $\tau$ does not fix $k$. Then $x_k$ has exponent $1$ or $2$ in $\tau x_T^S=x_P^S$ (where the equality is by Proposition \ref{prop: nonzero}).  However, since $P$ is a $T$-snaking, we know that $k$ must be in the bottom row of $P$ by Lemma \ref{lemma: pigeonhole}. Since $\tau$ fixes the entries in the bottom row of $T$ with cocharge 1, this implies that $x_k$ in fact has exponent $0$ in $x_P^S$, which is a contradiction. The same argument holds for all $b$ in bottom row in columns of height 1, and the result follows.
\end{proof}

\begin{lemma}\label{lem: fix bottom row}
    Let $\tau \in \mathcal{C}(U)$ and $\partial_P^S \tau (\xx_U) = \partial_T^S (\xx_U)$ for some $T$-snaking $P$.  If $\mu_1\neq \mu_2$, then $\tau$ fixes the entire bottom row of $T$, and its elements remain in the bottom row in $P$.
\end{lemma}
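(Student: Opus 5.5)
We need $\tau$ to fix the bottom row of $T$ when $\mu_1\neq\mu_2$, and every bottom-row element of $T$ to stay in the bottom row of $P$.

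First, the structure. By Lemma \ref{lem:three-row-cat}, $S$ has $1,\ldots,\mu_1$ in its bottom row. Next come $\mu_1+1,\ldots,\mu_1+\mu_2$ in the second row, then an initial segment of cocharge-$1$ entries at the right end of the bottom row, and the rest in the top row. So the bottom row of $T$ splits into:
\begin{itemize}
\item the $\mu_1$ boxes of cocharge $0$, namely the leftmost $\mu_1$ boxes;
\item the boxes $a_1,\ldots,a_k$ of cocharge $1$ at the right end.
\end{itemize}
Lemma \ref{lem: fix height 1} already handles every element of a column of height $1$ in $T$. This includes all the $a_i$ and the cocharge-$0$ boxes in columns $\lambda_2+1,\ldots,\mu_1$, where $\lambda=\sh(S)$. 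So the remaining task is the cocharge-$0$ bottom-row boxes lying under columns of height $\ge 2$ in $T$.

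Second, the placement in $U$. Running Algorithm \ref{alg: ctype}, the cocharge-$0$ entries are read right to left and fill row $0$ of $U$ from left to right. Hence the rightmost cocharge-$0$ entry $k$ sits in the bottom-left corner of $U$, and a bottom-row entry of $T$ in column $j$ lands in column $\mu_1-j+1$ of $U$. The second row of $U$ has length $\mu_2<\mu_1$. So the bottom-row entries of $T$ in columns $j\le \mu_1-\mu_2$ land in columns of $U$ of height $1$ and are fixed by $\tau$. The entries that need work are those in columns $j>\mu_1-\mu_2$. Their $U$-columns have height $\ge2$, and above them sit cocharge-$1$ (and possibly cocharge-$2$) entries coming from rows $1,2$ of $T$, or from the $a_i$. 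Because $U$ reverses the order, these are generally not the entries directly above them in $T$.

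Third, the induction. I would induct left to right along the bottom row of $T$, repeating the argument of Lemma \ref{lem: fix height 1}. Suppose $b$ is the leftmost bottom-row entry not fixed by $\tau$. By Proposition \ref{prop: nonzero}, $\tau\xx_T^S=\xx_P^S$, so $x_b$ has exponent $1$ or $2$ in $\xx_P^S$. Since $\tau$ preserves the multiset of exponents, some entry from a higher row of $T$ must take exponent $0$ in $P$.
\begin{itemize}
\item Exponent $0$ in $P$ occurs only in the bottom row's cocharge-$0$ boxes. These number exactly $\mu_1$, and $\lambda_1\ge\mu_1$.
\item The column of $T$ containing $b$ has its other entries in rows $1,2$. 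The $T$-snaking condition forces these into distinct rows of $P$, so at most one element of that column can be in the bottom row of $P$.
\item I would count cocharge-$0$ boxes of $P$ against the entries of $T$ that can occupy them. Already-fixed entries take the leftmost ones, and Lemma \ref{lemma: pigeonhole} constrains where the columns of height $\ge2$ can go. The aim is to show the only way to fill all $\mu_1$ cocharge-$0$ slots is with the original bottom-row entries.
\end{itemize}

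The main obstacle is this counting step. It is where $\mu_1\neq\mu_2$ has to enter; presumably, when $\mu_1=\mu_2$, a swap between a bottom-row entry and the entry above it is genuinely possible. To make the count work I would use the matrix $A$ of Lemma \ref{lem: perms}. Its entry $A_{1,1}=\mu_1$ counts the variables with exponent $0$ in both $\partial_T^S\xx_U$ and $\xx_T^S$, and that count is the same for $P$. Combined with the Case 2 and Case 3 analysis in the proof of Proposition \ref{prop: nonzero}, this should force $\tau(b_1)=b_1$ in every $U$-column whose bottom entry is a cocharge-$0$ element. The inequality $\mu_1>\mu_2$ then guarantees at least one such column of height $1$ to anchor the induction. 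Once $\tau$ fixes the bottom row, $\tau\xx_T^S=\xx_P^S$ gives each of these entries exponent $0$ in $P$, or for the $a_i$ places them in cocharge-$1$ bottom boxes by Lemma \ref{lem: fix height 1}. Either way they remain in the bottom row of $P$.
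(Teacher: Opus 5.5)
\textbf{There is a genuine gap here.} Your setup is sound: the splitting of the bottom row, the reversed placement of the cocharge-$0$ entries in row $0$ of $U$, and the anchor supplied by the $\mu_1-\mu_2$ height-$1$ columns of $U$ are all correct. But the step that actually proves the lemma is left open ("this should force"), and the tool you propose for it cannot close it.

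The conditions of Lemma \ref{lem: perms} and the Case 2/Case 3 analysis in Proposition \ref{prop: nonzero} only record multisets of exponents within columns of $U$. Case 2 and Case 3(b) explicitly allow $\tau(b_1)=b_2$. None of these conditions changes when $\mu_1=\mu_2$, and there such swaps really occur. For example, take $\mu=(3,3,2)$ and $S=T$ with rows $123$, $456$, $78$. Let $\tau=(3\,6)$ and let $P$ be $T$ with $3$ and $6$ exchanged; then $\partial_P^S\tau\xx_U=\partial_T^S\xx_U$. So that route uses $\mu_1\neq\mu_2$ only for the anchor and cannot force $\tau(b_1)=b_1$. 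Separately, ``already-fixed entries take the leftmost ones'' is unjustified: a fixed cocharge-$0$ entry only has to sit in \emph{some} exponent-$0$ box of $P$.

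The missing idea is to combine the snaking condition, in its strong form, with the offset between columns of $U$ and columns of $T$. Your second bullet says ``at most one''. What you need is exactly one: by Lemma \ref{lemma: pigeonhole}, a column of $T$ of height $h$ has exactly one element in each of rows $0,\ldots,h-1$ of $P$. The $a_i$ keep their boxes (Lemma \ref{lem: fix height 1}), so the $\mu_1$ exponent-$0$ boxes of $P$ contain exactly one element from each of columns $1,\ldots,\mu_1$ of $T$.

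On the other hand, $\tau\xx_T^S=\xx_P^S$ says these elements are $\tau(b_1),\ldots,\tau(b_{\mu_1})$, where $b_j$ is the bottom entry of column $j$ of $T$. Each $\tau(b_j)$ lies in the $U$-column of $b_j$. The other entries of that $U$-column are in columns $j-(\mu_1-\mu_2)$ and $j-(\mu_1-\lambda_3)$ of $T$ (with $\lambda=\sh(T)$), or are fixed $a_i$. Let $f(j)$ be the column of $T$ containing $\tau(b_j)$. Since $\mu_1>\mu_2\ge\lambda_3$, $f$ is a bijection of $\{1,\ldots,\mu_1\}$ with $f(j)\le j$, and equality holds only if $\tau(b_j)=b_j$. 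Hence $f$ is the identity, and $\tau$ fixes every $b_j$.

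The paper runs the same mechanism as a left-to-right induction over the columns of $U$:
\begin{enumerate}
\item The bottom entry of the current $U$-column is already fixed: by Lemma \ref{lem: fix height 1} for the first $\mu_1-\mu_2$ columns, and by an earlier step otherwise.
\item So the entries $r,s$ above it are fixed or exchanged by $\tau$. They keep nonzero exponents and therefore lie in rows $1,2$ of $P$.
\item The snaking condition and Lemma \ref{lemma: pigeonhole} then push the bottom entry of $r$'s column of $T$ into an exponent-$0$ box of $P$.
\item That entry is the unique exponent-$0$ entry of its $U$-column, so $\tau$ fixes it.
\end{enumerate}
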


\begin{proof}
     Assume $\mu_1\neq \mu_2$, and let $r,s$ be the two elements above $k$ in $U$ (if they exist) in the second and third row respectively.
     For example, we can consider the following tableaux, where $(k,r,s) = (3,5,7)$:
     \begin{align}\label{ex: case 1}
         \cc(S) = \ytableaushort{2,11,0001}, \hspace{1cm} T = \ytableaushort{7,45,1236},\hspace{1cm} U = \ytableaushort{76,54,321}.
     \end{align}
  
     Note that $r$ is the rightmost element in the second row of $T$ and $s$ is the rightmost element in the top row.  
     Since $k$ is fixed, $r,s$ are either fixed or exchanged by $\tau$, so they must remain within the top two rows in the $T$-snaking $P$ as well, to have the same cocharge value in $x_P^S=\tau x_T^S$.      Let $b$ be the entry just below $r$ in $T$ (in the picture above, $b$ is $2$).  If $r,s$ are both in this column as well, then by the definition of $T$-snaking, $b$ remains in the bottom row in $P$.  Otherwise, if $s$ is to the left of $r$ and $b,r$ is a column of height $2$, by Lemma \ref{lemma: pigeonhole}, $r$ must be in row 2 of $P$, and again $b$ is in the bottom row of $P$ with cocharge $0$, and thus fixed by $\tau$.

     Inductively, we can remove $r,s$ from consideration and consider the next column of $U$, and by repeating the above argument we see that the entire bottom row of $T$ is fixed by $\tau$, and must remain in the bottom row in $P$. 
\end{proof}

\begin{lemma}\label{lem: fix top row}
     Let $\tau \in \mathcal{C}(U)$ and $\partial_P^S \tau (\xx_U) = \partial_T^S (\xx_U)$ for some $T$-snaking $P$.  If the top two rows of $T$ differ in length, then $\tau$ fixes the entire top row of $T$.
\end{lemma}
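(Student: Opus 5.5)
We will argue much as in Lemma \ref{lem: fix bottom row}, this time working on the top row. Write $\lambda$ for the common shape of $S$ and $T$; by Lemma \ref{lem:three-row-cat} it has at most three rows. The cocharge tableau $\cc(S)$ has all $2$s in the top row, all $1$s in the middle row, and $0$s followed by some $1$s in the bottom row. If the top two rows of $T$ differ in length, then $\lambda_2>\lambda_3$, so there are columns of height exactly $2$ in $T$.

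First consider the top-row entries $t$ of $T$, all of cocharge $2$. By Lemma \ref{lem: facts}(\ref{lem: row2}), each $t$ lies in row 2 of $U$. The top-row entries with cocharge $1$ according to $S$ (the $a_i$) are already fixed by Proposition \ref{prop: nonzero}. So the only possible motion in a column $\mathbf{b}=(b_1,b_2,b_3)$ of $U$ with $b_3=t$ comes from case (b) of that proof, or the trivial case. Case (b) fixes $b_3$, and the subcases in which $b_3$ has exponent $2$ in $\partial_T^S$ require $\cc(S)(b_3)=2$. In the exponent-$2$ subcase the proof of Proposition \ref{prop: nonzero} only gives $\tau\xx_T^S=\xx_P^S$, so $\tau$ could a priori move $t$ downward, provided the cocharge pattern $(0,1,2)$ is permuted consistently with where $P$ places the entries.

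The main step is therefore to rule out $\tau(t)\neq t$ for $t$ of cocharge $2$. If $\tau$ moved $t$, then $\xx_P^S=\tau\xx_T^S$ would give some entry $b_1$ or $b_2$ of $U$ exponent $2$ in $\xx_P^S$. That entry would then have to sit in the top row of $P$, since only top-row boxes of $S$ carry cocharge $2$. By Lemma \ref{lem: fix bottom row} (when $\mu_1\neq\mu_2$), bottom-row entries of $T$ stay in the bottom row of $P$. That leaves the middle-row entries $b_2$ of $T$ as the only candidates to move up.

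To finish, I would count. The top row of $P$ has exactly $\lambda_3$ boxes, and every column of height $3$ in $T$ must contribute exactly one entry to it by the pigeonhole argument of Lemma \ref{lemma: pigeonhole}. Since $\lambda_2>\lambda_3$, the rightmost column of height $2$ in $T$ cannot reach the top row at all. Processing the columns of height $3$ from right to left, as in the inductive step of Lemma \ref{lem: fix bottom row}, each column's top-row representative in $P$ must have exponent $2$, and this forces it to be $t$ itself, hence $\tau(t)=t$.

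The main obstacle is this last matching. It requires tracking how the columns of $U$ sit relative to the columns of $T$: $U$ is built by the ctype algorithm, and in $U$ a middle-row entry can sit under a top-row entry coming from a different $T$-column. I would handle this by using the explicit description of $T$, $S$, and $U$ from Lemma \ref{lem:three-row-cat}, checking the rightmost column first as in the example \eqref{ex: case 1}, and then peeling off columns inductively. I would also treat the case $\mu_1=\mu_2$ separately, since there Lemma \ref{lem: fix bottom row} is unavailable and I would use Lemma \ref{lem: perms} directly to forbid any exponent-$2$ entry outside the top row.
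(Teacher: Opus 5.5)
Your overall strategy is the paper's. If $\tau(t)\neq t$, then $\tau(t)$ acquires exponent $2$ in $\xx_P^S=\tau\xx_T^S$, so it lies in the top row of $P$. That row contains exactly one entry from each height-$3$ column of $T$, and one handles the top row column by column.

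However, the step that carries the whole lemma, ``this forces it to be $t$ itself,'' is only asserted; you flag it yourself as the obstacle. The missing observation is the explicit column structure of $U$. By the ctype algorithm and Lemma~\ref{lem:three-row-cat}, for $j\le\lambda_3$ the $j$-th column of $U$ is $(k_j,r_j,s_j)$. These are the $j$-th entries from the right of, respectively, the cocharge-$0$ part of the bottom row of $T$, the middle row of $T$, and the top row of $T$. Since $\lambda_3<\lambda_2=\mu_2\le\mu_1$, both $k_j$ and $r_j$ lie in columns of $T$ strictly to the right of the column of $s_j$. Now induct on $j$. Each such column either has height at most $2$, so Lemma~\ref{lemma: pigeonhole} keeps its entries out of the top row of $P$. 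Or it is a height-$3$ column whose single top-row slot in $P$ is already occupied by some $s_{j'}$ with $j'<j$, which is fixed by induction and so keeps exponent $2$. Hence $\tau(s_j)=s_j$.

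This argument treats $b_1=k_j$ and $b_2=r_j$ the same way. So you need neither Lemma~\ref{lem: fix bottom row} nor a separate case $\mu_1=\mu_2$. Moreover, the tool you propose for that case would not work. Lemma~\ref{lem: perms} cannot see rearrangements inside a column of $U$ whose $\partial_T^S$-exponents are $(0,1,2)$. On such a column $\partial_T^S\xx_U$ is constant, so any permutation contributes the same pairs $(0,0),(0,1),(0,2)$ to $\tilde A$. Moving $t$ within such a column therefore leaves $\tilde A$ unchanged, and no contradiction with uniqueness can arise.

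This case does occur and genuinely needs the occupancy argument. Take the paper's example where $T$ has rows $123$, $456$, $78$ and $U$ has rows $321$, $654$, $87$, so $\mu_1=\mu_2$. The second column of $U$ is $(2,5,7)$, and $2$ lies in the height-$3$ column $\{2,5,8\}$ of $T$. What rules out $\tau(7)=2$ is that $8$, fixed at the previous step, already occupies that column's top-row slot in $P$.
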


\begin{proof}     
    Consider again the rightmost elements $r,s$ of the second and third row respectively, so that the first column of $U$ is $k,r,s$ from bottom to top. 
    If $s$ is not fixed by $\tau$, either $r$ or $k$ replaces it, and so either $x_r$ or $x_k$ has exponent $2$ in $\tau x_T^S=x_P^S$.
    But since $r,k$ are in columns of height at most $2$, by Lemma \ref{lemma: pigeonhole}, it is impossible for $r,k$ to be in row 3 of $P$. Thus $\tau$ fixes $s$, and inductively we can move left and make the same argument for each element of the top row.
\end{proof}

We now show $\tau$ acts on the columns of $T$ as well as $U$.

\begin{proposition}\label{prop: tau in C(T)}
    Let $\tau \in \mathcal{C}(U)$ and $\partial_P^S \tau (\xx_U) = \partial_T^S (\xx_U)$ for some $T$-snaking $P$. Then $\tau \in \mathcal{C}(T)$. 
\end{proposition}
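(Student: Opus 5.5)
The plan is to show that every element moved by $\tau$ is sent to an element in the same column of $T$. Since $\tau\in\mathcal{C}(U)$, it suffices to show that each nontrivial cycle of $\tau$, which lives inside a single column of $U$, lies inside a single column of $T$. First I would use Lemma \ref{lem:three-row-cat} to describe the columns of $U$ explicitly in terms of $T$. The bottom row of $U$ consists of the cocharge-$0$ entries of $T$, namely the first $\mu_1$ entries of the bottom row of $T$, read right to left. The middle row of $U$ consists of the second row of $T$, also read right to left. The top row of $U$ consists of the top row of $T$ together with the cocharge-$1$ entries $a_1,\dots,a_k$ at the right end of the bottom row of $T$.

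Next I would collect the fixed points. By Lemma \ref{lem: fix height 1}, $\tau$ fixes every element lying in a column of height $1$ of $T$, including all the $a_i$. If $\mu_1\neq\mu_2$, Lemma \ref{lem: fix bottom row} says $\tau$ fixes the whole bottom row of $T$. If the top two rows of $T$ differ in length, Lemma \ref{lem: fix top row} says $\tau$ fixes the whole top row of $T$. In each of these situations the remaining moved elements of a column of $U$ are confined to at most two rows of $T$. So $\tau$ acts on each column of $U$ either trivially or as a single transposition between two entries in adjacent rows of $T$. For such a transposition, say $(b\ b')$, I would argue as in Case 2 of the proof of Proposition \ref{prop: nonzero}. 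The identity $\tau\xx_T^S=\xx_P^S$ forces $b$ and $b'$ to swap rows in $P$ relative to $T$. I would then combine this with Lemma \ref{lemma: pigeonhole} and the $T$-snaking condition to conclude that $b$ and $b'$ lie in the same column of $T$: if they lay in different columns, the columns of $T$ containing them would be forced to place two of their elements in the same row of $P$, or to push an element above its column height.

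The main obstacle is the remaining cases, where the equalities $\mu_1=\mu_2$ or $\lambda_2=\lambda_3$ block the fixing lemmas. I would handle these by induction on the columns of $U$ from left to right, mirroring the proof of Lemma \ref{lem: fix bottom row}. The first column of $U$ is $k,r,s$, where $k$ is the rightmost cocharge-$0$ element, $r$ is the rightmost element of the second row of $T$, and $s$ is the rightmost element of the top row of $T$. When $\mu_1=\mu_2=\lambda_1$ these three elements sit in the rightmost column of $T$, so any permutation of them is a column permutation of $T$. When they do not all sit in one column, the argument above shows that any element that is actually moved shares a column of $T$ with its image. After handling this column, I would delete it from both $U$ and $T$ and repeat; the inductive step is the same argument on the smaller tableaux. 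The delicate point is checking that the row-occupancy counting from Lemma \ref{lemma: pigeonhole} still applies after the deletion, since the deleted elements may come from different columns of $T$. I would resolve this by using the matrix rigidity of Lemma \ref{lem: perms}: it fixes how many elements of each cocharge land in each row of $P$, and that count should be enough to rule out cross-column moves.
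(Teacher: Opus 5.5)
There is a genuine gap in the step that carries the proof: showing that a pair swapped by $\tau$ lies in one column of $T$. Your snaking argument treats each transposition in isolation, but $\tau$ may contain several, and then the constraints you invoke no longer give a contradiction.

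Concretely, let $c\neq c'$ be height-$3$ columns of $T$ with entries $y_0,y_1,y_2$ and $z_0,z_1,z_2$ (bottom to top). Take $\tau=(y_1\ z_2)(z_1\ y_2)$, and let $P$ be obtained from $T$ by exchanging $y_1\leftrightarrow z_2$ and $z_1\leftrightarrow y_2$. Then:
\begin{itemize}
\item $P$ is a $T$-snaking, since each of the two columns still meets rows $0,1,2$ exactly once;
\item no entry sits above its column height;
\item $\tau\xx_T^S=\xx_P^S$;
\item the exponent-pair counts of Lemma~\ref{lem: perms} are the same as for the aligned swaps $(y_1\ y_2)(z_1\ z_2)$.
\end{itemize}
So your claim that cross-column entries ``would be forced to place two of their elements in the same row of $P$, or to push an element above its column height'' is not true in general. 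Neither Lemma~\ref{lemma: pigeonhole} nor the matrix rigidity can rule out cross-column moves, and the induction in your last paragraph rests on exactly this unproved claim. The only thing that excludes such a $\tau$ is the hypothesis $\tau\in\mathcal{C}(U)$ together with the actual column structure of $U$.

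That structure is what you already wrote down in your first paragraph, and it finishes the proof directly. Two observations do the work:
\begin{itemize}
\item If $\mu_1=\mu_2$, the bottom row of $U$ is the first $\mu_1$ entries of the bottom row of $T$ reversed, and the middle row of $U$ is the second row of $T$ (of length $\mu_2=\mu_1$) reversed. So the bottom and middle entries of each column of $U$ lie in a single column of $T$.
\item If $\lambda_2=\lambda_3$, Lemma~\ref{lem:three-row-cat} forces $i=0$ and $m=r$. So there are no cocharge-$1$ entries $a_j$ in the bottom row, and the middle and top rows of $U$ are those of $T$ reversed, with the same column pairs.
\end{itemize}
Now split into the paper's four cases.
\begin{itemize}
\item If $\mu_1\neq\mu_2$ and $\lambda_2\neq\lambda_3$: Lemmas~\ref{lem: fix bottom row}, \ref{lem: fix top row} and \ref{lem: fix height 1} fix the bottom and top rows of $U$, hence every column of $U$, so $\tau=\mathrm{id}$.
\item If exactly one equality holds: the corresponding fixing lemma freezes one row of $U$, and the remaining two rows are column-aligned with $T$.
\item If both hold: $T$ is the rectangle $(m,m,m)$, and the columns of $U$ are the columns of $T$ in reverse order.
\end{itemize}
No snaking argument and no induction over columns is needed.
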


\begin{proof}
    We consider three cases.
    
    If $\mu_1\neq \mu_2$ and the top two rows of $T$ differ in length, then $\tau$ must be the identity by Lemmas \ref{lem: fix bottom row} and \ref{lem: fix top row} and is certainly in $\mathcal{C}(T)$ (as demonstrated in \eqref{ex: case 1}.

    Otherwise, suppose $\mu_1=\mu_2$ but the top two rows of $T$ differ in length. An example of this is given by the following three tableaux:
    \[S=\ytableaushort{22,111,000},\hspace{1cm} T=\ytableaushort{78,456,123}, \hspace{1cm} U=\ytableaushort{87,654,321}.\]
    
    Then the top row of $U$ is still fixed by $\tau$, and so $\tau$ is a product of transpositions in the bottom rows of $U$.  Moreover, since $\mu_1=\mu_2$, we see that two elements in the bottom two rows of $U$ are in the same column if and only if they are in the same column of $T$, and so $\tau\in \mathcal{C}(T)$.

    Now suppose $\mu_1\neq \mu_2$ but the top two rows of $T$ have the same length.  Note that in this case there are no elements of cocharge $1$ in the bottom row of $T$. Then the bottom row of $U$ is fixed by $\tau$, and so $\tau$ is a product of transpositions in the top two rows of $U$.  Since $T,U$ have the same pairs of elements in each column in the top two rows in this case, we again have $\tau \in \mathcal{C}(T)$.

  \[S=\ytableaushort{22,11,0000}, \hspace{1cm} T=\ytableaushort{78,56,1234}, \hspace{1cm} U=\ytableaushort{87,65,4321}\]

    The last case to consider is the special case in which $m=\mu_1=\mu_2$ and the top two rows of $T$ are equal in length; this implies $S,T,U$ are rectangular all of shape $\mu = (m,m,m)$ and the columns of $U$ are simply the columns of $T$ written right to left.  Thus $\tau\in \mathcal{C}(T)$ as desired.
\end{proof}

\begin{corollary}\label{cor: same sign}
   For $\tau,P$ as in Prop \ref{prop: nonzero}, we have $(-1)^\tau = (-1)^P$.
\end{corollary}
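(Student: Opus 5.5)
We want $(-1)^\tau=(-1)^P$ for a pair $(\tau,P)$ with $\partial_P^S\tau(\xx_U)=\partial_T^S(\xx_U)$. The plan is to rewrite $P$ as a column permutation of $T$ composed with a row permutation that fixes the exponent data, and then compare the column part with $\tau$.

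First, by Proposition \ref{prop: tau in C(T)}, $\tau\in\mathcal{C}(T)$. So $\tau\cdot T$, obtained by permuting entries within columns of $T$, is a $T$-snaking whose snakes are exactly the columns of $T$. Its sign in the sense of Definition \ref{def: snaking-sign} is $\prod_i \sgn(\tau|_{T_i})=(-1)^\tau$. Moreover, its monomial is $\xx_{\tau\cdot T}^S=\tau(\xx_T^S)$, and by Proposition \ref{prop: nonzero} this equals $\xx_P^S$.

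Next, compare $P$ with $Q:=\tau\cdot T$, two $T$-snakings with the same monomial. By Lemma \ref{lem: same-monomial-rowcol}, $P$ is obtained from $Q$ by a row permutation fixing the filling $\cc(S)$, followed by a column rearrangement of snakes that preserves the row contents of the straightened $\cc(S)$. Rather than use the general theorem, I would exploit the rigidity from Lemmas \ref{lem: fix height 1}, \ref{lem: fix bottom row} and \ref{lem: fix top row}.

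In $\cc(S)$ each row is nearly constant: the bottom row is $0$s followed by a tail of $1$s, the middle row is all $1$s, and the top row is all $2$s. So two $T$-snakings with the same monomial can differ only by permuting, inside a single row, entries carrying the same cocharge value. I would show that every such permutation relating $Q$ to $P$ has trivial effect on the snake orders $\tau^{(i)}$. The reason is that each column's entries must occupy distinct rows, and the row in which each entry sits is forced by its exponent. The bottom row is handled by Lemma \ref{lem: fix height 1}, since the cocharge-$1$ tail elements stay in the tail boxes and are columns of height one. An entry with exponent $1$ lies in the middle row, except for the tail elements, which stay in the bottom row. Exponent $2$ forces the top row.

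Hence for each column of $T$, the bottom-to-top order of its entries in $P$ is determined by their exponents in $\xx_P^S$. That order is the same as in $Q$, because the exponents agree and, within a single column, entries in different rows have distinct exponents by Lemma \ref{lem: facts}(\ref{lem: cocharge 0}) and the structure of $\cc(S)$. Therefore $\tau^{(i)}(P)=\tau^{(i)}(Q)$ for all $i$, which gives $(-1)^P=(-1)^Q=(-1)^\tau$.

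The main obstacle is the exceptional exponent $1$ on bottom-row tail entries, which could collide with exponent $1$ in the middle row within one column of $T$. Since tail entries lie in height-one columns, no such collision occurs; this is the point I would check carefully.
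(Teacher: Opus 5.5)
Your proposal is correct and is essentially the paper's argument: from $\tau\in\mathcal{C}(T)$ and $\tau(\xx_T^S)=\xx_P^S$, the distinct exponents within each column of $T$ force the row of each entry in $P$. Here Lemma \ref{lem: fix height 1} (or simply the distinct-rows property of a $T$-snaking) rules out the cocharge-$1$ tail boxes, so the column orders of $P$ agree with $\tau$. The paper phrases this directly as $\tau_P=\tau$ column by column instead of passing through the auxiliary snaking $Q=\tau\cdot T$; also, the within-column distinctness of exponents comes from the structure of $\cc(S)$ in Lemma \ref{lem:three-row-cat}, since Lemma \ref{lem: facts}(\ref{lem: cocharge 0}) concerns columns of $U$, not of $T$.
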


\begin{proof}
From Proposition \ref{prop: tau in C(T)}, we know that $\tau\in \mathcal{C}(T)$ such that $\tau(\xx_{T}^S) = \xx_P^S.$ 
We will show that $\tau_P = \tau$. Since $\tau\in \mathcal{C}(T)$, we know $\tau$ fixes any column of height 1. Furthermore, we know $\tau_P$ must also fix columns of height 1 by Lemma \ref{lemma: pigeonhole}. 

Now, consider columns of height 2 with entries $a,b$. Note that $a,b$ have exponents $0,1$ in $\xx_T^S$. We see that $\tau \xx_T^S = \xx_P^S$ implies that $\tau$ swaps $a,b$ if and only if $a,b$ have exponents $1,0$ in $\xx_P^S$. The only way this is possible is if $\tau_p$ swaps $a,b$ as well.

Similarly, consider a column with entries $a,b,c$ in $T$. Note that $a,b,c$ must have distinct exponents $0,1,2$ in $\xx_T^S$ by Lemma \ref{lem:three-row-cat}. Thus whatever $\tau$ does to this column, it is detectable in $\tau x_T^S=x_P^S$.  By Lemma \ref{lem: fix height 1}, the element of cocharge $1$ must be in the second row in $P$ and not in the bottom row, so $\tau_P$ must permute the column in the same way.  Thus $\tau = \tau_p$. 
\end{proof}

Theorem \ref{thm: +coeff} now follows, completing the proof that $\BBB_\mu$ is a basis for $R_\mu$ when $\mu$ has three rows.

\bibliography{refs}
\bibliographystyle{amsalpha}

\end{document}